\newcommand{\bulk}{\text{\normalfont bulk}}
\newcommand{\cond}{\omega}
\newcommand{\mus }{\mu_{\rm{stat}}}
\newcommand{\1}{{\boldsymbol 1}}
\newtheorem{theorem}{Theorem}[section]
\newtheorem{lemma}[theorem]{Lemma}
\newtheorem{proposition}[theorem]{Proposition}
\newtheorem{corollary}[theorem]{Corollary}
\theoremstyle{definition}
\theoremstyle{remark}
\newtheorem{remark}[theorem]{Remark}
\numberwithin{equation}{section}
\def\N{{\mathbb N}}
\def\R{{\mathbb R}}
\newcommand{\dd}{\text{\normalfont d}}
\newcommand{\one}{{{\bf 1}}}
\newcommand{\E}{{\mathbb E}}
\renewcommand{\P}{{\mathbb P}}
\begin{document}
\title[NESS of open SEP]{Non-equilibrium steady state of the  symmetric exclusion process with reservoirs}%\\and\\  explicit $n$ point correlations for a class of symmetric processes} 
%   author one information

%author two information
\author{Simone Floreani}
\address{University of Oxford}
%\curraddr{Mathematical Institute}
\email{simone.floreani@maths.ox.ac.uk}
\curraddr{}
\thanks{}

%author three information

\author{Adrián González Casanova}
\address{Universidad Nacional Autónoma de México (UNAM) and University of California, Berkeley}
%\curraddr{Dipartimento di Matematica e Geoscienze}
\email{gonzalez.casanova@berkeley.edu}
\thanks{}

\begin{abstract}
Consider the open symmetric exclusion process on a connected graph with vertexes in $[N-1]:=\{1,\ldots, N-1\}$ where  points $1$ and $N-1$ are connected, respectively, to a left reservoir and a right reservoir with densities $\rho_L,\rho_R\in(0,1)$.  We prove that the non-equilibrium steady state of such system is 
$$\mus = \sum_{I\subset \mathcal P([N-1]) }F(I)\bigg(\otimes_{x\in I}\rm{Bernoulli}(\rho_R)\otimes_{y\in [N-1]\setminus I}\rm{Bernoulli}(\rho_L) \bigg).$$
In the formula above $ \mathcal P([N-1])$ denotes the power set of $[N-1]$ while
the numbers $F(I)> 0$ are  such that $\sum_{I\subset \mathcal P([N-1]) }F(I)=1$ and given in terms of absorption probabilities of the absorbing stochastic  dual process. Via probabilistic arguments  we compute explicitly the factors $F(I)$ when the graph is a homogeneous  segment.
\end{abstract}

\subjclass[2020]{60K35; 82C22; 60K37; 82C23}
\keywords{Non-equilibrium steady state; Correlations; Boundary driven systems; Duality; Exclusion process.}
\date{}
\dedicatory{}
\maketitle
%\tableofcontents
\section{introduction}
\subsection{Boundary driven systems}
In the context of non-equilibrium statistical physics, a lot of attention has been devoted to the study of stationary properties of \textit{open} particle systems evolving on a finite graph (see, e.g., \cite{bertini_macroscopic_2007} and \cite{Mallik_2015} for an overview on the subject). The word open refers to the fact that the dynamics does not conserve the total number of particles due to an interaction with the external world, typically modelled via particle reservoirs (see, e.g., \cite{derrida_exact_1993-1} and  \cite{carinci_duality_2013-1} ) or,  in heat transfer, via thermal baths (see, e.g., \cite{KMP} and \cite{gilbert_heat_2017}): such systems are thus referred to as \textit{boundary driven}. Reservoirs are mechanisms that inject and remove particles from the system, imposing a fix density of particles at a given site of the graph. %When more than one reservoir imposing different densities values are connected to the graph, the system is said to be out of equilibrium: such condition is characterized by the presence of a non zero particle current at stationarity and the stationary measure of the system is referred to as \textit{non-equilibrium steady state}.
When multiple reservoirs, each imposing different density values, are connected to the graph, the system is considered to be out of equilibrium. This condition is characterized by the presence of a non-zero particle current at stationarity, and the stationary measure of the system is commonly referred to as the \textit{non-equilibrium steady state}. While for many closed (as opposite of open) particle systems, the stationary, actually reversible, measures are explicit and in product form, the action of the reservoirs destroy reversibility and long range correlations can emerge in the non-equilibrium steady state as shown in the seminal paper \cite{sphon88}. 
Finding explicit stationary measures for open systems is a key problem in statistical physics, and it continues to generate substantial interest (see, e.g., the recent works \cite{FranceschiniFrassekGiardina}, \cite{FrassekGiardina} and \cite{FrassekSEP}). 

For some systems, the celebrated \textit{matrix product  ansatz method} has been developed in \cite{derrida_exact_1993-1} to obtain in explicit form such long range correlations. This method works, for instance,  for  the open simple symmetric exclusion process (SSEP) on a one dimensional segment with sites  $\{1, \ldots, N-1\}$ where points $\{1, N-1\}$ are connected, respectively, to a left and a right reservoir. This is a system of simple symmetric random walks subject to the exclusion rule: only one particle per site is allowed and thus attempted jumps to occupied sites are suppressed. Moreover at sites $\{1, N-1\}$ particles are destroyed and created at specific given rates. After \cite{derrida_exact_1993-1}, other matrix and algebraic methods have been proposed to compute explicitly the correlations of the open SSEP (see \cite{schutzbook}, \cite{liggett_stochastic_1999} and \cite{FrassekSEP}) and the research around this system is  extremely active (see, e.g., \cite{Salez2023, gantert2020mixing,goncalves_non-equilibrium_2019, bertini_macroscopic_2015, landim_stationary_2006,derrida_entropy_2007}).

However, %as many of these cite paper assert, 
a probabilistic representation of the non-equilibrium steady state  of the open SSEP complementing the explicit correlations computed via the matrix ansatz  is still not available in the literature and the matrix computations performed in e.g. \cite{derrida_exact_1993-1} lack a probabilistic interpretation.

 Moreover, matrix ansatz methods strongly rely on the fact that the underlying graph is a homogeneous segment and particles perform nearest neighbor jumps, while  there are several natural reasons  why one would like to overcome such limitation. First, many physical systems are not one dimensional and modelling the open SEP on a graph approximating a $d$-dimensional domain started to receive attention recently (see \cite{DelloSchiavoPortinaleSau}). Second, realistic models of particles should take care of the presence of spatial inhomogeneities (see, e.g., \cite{nandori_local_2016}) in the underlying media and these are often modelled with edge dependent weights. Third, extensive research has been conducted on the effects of symmetric long-range jumps in open exclusion processes, revealing interesting phenomena (see, e.g., \cite{bernardingonjimen}, \cite{bernardinjimen}, \cite{gonscotta} and  \cite{bernardin_frac}).%the effect of symmetric long range jumps in open exclusion processes have been extensively studied and interesting phenomena have been shown (see, e.g., \cite{bernardingonjimen}, \cite{bernardin_frac}).

The boundary driven symmetric exclusion process (SEP) on a general graph satisfies a property that turns out to be extremely useful: it is in stochastic duality relation with a dual particle system where particles evolve as in the primal model but the reservoirs are replaced by absorbing sites (see, e.g., \cite{carinci_duality_2013-1} and the recent work  \cite{schutz_2023_reverse}). Thus the dual system is conservative and if the graph is connected, all the particles   will be eventually absorbed. More precisely, this relation allow to compute the expected evolution of products of occupation variables of $n$ sites %model-dependent polynomials of order $n$ 
via the dual absorbing process with $n$ particles only.

In this paper we will use this relation to show that on a general graph with symmetric weights, the non-equilibrium steady state of the open SEP is a mixture measure of product of Bernoulli measures. Moreover we develop a probabilistic approach to derive the explicit formulas previously achieved by the matrix product ansatz and other algebraic methods.

\subsection{Boundary driven SEP and its stationary distribution}\label{section: results for SSEP}
Let $G$ be a finite connected graph with vertex set $[N-1]:=\{1,\ldots,N-1\}$ and edge set $E$. To each edge $\{x,y\}\in E$ we associate a symmetric weight $\omega_{x,y}\in (0,\infty)$ called conductance. We thus identify $G$ with the triple $([N-1], E, (\omega_{x,y})_{\{x,y\}\in E})$. The boundary driven SEP on $G$  with reservoirs parameters $\rho_L, \rho_R\in (0,1)$ and  $\omega_L, \omega_R>0$ is the Markov process $(\eta_t)_{t\ge 0}$ with state space $\mathcal X=\{0,1\}^{[N-1]}$ and infinitesimal generator
\begin{align}\label{eq: generator open SSEP}
{\mathcal L}  = {\mathcal L}+\cond_L\, {\mathcal L}_L+\cond_R\,  {\mathcal L}_R
\end{align}
 where, for all bounded functions $f : {\mathcal X} \to \R$,
\begin{align*}
{\mathcal L}^\bulk f(\eta) =\ \sum_{\{x,y\}\in E} \cond_{x,y} \left\{\begin{array}{c}
		\eta(x)\, (1- \eta(y)) \left(f(\eta^{x,y})-f(\eta)\right)\\[.15cm]
		+\,\eta(y)\, (1- \eta(x)) \left(f(\eta^{y,x})-f(\eta)\right)
	\end{array} \right\}\ ,
\end{align*}
\begin{align*}
	\nonumber
	\mathcal L_L f(\eta) =&\  \eta(1)\, (1- \rho_L)\,	 (f(\eta^{1,-})-f(\eta))\\
	+&\ \rho_L\, (1- \eta(1))\, (f(\eta^{1,+})-f(\eta))	
\end{align*}
and
\begin{align*}
	\nonumber
	\mathcal L_R f(\eta) =&\  \eta(N-1)\, (1- \rho_R) \left(f(\eta^{N-1,-})-f(\eta)\right)\\
	+&\  \rho_R \left(1- \eta(N-1))\, (f(\eta^{N-1,+})-f(\eta)\right) \ .
\end{align*}

\begin{figure}
	\centering
	\includegraphics[width=0.8\linewidth]{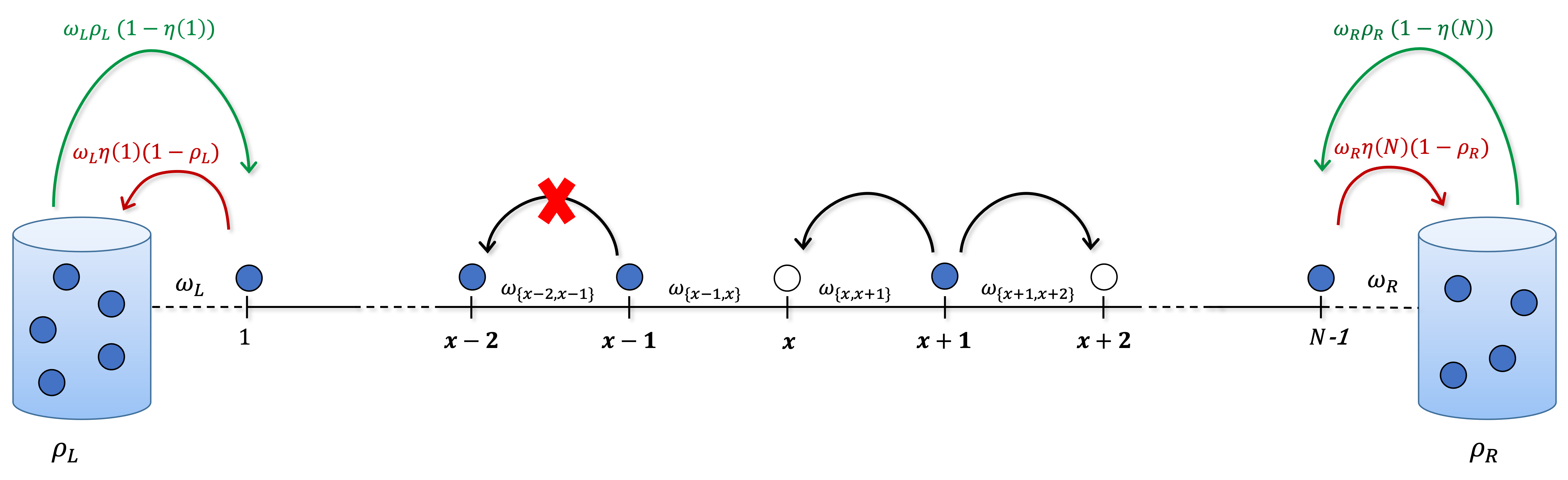}
	\caption{Schematic description of the dynamics.}
	\label{fig:bdssep}
\end{figure}

Above $\eta^{x,y}$  is  the configuration in which a particle (if any) has been removed from $x \in [N-1]$ and moved at $y \in [N-1]$, while $\eta^{x,-} \in \mathcal X$ is the configuration obtained from $\eta$ by destroying a particle (if present) from  site $x \in \{1,N-1\}$ and  $\eta^{x,+} \in \mathcal X$  is the configuration obtained from $\eta$ by creating a particle (if not already present) at  site $x \in \{1,N-1\}$. In the above dynamics, the action of the reservoirs corresponds to the  $\cond_L\, {\mathcal L}_L+\cond_R\,  {\mathcal L}_R$ part of the generator. $\rho_L$  is the particle density imposed by the left  reservoir and  $\rho_R$ the one imposed by the right reservoir. $\omega_L$ is the conductance connecting the site $1$ to the fictitious point $0$ representing the  left  reservoir and $\omega_R$ is the conductance connecting the site $N-1$ to the fictitious point  $N$  representing the  right reservoirs.

It is well known that $(\eta_t)_{t\ge 0}$ is in stochastic duality relation with the Markovian interacting particle system  $(\xi_t)_{t\ge 0}$ with state space $\mathcal X \times \N_0^{\{0, N\}}$ and which evolves as the SEP on $G$ but the reservoirs are now replaced by absorbing sites $\{0,N\}$. More precisely, in the dual system a particle at site $1$ can be absorbed at site $0$  at rate $\omega_L$  and a particle at site $N-1$ can be absorbed at site $N$ at rate $\omega_R$. We denote by $\hat \P_\xi$ the law of $(\xi_t)_{t\ge 0}$  starting from the configuration $\xi$ which can be identified with  a subset $I$ of vertexes in $[N-1]\cup \{0,N\}$ via the relation $\xi(x)=\boldsymbol 1_{\{x\in I\}}$. Thus we write either $\hat \P_\xi$ or $\hat \P_I$. We refer the reader to Section \ref{section: dual boundary driven SSEP} below for the precise definition of    $(\xi_t)_{t\ge 0}$ and for the duality relation satisfied by the two processes.

Our first main contribution is the following theorem. We denote by  $\mathcal P([N-1])$ the power set of $[N-1]$ and given $I\in \mathcal P([N-1])$, $|I|$ denotes its cardinality.

\begin{theorem}\label{theorem: stationary measure SSEP}
%There exists positive weights  $(F(I))_{I\subset \mathcal P([N-1])}$ which depends only on the conductances $(\omega_{x,y})_{\{x,y\}\in E}$, $\omega_{0,1}$ and $\omega_{N-1,N}$ such that 
The stationary distribution of the boundary driven SEP on $G$ with reservoirs parameters $\rho_L, \rho_R, \omega_L, \omega_R>0$ is 
\begin{align}\label{equation: stationary measure open SSEP}
\mus = \sum_{I\subset \mathcal P([N-1]) }F(I)\bigg(\otimes_{x\in I}\rm{Bernoulli}(\rho_R)\otimes_{y\in [N-1]\setminus I}\rm{Bernoulli}(\rho_L) \bigg),
\end{align}
where 
\begin{align}\label{equation: terms F(I) SSEP}
F(I):=\sum_{J\subset \mathcal P([N-1]): \ I\subset J } (-1)^{|J\setminus I|}\hat \P_J(\xi_\infty(N)=|J|)>0
\end{align}
satisfies 
$$ \sum_{I\subset \mathcal P([N-1]) }F(I)=1.$$
\end{theorem}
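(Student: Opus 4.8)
The plan is to identify $\mus$ through its correlation functions $A\mapsto \E_{\mus}\big[\prod_{x\in A}\eta(x)\big]$, $A\subseteq[N-1]$. Since $\mathcal X$ is finite these numbers determine $\mus$ (one recovers each $\mus(\{\sigma\})$ from them by inclusion--exclusion), so it suffices to compute them via duality and to check that they coincide with the correlation functions of the measure on the right--hand side of \eqref{equation: stationary measure open SSEP}. Note first that, $G$ being connected and $\rho_L,\rho_R\in(0,1)$, the process $(\eta_t)_{t\ge0}$ is an irreducible continuous--time Markov chain on the finite set $\mathcal X$, so $\mus$ exists and is unique.

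\emph{Step 1 (stationary correlations via duality).} Use the self--duality recalled in Section~\ref{section: dual boundary driven SSEP}, with the duality function that attaches the weights $\rho_L,\rho_R$ to dual particles at the absorbing sites $0,N$, namely $D(\xi,\eta)=\rho_L^{\xi(0)}\rho_R^{\xi(N)}\prod_{x\in[N-1]:\ \xi(x)=1}\eta(x)$. Integrating the duality identity $\E_\eta[D(I,\eta_t)]=\hat\E_I[D(\xi_t,\eta)]$ against $\mus$ and using stationarity of $\mus$ gives, for all $I\subseteq[N-1]$ and $t\ge0$,
\[
\E_{\mus}\Big[\prod_{x\in I}\eta(x)\Big]=\hat\E_I\big[\,\E_{\mus}[\,D(\xi_t,\eta)\mid\xi_t\,]\,\big].
\]
Because $G$ is connected, the conservative dual process reaches $\{0,N\}$ in finite time: $\xi_t\to\xi_\infty$ with $\xi_\infty$ supported on $\{0,N\}$, hence $D(\xi_t,\eta)\to\rho_L^{\xi_\infty(0)}\rho_R^{\xi_\infty(N)}$, which does not depend on $\eta$. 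Letting $t\to\infty$ (bounded convergence) and using $\xi_\infty(0)=|I|-\xi_\infty(N)$,
\[
\E_{\mus}\Big[\prod_{x\in I}\eta(x)\Big]=\sum_{j=0}^{|I|}\rho_R^{\,j}\,\rho_L^{\,|I|-j}\,\hat\P_I\big(\xi_\infty(N)=j\big).
\]

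\emph{Step 2 (a random set $\cR$ with $F(I)=\P(\cR=I)$).} Construct the dual process from a stirring/graphical representation: independent Poisson clocks of rate $\omega_{x,y}$ on each edge $\{x,y\}$ that interchange the contents of $x$ and $y$, together with clocks of rate $\omega_L$ at site $1$ and $\omega_R$ at site $N-1$ that move the particle present there, if any, to the respective absorbing site. In this construction the trajectory followed by each token, and in particular the absorbing site at which it is eventually trapped, depends on the Poisson clocks \emph{alone}, not on which other tokens are present. Thus there is a well--defined random map $\mathrm{dest}\colon[N-1]\to\{0,N\}$, and setting $\cR:=\{x\in[N-1]:\mathrm{dest}(x)=N\}$ the induced coupling of the dual started from different sets gives $\xi^I_\infty(N)=|\cR\cap I|$ a.s.\ for every $I\subseteq[N-1]$. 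In particular $\hat\P_J(\xi_\infty(N)=|J|)=\P(J\subseteq\cR)$, so, inserting this into \eqref{equation: terms F(I) SSEP} and using the elementary identity $\sum_{J\supseteq I}(-1)^{|J\setminus I|}\P(J\subseteq\cR)=\P(\cR=I)$,
\[
F(I)=\P(\cR=I)\qquad\text{for every }I\subseteq[N-1].
\]
This immediately gives $F(I)\ge0$ and $\sum_{I\subseteq[N-1]}F(I)=1$; the strict positivity $F(I)>0$ follows because $G$ is connected, so any prescribed value of $\mathrm{dest}$ is realised by a positive--probability set of clock configurations (route, one at a time, the tokens bound for $N$ to site $N-1$ and those bound for $0$ to site $1$, absorbing each before any competing clock rings).

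\emph{Step 3 (matching correlations and conclusion).} Writing $\mu$ for the right--hand side of \eqref{equation: stationary measure open SSEP}, Step~2 gives $\mu=\sum_{I}\P(\cR=I)\big(\otimes_{x\in I}\mathrm{Bernoulli}(\rho_R)\otimes_{y\in[N-1]\setminus I}\mathrm{Bernoulli}(\rho_L)\big)$, so for any $A\subseteq[N-1]$,
\[
\E_\mu\Big[\prod_{x\in A}\eta(x)\Big]=\E\Big[\prod_{x\in A}\big(\rho_R\1_{\{x\in\cR\}}+\rho_L\1_{\{x\notin\cR\}}\big)\Big]=\sum_{B\subseteq A}\rho_R^{|B|}\rho_L^{|A\setminus B|}\,\P(\cR\cap A=B).
\]
On the other hand, in the Step~1 formula $\hat\P_A(\xi_\infty(N)=j)=\P(|\cR\cap A|=j)=\sum_{B\subseteq A,\ |B|=j}\P(\cR\cap A=B)$, so that formula equals the same sum over $B\subseteq A$. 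Hence $\mu$ and $\mus$ have identical correlation functions and $\mu=\mus$. The one delicate point is the clock--measurability claim in Step~2: one must set up the graphical construction so that the absorption moves, and not merely the swaps, are driven by the Poisson clocks, and then verify that a single token's fate is unchanged by adding further tokens; this is exactly what turns the formal M\"obius sum \eqref{equation: terms F(I) SSEP} into a genuine probability, thereby delivering positivity and normalisation at no extra cost, while Steps~1 and~3 are routine duality and inclusion--exclusion bookkeeping.
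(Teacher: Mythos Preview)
Your proof is correct and follows essentially the same route as the paper's own argument. The paper first proves an intermediate representation $\mus=\E^{stirr}_{[N-1]}\big[\otimes_{x\in[N-1]}\mathrm{Ber}(U_x)\big]$ with $U_x\in\{\rho_L,\rho_R\}$ determined by the destination $X^x_\infty$ in the labelled stirring construction (Theorem~\ref{Theorem: random bernulli}), and then deduces Theorem~\ref{theorem: stationary measure SSEP} by the same M\"obius/inclusion--exclusion computation you perform; your random set $\cR=\{x:\mathrm{dest}(x)=N\}$ is exactly the paper's $\{x:U_x=\rho_R\}$, and the consistency property you invoke in Step~2 is precisely the paper's equation~\eqref{eq: consistency dual SSEP}.
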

\begin{remark}[Probabilistic interpretation of $F(I)$]
	As it will be clear from Theorem \ref{Theorem: random bernulli} below, each factor $F(I)$ is the probability that all the particles initially at $I$ are absorbed at $N$, while the remaining ones at $0$, in the dual system of the boundary driven SEP, built via the labelled stirring construction and started from particles in each site of the bulk $[N-1]$.
\end{remark}

\begin{remark}
From \eqref{equation: stationary measure open SSEP} one can deduce that for all $\eta\in\mathcal X$,
$$\mus(\eta)=\sum_{\ell=0}^{|\eta|}\sum_{k=0}^{N-1-|\eta|}\rho_R^\ell\rho_L^{|\eta|-\ell}(1-\rho_R)^{k}(1-\rho_L)^{N-1-|\eta|-k}\left(\sum_{I\in C(\ell,k)}F(I) \right)$$
where $C(\ell,k)=\{I\in \mathcal P([N-1]): |I|=k+\ell \ \rm{and} \ |I\cap \eta|=\ell\}$.
\end{remark}
\subsection{Explicit formulas for boundary driven SEP}
Providing explicitly the factors $F(I)$ amounts to compute the absorption probabilities in the dual system: via a coupling technique we compute  such probabilities  when the graph is a homogeneous one dimensional segment.

 \begin{theorem}\label{theorem: absorbtion probability SSEP conductances}
If \ $\omega_{x,y}=\one_{ \{|x-y|=1\}}$ for each $x,y\in[N-1]$ and  $\omega_L=\omega_R=1$, then, given $1\le x_1<\ldots <x_n\le N-1$,
 \begin{align}\label{eq: absorbtion probabilities SSEP conductance}
 	\hat \P_{\{x_1,\ldots, x_n\}}(\xi_\infty(N)=n) =\prod_{i=1}^n \frac{x_i-(i-1)}{N-(i-1)}.
 \end{align}
\end{theorem}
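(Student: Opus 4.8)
The plan is to compute the absorption probabilities for the dual absorbing process on the homogeneous segment $\{0,1,\dots,N\}$ with $n$ labelled particles, using the "stirring"/coupling construction together with a symmetrization trick. The key observation is that in the symmetric exclusion dual, the $n$ particles perform interacting nearest-neighbour random walks on $\{0,\dots,N\}$ with $0$ and $N$ absorbing, but — because of the exclusion rule plus the symmetry of the dynamics — one can pass to a description in terms of \emph{ordered} particles. Writing the configuration as $0\le X_1(t)\le X_2(t)\le\dots\le X_n(t)\le N$ (the order is preserved by the nearest-neighbour stirring dynamics), the vector $(X_1,\dots,X_n)$ evolves in the Weyl chamber, and the gaps $Y_i := X_i - X_{i-1}$ (with $X_0\equiv 0$) are what one wants to track. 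Since $\hat\P_{\{x_1,\dots,x_n\}}(\xi_\infty(N)=n)$ is the probability that \emph{all} $n$ particles exit at $N$, I would condition successively from the rightmost particle inward.

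\textbf{Main steps.} First I would establish the base case $n=1$: a single simple random walk on $\{0,\dots,N\}$ with absorbing endpoints, started at $x$, is absorbed at $N$ with probability $x/N$ (gambler's ruin / the harmonic function $h(x)=x$). Second, for general $n$, I would argue that the event $\{\xi_\infty(N)=n\}$ factorizes into a product over the particles ordered from right to left. The right way to see this: consider the rightmost particle $X_n$. Until the second-rightmost particle $X_{n-1}$ is absorbed or the configuration changes relevantly, $X_n$ feels the other particles only through the exclusion constraint $X_n\ge X_{n-1}+1$, i.e. it is a random walk on $\{X_{n-1}+1,\dots,N\}$; but the clean statement comes from a \emph{change of variables}: set $Z_i = X_i - (i-1)$ for $i=1,\dots,n$, so that $1\le Z_1 < Z_2 < \dots$ becomes, after removing the exclusion shift, $n$ particles that may coincide — and in fact the map "erase the rightmost particle and relabel" reduces an $n$-particle problem to an $(n-1)$-particle problem on a segment of length $N-1$. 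Concretely I expect to show, via the stirring coupling, that
\begin{align*}
\hat\P_{\{x_1,\dots,x_n\}}(\xi_\infty(N)=n) = \frac{x_n-(n-1)}{N-(n-1)}\cdot \hat\P_{\{x_1,\dots,x_{n-1}\}}^{(N-1)}(\xi_\infty(N-1)=n-1),
\end{align*}
where the superscript denotes the segment $\{0,\dots,N-1\}$; iterating this recursion and using the base case yields \eqref{eq: absorbtion probabilities SSEP conductance}. Third, I would verify the recursion is consistent with the ordering hypothesis $x_1<\dots<x_n$ (all factors lie in $(0,1]$, and the induction hypothesis applies since $x_1<\dots<x_{n-1}\le N-1$, with strict inequalities preserved).

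\textbf{Why the recursion holds — the coupling argument.} This is the technical heart. I would use the labelled stirring (Harris) construction: place independent Poisson clocks on each edge $\{i,i+1\}$ of $\{0,\dots,N\}$, and when a clock rings the contents of the two endpoints are swapped; a particle at $1$ (resp. $N-1$) is additionally absorbed at $0$ (resp. $N$) at rate $1$. Under this construction the \emph{trajectories} of the ordered particles are determined, and the rightmost particle's trajectory, viewed on its own, is exactly that of a single random walk on $\{0,\dots,N\}$ with absorbing boundaries \emph{once we quotient out the presence of the others} — the point being that a stirring swap between the rightmost particle and the site to its left is indistinguishable, for the purpose of the rightmost trajectory, from a free random walk step, except that the left-site may be occupied by particle $n-1$, in which case the "swap" simply exchanges the labels $n-1,n$ and does not move the rightmost \emph{position}. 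The standard way to make this rigorous is to observe that the unlabelled process is the same, and then to couple the $n$-particle segment-$N$ dynamics with an $(n-1)$-particle segment-$(N-1)$ dynamics by "deleting the rightmost particle and its occupied site", which is the combinatorial content of the shift $Z_i=X_i-(i-1)$.

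\textbf{The main obstacle} I anticipate is making the "delete the rightmost particle" coupling fully precise: one must check that the absorption of $X_n$ at $N$ is independent (in the appropriate conditional sense, after the time-change implicit in the deletion) of the subsequent fate of the remaining $n-1$ particles, and that after deletion the remaining particles genuinely see a homogeneous segment of length $N-1$ with the correct boundary rates. An alternative, perhaps cleaner route that avoids delicate path couplings is purely analytic: verify directly that the right-hand side of \eqref{eq: absorbtion probabilities SSEP conductance}, as a function of $(x_1,\dots,x_n)$ on the discrete Weyl chamber, is harmonic for the generator of the $n$-particle absorbing dual (i.e. annihilated by $\hat{\mathcal L}$ in the bulk) and satisfies the correct boundary conditions (value $1$ when all $x_i$ are "at $N$" in the appropriate limiting sense, value $0$ when $x_1=0$), so that by uniqueness of the solution to this Dirichlet problem on a finite state space it must equal the absorption probability. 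Checking harmonicity amounts to a finite computation: applying the nearest-neighbour exclusion generator to the product $\prod_i \frac{x_i-(i-1)}{N-(i-1)}$ and seeing the telescoping cancellation, where the exclusion constraint $x_{i+1}=x_i+1$ exactly kills the otherwise-problematic terms because the factor $x_{i+1}-i = x_i-(i-1)$ coincides. I would present the analytic verification as the main proof and mention the coupling picture as the intuition.
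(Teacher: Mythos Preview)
Your proposed main proof --- verifying directly that $g(x_1,\ldots,x_n):=\prod_{i=1}^n\frac{x_i-(i-1)}{N-(i-1)}$ is harmonic for the $n$-particle absorbing generator $\hat{\mathcal L}$ and matches the boundary data --- is correct, and it is exactly one of the two proofs the paper gives (Lemma~\ref{lemma: abs prob conductances at boundary} in Section~\ref{subsection: induction}, specialized to $\omega_L=\omega_R=1$). Your mechanism for the cancellation is also the right one: when $x_{i+1}=x_i+1$, the blocked right-jump of particle $i$ and the blocked left-jump of particle $i+1$ contribute increments proportional to $\prod_{j\neq i}(x_j-(j-1))$ and $-\prod_{j\neq i+1}(x_j-(j-1))$ respectively, and these coincide precisely because $x_{i+1}-i=x_i-(i-1)$. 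The paper organizes this as an induction on the number of particles rather than treating all neighbouring pairs at once, but the content is the same.

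Your coupling heuristic, however, contains a genuine error. The recursion you wrote,
\[
\hat\P^{[N]_0}_{\{x_1,\dots,x_n\}}(\xi_\infty(N)=n)=\frac{x_n-(n-1)}{N-(n-1)}\,\hat\P^{[N-1]_0}_{\{x_1,\dots,x_{n-1}\}}(\xi_\infty(N-1)=n-1),
\]
is wrong: iterating it gives every denominator equal to $N-n+1$, not the required $N,N-1,\ldots,N-n+1$. The correct recursion, which the paper proves in Section~\ref{section: Ninja method} via a second-class-type ``Ninja'' particle, has prefactor $\frac{x_n-(n-1)}{N}$ (Theorem~\ref{theorem: recursion absorption probabilities via Ninja}). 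The coupling is more delicate than ``delete the rightmost particle and its site'': one introduces a distinguished $(n{+}1)$-th particle whose interaction rules are designed so that (i) forgetting labels recovers the $(n{+}1)$-particle SEP on $[N]_0$, while (ii) projecting out the Ninja's site recovers the $n$-particle SEP on $[N-1]_0$; the factor $\frac{x_{n+1}-n}{N}$ then arises as the conditional probability that the Ninja is absorbed at $N$ given that the other $n$ particles are, computed via a martingale for the conditioned process. Your ``erase and shift by $i-1$'' picture does not produce a consistent Markovian reduction, which is why the factor came out wrong; you were right to flag this as the main obstacle and fall back on the harmonicity argument.
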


In several works (see, e.g., \cite{gantert2020mixing, goncalves_non-equilibrium_2019}) the following  choice of parameters is taken
\begin{align*}
	\begin{split}
		\rho_L =\frac{\alpha}{\alpha + \gamma} \\
	\rho_L =\frac{\delta}{\delta+ \beta} 
	\end{split}
	\begin{split}
		\omega_L =&\frac{1}{\alpha + \gamma}\\
\omega_R =&\frac{1}{\delta + \beta}
	\end{split}
	\end{align*}
obtaining the $\alpha, \beta, \gamma, \delta$ model where particles are created at rate $\alpha$ on the left and at rate $\delta$ on the right while particles are destroyed at rate $\gamma$ on the left and $\beta$ on the right. In the setting of Theorem \ref{theorem: absorbtion probability SSEP conductances} but with general values of $\omega_L$ and $\omega_R$ as in the $\alpha, \beta, \gamma, \delta$ model we have that (see Lemma \ref{lemma: abs prob conductances at boundary} below), given $x_1<\ldots <x_n$,
\begin{align}\label{eq: absorbtion probabilities SSEP conductance at reservoirs}
	\hat \P_{\{x_1,\ldots, x_n\}}(\xi_\infty(N)=n) :=\prod_{i=1}^n \frac{\frac{1}{\omega_L}+x_i-i}{\frac{1}{\omega_L}+\frac{1}{\omega_R}+N-1-i}.
\end{align}
However we emphasize as $\rho_L$ and $\rho_R$ do not enter in the terms $F(I)$ defined in \eqref{equation: terms F(I) SSEP}.

Having obtained the probabilities that all the particles in the initial configuration are absorbed at $N$, it is then possible to recover all the absorption probabilities via the following relation.

\begin{proposition}\label{proposition: general absorption prob}
Given $1\le x_1<\ldots <x_n\le N-1$ and $\ell\le n$

$$\hat \P_{\{x_1,\ldots, x_n\}}(\xi_\infty(N)=\ell)  =\sum_{k=\ell}^n (-1)^{k-\ell} {{k} \choose{\ell}}\sum_{1\le i_1<\ldots<i_k\le n}\hat \P_{\{x_{i_1},\ldots, x_{i_k}\}}(\xi_\infty(N)=k). $$
\end{proposition}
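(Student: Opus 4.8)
The plan is to view the statement as an instance of inclusion–exclusion applied to the random set of particles that end up absorbed at $N$. Fix the initial configuration $\xi$ supported on $\{x_1,\ldots,x_n\}$ and recall that in the dual system the total number of particles is conserved and every particle is eventually absorbed at $0$ or $N$. Using the labelled stirring (graphical) construction, each of the $n$ particles can be tracked individually, so there is a well-defined random subset $S \subseteq \{x_1,\ldots,x_n\}$ consisting of those initial sites whose particle is absorbed at $N$; then $\xi_\infty(N) = |S|$. The key observation is that for any $A \subseteq \{x_1,\ldots,x_n\}$ the event $\{A \subseteq S\}$ — that all particles started in $A$ are absorbed at $N$ — has probability $\hat\P_{A}(\xi_\infty(N) = |A|)$: by the stirring construction, whether a given particle reaches $0$ or $N$ is unaffected by the presence or absence of the other particles (the exclusion interaction only permutes labels along edges and does not change which particle sits where), so the marginal behaviour of the particles started in $A$ is exactly that of the dual process started from $A$ alone. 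Hence $\sum_{|A|=k}\hat\P_{\{x_{i_1},\ldots,x_{i_k}\}}(\xi_\infty(N)=k)$ is precisely the $k$-th binomial moment $\E\big[\binom{|S|}{k}\big]$ of $|S|$.

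First I would make the stirring/label-independence statement precise: set up the graphical representation from Section \ref{section: dual boundary driven SSEP}, note that each Poisson edge-clock swaps the contents of its two endpoints and each boundary clock at $1$ (resp.\ $N-1$) moves a particle, if present, to $0$ (resp.\ $N$), and observe that this defines deterministic trajectories for labelled particles that depend only on the clocks, not on which other sites are initially occupied. This gives the consistency relation $\hat\P_{B}(A \subseteq S_B) = \hat\P_{A}(\xi_\infty(N)=|A|)$ for $A \subseteq B$, where $S_B$ is the absorbed-at-$N$ set when started from $B$. Second, with $B = \{x_1,\ldots,x_n\}$ fixed, I would compute the binomial moments: for $0 \le k \le n$,
\begin{align*}
\E\Big[\binom{|S|}{k}\Big] = \sum_{\substack{A \subseteq B \\ |A|=k}} \Pr(A \subseteq S) = \sum_{1\le i_1<\cdots<i_k\le n}\hat\P_{\{x_{i_1},\ldots,x_{i_k}\}}(\xi_\infty(N)=k).
\end{align*}
Third, I would invert the relation between binomial moments and the point probabilities $\hat\P_{B}(\xi_\infty(N)=\ell) = \Pr(|S|=\ell)$. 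The standard binomial-moment inversion formula states that for an integer-valued random variable $Y \in \{0,\ldots,n\}$ one has $\Pr(Y=\ell) = \sum_{k=\ell}^{n}(-1)^{k-\ell}\binom{k}{\ell}\E\big[\binom{Y}{k}\big]$; this follows from substituting $\binom{Y}{k}=\sum_{m\ge k}\binom{m}{k}\Pr(Y=m)$ into the right-hand side and using the identity $\sum_{k=\ell}^{m}(-1)^{k-\ell}\binom{k}{\ell}\binom{m}{k} = \delta_{\ell,m}$ (an alternating-sum identity proved by a short generating-function or Vandermonde computation). Applying this with $Y = |S|$ and the moment expression from the previous step yields exactly the claimed formula.

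The main obstacle is the first step — justifying that the fate (absorbed at $0$ versus $N$) of the particle started at $x_i$ in the multi-particle dual system agrees, as a random variable on the common stirring probability space, with its fate in the single-particle system. Once the graphical construction is in place this is essentially the classical fact that stirring/exclusion dynamics is a deterministic function of a label-free Poisson clock configuration acting by transpositions, so that any sub-collection of labelled particles evolves marginally as if alone; but it does require being careful that the boundary absorption clocks act consistently (a clock firing at the boundary removes whichever particle is there, and "no particle present" is a consistent outcome across the coupled systems). Everything after that is the purely combinatorial binomial-moment inversion, which is routine. I would therefore state the label-independence as a short lemma (or cite the stirring construction already introduced for Theorem \ref{Theorem: random bernulli}) and then present the inversion in two or three lines.
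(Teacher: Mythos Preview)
Your proposal is correct and follows essentially the same route as the paper: both arguments rest on the stirring consistency property \eqref{eq: consistency dual SSEP}--\eqref{eq: equality in law of stirring and configuration dual SSEP} to identify $\Pr(A\subseteq S)$ with $\hat\P_A(\xi_\infty(N)=|A|)$, and then perform inclusion--exclusion. The paper writes the indicator $\mathbf{1}_{\{X^x_\infty=0\}}$ as $1-\mathbf{1}_{\{X^x_\infty=N\}}$, expands the product, and exchanges the order of summation by hand, whereas you recognise and name this computation as binomial-moment inversion; the two are the same argument in different packaging.
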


As a direct consequence of Theorem \ref{theorem: absorbtion probability SSEP conductances} and Proposition \ref{proposition: general absorption prob} we obtain the non-equilibrium $n$-point centered and non-centered correlations of the boundary driven SEP on the homogeneous segment.
\begin{proposition}[$n$-point non-equilibrium correlations]\label{proposition: n point correlation SSEP}
If \ $\omega_{x,y}=\one_{ \{|x-y|=1\}}$ for each $x,y\in[N-1]$ and  $\omega_L=\omega_R=1$, then given $n\in\N$ and $1\le x_1<\ldots<x_n\le N-1$, 
\begin{align}\label{eq: centered n points correlations}
\E_{\mus}\left[  \prod_{i=1}^n \left(  \eta(x_i)- \E[\eta(x_i)] \right)    \right]
=(\rho_R-\rho_L)^n \psi(x_1,\ldots,x_n)
\end{align}
where $\psi(x_1,\ldots,x_n)$ is given by
\begin{align}\label{eq: psi}
\sum_{j=0}^n(-1)^j\sum_{1\le i_1<\ldots <i_j\le n}  \   \prod_{\ell=1}^{j}\ \frac{x_{i_\ell}-(\ell-1)}{N-(\ell-1)} \prod_{r\in[n]\setminus\{i_1,\ldots,i_j\}}\frac{x_r}{N}
\end{align}
and 
\begin{align}\label{eq: non centered correlations}
	\E_{\mus}\left[  \prod_{i=1}^n  \eta(x_i) \right]
	=\sum_{j=0}^n\rho_L^{n-j}(\rho_R-\rho_L)^j \ \sum_{1\le i_1<\ldots <i_j\le n}  \   \prod_{\ell=1}^{j}\ \frac{x_{i_\ell}-(\ell-1)}{N-(\ell-1)}.
\end{align}
\end{proposition}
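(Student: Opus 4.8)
\emph{Proof strategy.}\quad The plan is to read off both identities directly from the mixture representation of Theorem~\ref{theorem: stationary measure SSEP}. Let $I$ be a random subset of $[N-1]$ with $\hat\P(I=J)=F(J)$ for each $J\in\mathcal P([N-1])$ (a genuine probability law by Theorem~\ref{theorem: stationary measure SSEP}); then \eqref{equation: stationary measure open SSEP} says that under $\mus$ the occupation variables $(\eta(x))_{x\in[N-1]}$ are, conditionally on $I$, independent, with $\eta(x)\sim\mathrm{Bernoulli}(\rho_R)$ for $x\in I$ and $\eta(x)\sim\mathrm{Bernoulli}(\rho_L)$ for $x\notin I$; equivalently $\E_{\mus}[\eta(x)\mid I]=\rho_L+(\rho_R-\rho_L)\IND_{x\in I}$. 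The additional input I need about $F$ is the inclusion--exclusion identity: for every $A\subseteq[N-1]$,
\begin{equation}\label{eq: plan Mobius}
\hat\P(A\subseteq I)=\sum_{J\colon A\subseteq J}F(J)=\hat\P_A\bigl(\xi_\infty(N)=|A|\bigr),
\end{equation}
where the last equality is the (M\"obius) inversion, on the subset lattice of $[N-1]$, of the defining formula \eqref{equation: terms F(I) SSEP}; conceptually it is the consistency of the labelled stirring construction behind Theorem~\ref{Theorem: random bernulli}. Feeding Theorem~\ref{theorem: absorbtion probability SSEP conductances} into \eqref{eq: plan Mobius}, for $1\le x_1<\ldots<x_n\le N-1$ and $1\le i_1<\ldots<i_k\le n$ (so that $x_{i_1}<\ldots<x_{i_k}$) one gets $\hat\P(\{x_{i_1},\ldots,x_{i_k}\}\subseteq I)=\prod_{\ell=1}^{k}\frac{x_{i_\ell}-(\ell-1)}{N-(\ell-1)}$.

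For \eqref{eq: non centered correlations} I would condition on $I$ and use conditional independence: $\E_{\mus}[\prod_{i=1}^{n}\eta(x_i)\mid I]=\prod_{i=1}^{n}(\rho_L+(\rho_R-\rho_L)\IND_{x_i\in I})$. Expanding this product over the subsets $T\subseteq[n]$ of indices at which the factor $(\rho_R-\rho_L)\IND_{x_i\in I}$ is selected gives $\sum_{T\subseteq[n]}\rho_L^{\,n-|T|}(\rho_R-\rho_L)^{|T|}\prod_{i\in T}\IND_{x_i\in I}$; taking $\hat\E$, inserting \eqref{eq: plan Mobius} together with Theorem~\ref{theorem: absorbtion probability SSEP conductances}, and regrouping the $T$'s by cardinality $j=|T|$ gives exactly \eqref{eq: non centered correlations}. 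Equivalently, as announced just before the statement, \eqref{eq: non centered correlations} follows from the duality identity $\E_{\mus}[\prod_i\eta(x_i)]=\sum_{\ell=0}^{n}\rho_L^{\,n-\ell}\rho_R^{\ell}\hat\P_{\{x_1,\ldots,x_n\}}(\xi_\infty(N)=\ell)$ after substituting Proposition~\ref{proposition: general absorption prob} and then Theorem~\ref{theorem: absorbtion probability SSEP conductances}, exchanging the two sums, and recognizing the inner sum as the binomial expansion of $\rho_L^{\,n-k}(\rho_R-\rho_L)^{k}$.

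For \eqref{eq: centered n points correlations} I would first specialize \eqref{eq: non centered correlations} (or \eqref{eq: plan Mobius}) to $n=1$, obtaining $\E_{\mus}[\eta(x)]=\rho_L+(\rho_R-\rho_L)\frac{x}{N}$, so that $\E_{\mus}[\eta(x_i)\mid I]-\E_{\mus}[\eta(x_i)]=(\rho_R-\rho_L)\bigl(\IND_{x_i\in I}-\frac{x_i}{N}\bigr)$. Conditioning on $I$ and using conditional independence once more, $\E_{\mus}[\prod_{i=1}^{n}(\eta(x_i)-\E_{\mus}[\eta(x_i)])\mid I]=(\rho_R-\rho_L)^{n}\prod_{i=1}^{n}\bigl(\IND_{x_i\in I}-\frac{x_i}{N}\bigr)$, whence $\psi(x_1,\ldots,x_n)=\hat\E\bigl[\prod_{i=1}^{n}\bigl(\IND_{x_i\in I}-\frac{x_i}{N}\bigr)\bigr]$. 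Expanding this product over subsets exactly as in the previous step, taking $\hat\E$ and substituting \eqref{eq: plan Mobius} together with Theorem~\ref{theorem: absorbtion probability SSEP conductances} for each subset, then yields \eqref{eq: psi}.

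The argument is essentially bookkeeping, so there is no genuinely hard step. The one point that deserves care is the inversion identity \eqref{eq: plan Mobius} --- that the restriction of the random set $I$ to any family of sites depends only on the absorption of the corresponding dual particles; from the explicit expression \eqref{equation: terms F(I) SSEP} this is the standard inclusion--exclusion inversion on the Boolean lattice and presents no difficulty. What remains is tracking the signs and cardinalities when the subset expansions are regrouped by $|T|$, which is where the routine care is needed.
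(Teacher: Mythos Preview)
Your proposal is correct. For the non-centered identity \eqref{eq: non centered correlations}, the second route you sketch---duality computation \eqref{eq: duality computation}, Proposition~\ref{proposition: general absorption prob}, swap of summations, binomial expansion, and Theorem~\ref{theorem: absorbtion probability SSEP conductances}---is exactly the paper's own proof. For the centered identity \eqref{eq: centered n points correlations}--\eqref{eq: psi}, the paper does not argue directly but invokes an external orthogonal-duality result together with Theorem~\ref{theorem: absorbtion probability SSEP conductances}; your derivation via the mixture representation of Theorem~\ref{theorem: stationary measure SSEP} (equivalently Theorem~\ref{Theorem: random bernulli}) and the M\"obius inversion of \eqref{equation: terms F(I) SSEP} is a genuine alternative, with the advantage of being entirely self-contained within the paper's framework rather than importing orthogonal-duality machinery. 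The only bookkeeping point to watch: expanding $\prod_i(\IND_{x_i\in I}-x_i/N)$ over the subset of indices where the indicator is retained produces a sign $(-1)^{n-j}$ (with $j$ the size of that subset), so when matching against the stated \eqref{eq: psi} you should reconcile this with the printed $(-1)^j$.
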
 
\begin{remark}
For generic $\omega_L, \omega_R >0$, \eqref{eq: psi} above becomes 
\begin{align*}
	\sum_{j=0}^n(-1)^j\sum_{1\le i_1<\ldots <i_j\le n}  \   \prod_{\ell=1}^{j}\ \frac{\frac{1}{\omega_L}+x_{i_\ell}-\ell}{\frac{1}{\omega_L}+\frac{1}{\omega_R}+N-1-\ell} \prod_{r\in[n]\setminus\{i_1,\ldots,i_j\}}\frac{\frac{1}{\omega_L}+x_r-1}{\frac{1}{\omega_L}+\frac{1}{\omega_R}+N-2}
\end{align*}
and the right hand side of \eqref{eq: non centered correlations}
$$\sum_{j=0}^n\rho_L^{n-j}(\rho_R-\rho_L)^j \ \sum_{1\le i_1<\ldots <i_j\le n} \  \prod_{\ell=1}^{j}\ \frac{\frac{1}{\omega_L}+x_{i_\ell}-\ell}{\frac{1}{\omega_L}+\frac{1}{\omega_R}+N-1-\ell}.$$
\end{remark}

\subsection{Organization of the paper}
The rest of the paper is organized as follows. In Section \ref{section: stationary measure} we introduce properly the dual process of the boundary driven SEP. We then first express the stationary distribution as a product of Bernulli measures with random parameters and from that  we prove Theorem \ref{theorem: stationary measure SSEP}. We also prove Proposition \ref{proposition: general absorption prob} and Proposition \ref{proposition: n point correlation SSEP} relying on Theorem \ref{theorem: absorbtion probability SSEP conductances} which is proved in the next sections. In Section \ref{section: 2 point correlations} we provide a simple probabilistic proof of the $2$ point correlations of the $\alpha, \beta, \gamma, \delta$ model by computing the absorption probabilities in a system with $2$ interacting particles. We also show how to compute the absorption probabilities of an arbitrary number of particles by induction after having made a guess of the expression. We conclude with Section \ref{section: Ninja method} where we provide a probabilistic coupling between the dual boundary driven SSEP on segments with different sizes to compute the absorption probabilities without the need to make an ansatz.

\section{The non-equilibrium steady state of the boundary driven SSEP}\label{section: stationary measure}
The main goal of this section is to prove Theorem \ref{theorem: stationary measure SSEP}.  As explained in the introduction our main technical tool is the stochastic duality relation satisfied by the boundary driven SEP that we are going to recall precisely below.
\subsection{The dual process and the duality relation}\label{section: dual boundary driven SSEP}
Consider the graph  $$G=([N-1], E, (\omega_{x,y})_{\{x,y\}\in E})$$ introduced in Section \ref{section: results for SSEP},  denote by $[N]_0=[N-1]\cup\{0,N\}$, $$\hat E= E \cup\{\{0,1\},\{N-1,N\}\}$$ and put $\omega_{0,1}=\omega_L$ and $\omega_{N-1,N}=\omega_R$. The dual process $(\xi_t)_{t\ge 0}$ is the Markovian interacting particle system evolving on the extended graph $$\hat G=([N]_0, \hat E, (\omega_{x,y})_{\{x,y\}\in \hat E}),$$
which  behaves in the same way as the boundary driven SEP in the bulk $[N-1]$ but the reservoirs are now substituted by the absorbing sites $\{0, N\}$. Thus $\xi=(\xi_t)_{t\ge 0}$ has state space $\hat{ \mathcal X}:=\N^{\{0\}}\times \{0,1\}^{[N-1]} \times\N^{\{N\}}$ and its generator is given by
\begin{align}\label{eq: generator dulaSSEP}
	\hat{\mathcal L} = \hat{\mathcal L}^{bulk}+\cond_L\, \hat{\mathcal L}_L+\cond_R\,  \hat{\mathcal L}_R
\end{align}
where, for all bounded functions $f : \hat{\mathcal X} \to \R$,
\begin{align*}
		\hat{\mathcal L}^\bulk f(\xi) =\ \sum_{\{x,y\}\in E} \cond_{x,y} \left\{\begin{array}{c}
		\xi(x)\, (1- \xi(y)) \left(f(\xi^{x,y})-f(\xi)\right)\\[.15cm]
		+\,\xi(y)\, (1- \xi(x)) \left(f(\xi^{y,x})-f(\xi)\right)
	\end{array} \right\}\ ,
\end{align*}
\begin{align*}
	\nonumber
		\hat{\mathcal L}_L f(\xi) =&\  \xi(1)\,  (f(\xi^{1,0})-f(\xi))
\end{align*}
and
\begin{align*}
	\nonumber
	\hat{\mathcal L}_R f(\xi) =&\  \xi(N-1)\,  \left(f(\xi^{N-1,N})-f(\xi)\right).
\end{align*}
\begin{figure}
	\centering
	\includegraphics[width=0.8\linewidth]{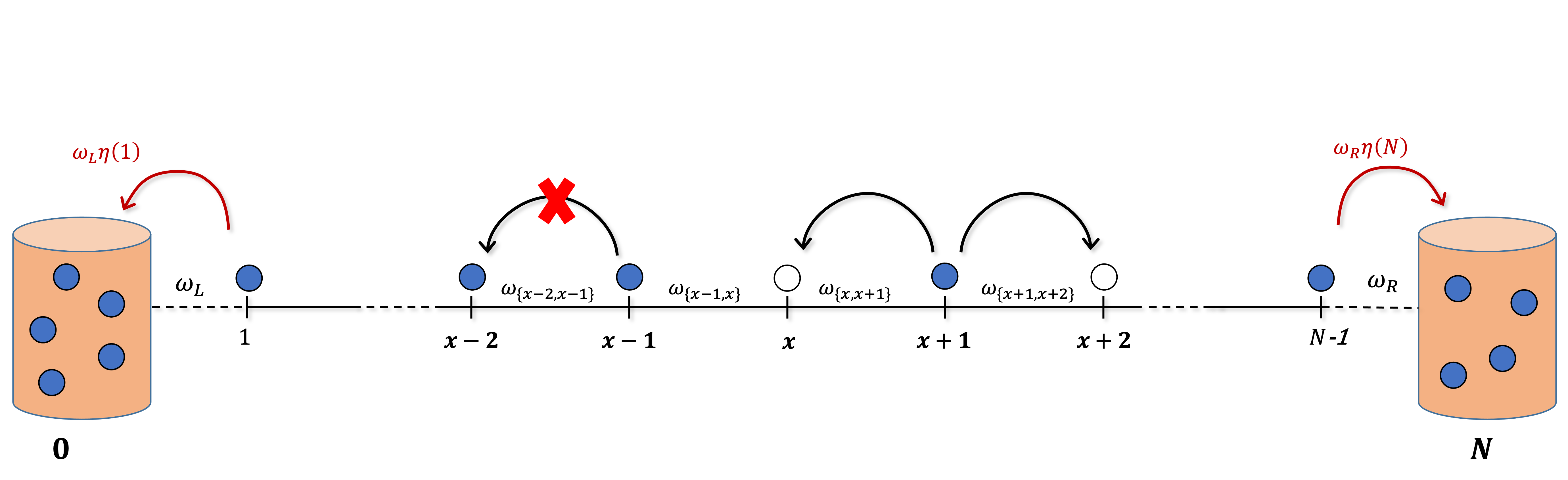}
	\caption{Schematic description of the dual dynamics.}
	\label{fig:dual bdssep}
\end{figure}
Recall that a configuration $\xi\in \mathcal {\hat X}$ will be often identified with the set $I\subset [N]_0$ of points $x$ such that $\xi(x)>0$, i.e.  $\xi(x)=\boldsymbol 1_{\{x\in I\}}$,   %For each $\xi\in \hat {\mathcal X}$ and the corresponding set $I\subset [N]_0$, 
and  that we denote by $\hat \P_\xi$ (or by $\hat \P_I$) and by $\hat \E_\xi$ or (by $\hat\E_I$) the law and the corresponding expectation of the process $(\xi_t)_{t\ge 0}$ starting from the configuration  $\xi$.

\smallskip
For $\eta\in\mathcal X$ and  $\xi\in\hat {\mathcal X}$ we define
$$D(\eta,\xi)=\rho_L^{\xi(0)} \left(\prod_{x\in[N-1]: \ \xi(x)=1}\eta(x)\right)\rho_R^{\xi(N)}.$$
%We recall the following classical duality relation below.
We recall below the duality relation between the boundary driven SEP and the process $(\xi_t)_{t\ge 0}$.
\begin{proposition}[See \cite{carinci_duality_2013-1} and \cite{FRS_ort_dual}]
The boundary driven SEP $(\eta_t)_{t\ge 0}$ with generator given in \eqref{eq: generator open SSEP} and the process $(\xi_t)_{t\ge 0}$ with generatore given in \eqref{eq: generator dulaSSEP} satisfy the following relation: for all $\eta\in\mathcal X$, $\xi\in\hat {\mathcal X}$ and $t\ge 0$
\begin{align}
\E_\eta[D(\eta_t, \xi)]=\hat \E_\xi[D(\eta, \xi_t)].
\end{align}
\end{proposition}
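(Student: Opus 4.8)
This proposition is the classical stochastic self-duality of the exclusion process carried over to the boundary-driven setting, and the author's ``proof'' is presumably just the pointer to \cite{carinci_duality_2013-1} and \cite{FRS_ort_dual}; nonetheless, here is the route I would take. The plan is to first establish the \emph{generator} duality
\begin{equation}\label{eq:gendual-plan}
\bigl(\mathcal L\, D(\,\cdot\,,\xi)\bigr)(\eta)\;=\;\bigl(\hat{\mathcal L}\, D(\eta,\,\cdot\,)\bigr)(\xi)\qquad\text{for all }\eta\in\mathcal X,\ \xi\in\hat{\mathcal X},
\end{equation}
and then to upgrade it to the semigroup identity by the standard duality lemma. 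For the upgrade, fix $\xi$ with $n$ particles; since the bulk is finite and the dual conserves the number of particles, the occupation of the absorbing sites $0$ and $N$ never exceeds $n$, so the dynamics lives on a finite state space and $D$ is bounded. Differentiating $s\mapsto \E_\eta\bigl[\hat\E_\xi[D(\eta_s,\xi_{t-s})]\bigr]$ on $[0,t]$ and using \eqref{eq:gendual-plan} shows this map is constant, and comparing its values at $s=0$ and $s=t$ gives $\E_\eta[D(\eta_t,\xi)]=\hat\E_\xi[D(\eta,\xi_t)]$.

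To prove \eqref{eq:gendual-plan} I would split $\mathcal L=\mathcal L^{\bulk}+\omega_L\mathcal L_L+\omega_R\mathcal L_R$ and $\hat{\mathcal L}=\hat{\mathcal L}^{\bulk}+\omega_L\hat{\mathcal L}_L+\omega_R\hat{\mathcal L}_R$ and match the three pieces edge by edge. For a bulk edge $\{x,y\}\in E$ one writes $D(\eta,\xi)=C\,\eta(x)^{\xi(x)}\eta(y)^{\xi(y)}$, where the prefactor $C=\rho_L^{\xi(0)}\rho_R^{\xi(N)}\prod_{z\neq x,y}\eta(z)^{\xi(z)}$ involves neither $\eta(x),\eta(y)$ nor $\xi(x),\xi(y)$; the required equality of the $\{x,y\}$-contributions then reduces to the two-site self-duality identity for the symmetric exclusion rates, which is classical. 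For the left boundary (the right one is symmetric), a case check on $\xi(1)\in\{0,1\}$ gives, with $P:=\rho_L^{\xi(0)}\bigl(\prod_{z\in[N-1]\setminus\{1\}:\,\xi(z)=1}\eta(z)\bigr)\rho_R^{\xi(N)}$,
\begin{equation}\label{eq:bdry-plan}
\bigl(\mathcal L_L D(\,\cdot\,,\xi)\bigr)(\eta)=\xi(1)\bigl(\rho_L P-D(\eta,\xi)\bigr)=\bigl(\hat{\mathcal L}_L D(\eta,\,\cdot\,)\bigr)(\xi),
\end{equation}
since creating/annihilating a particle at site $1$ affects $D$ exactly as moving the dual particle from $1$ to the absorbing site $0$, with the factor $\rho_L^{\xi(0)}$ in $D$ encoding the reservoir density. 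Summing the bulk and boundary contributions yields \eqref{eq:gendual-plan}.

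I do not foresee a real obstacle: the only delicate point is the boundary bookkeeping in \eqref{eq:bdry-plan}, where the reservoir generator $\mathcal L_L$ is not self-adjoint (creation at rate $\rho_L$, annihilation at rate $1-\rho_L$) and yet has to match the one-way absorption $1\to 0$ of the dual — a discrepancy reconciled precisely by the way $D$ weights the absorbing site $0$ with $\rho_L$. Alternatively, one can simply quote the duality for boundary-driven SEP established in \cite{carinci_duality_2013-1} and \cite{FRS_ort_dual}, of which this proposition is a direct instance.
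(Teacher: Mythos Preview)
Your reading is right: the paper does not prove this proposition at all but simply cites \cite{carinci_duality_2013-1} and \cite{FRS_ort_dual} as the source of the duality relation. Your sketch via the generator identity \eqref{eq:gendual-plan} and the interpolation argument is the standard route and is correct; in particular your boundary computation \eqref{eq:bdry-plan} checks out (for $\xi(1)=1$ one gets $\mathcal L_L D(\cdot,\xi)(\eta)=P(\rho_L-\eta(1))=\rho_L P-D(\eta,\xi)$, and on the dual side $D(\eta,\xi^{1,0})=\rho_L P$ since absorption increments $\xi(0)$), so there is nothing to add beyond what you wrote.
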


\subsection{The labelled stirring dual process}
In order to prove  Theorem \ref{theorem: stationary measure SSEP} we consider the labelled  stirring construction  (see, e.g., \cite{DeMasiPresutti} or \cite{liggett_interacting_2005-1}) of the dual process on $\hat G$. More precisely,  define on the same probability space independent random variables $(\Gamma_{x,y})_{\{x,y\}\in E}$, $\Gamma_0$, $\Gamma_{N}$ where $\Gamma_{x,y}$ is a Poisson point process with rate  $\omega_{x,y}$ providing the swapping times across the edge $\{x,y\}$ and $\Gamma_0$ and $\Gamma_{N}$  are  Poisson processes with rate $\omega_L$ and $\omega_R$ respectively, giving the absorbing times from sites  $1$  and $N-1$ respectively.   The  labelled dual particles  system $(X^{x_1}_t,\ldots, X^{x_n}_t)_{t\ge 0}$ starting from $x_1<\ldots<x_n$ is obtained by following deterministically the arrows obtained  as a realization of the Poisson processes just introduced. Namely $(X^{x_1}_t,\ldots, X^{x_n}_t)_{t\ge 0}$ is constant until a Poisson mark is encountered. If $t\in \Gamma_{x,y}$ and if at $t^-$ there is a  particle at $x$ or at $y$ or at both sites, then these particles swap their positions (i.e. if a particle is at $x$ at $t^-$ then it moves to $y$  at time $t$ and viceversa), keeping track of their label given by the initial position. If $t\in \Gamma_0$ (or $t\in \Gamma_{N}$) and at $t^-$ a  particle is present at $1$ (or at $N-1$) then that particles is absorbed at $0$ (or at $N$).
%Namely $(X^{x_1}_t,\ldots, X^{x_n}_t)_{t\ge 0}$ is constant outside $\Gamma_0\cup\Gamma_1\cup_{\{x,y\}\in E}\Gamma_{x,y}$, for $t\in \Gamma_{x,y}$, if there are  particles at $x$ or $y$ or at both sites, then these particles swap their positions (i.e. if a particle is at $x$ then it moves to $y$ and viceversa), keeping track of their label given by the initial position. If $t\in \Gamma_0$ (or $t\in \Gamma_{N-1}$) and a particle is present at $1$ (or at $N-1$) then that particles is absorbed at $0$ (or at $N$).

For each $n\in [N-1]$ and $x_1<\ldots<x_n$, we denote by $\P^{stir}_\xi$ (respectively $\E^{stir}_\xi$) the probability (respectively the expectation) induced by the random arrows on the space of trajectories of the labelled stirring process started from $\xi=\{x_1,\ldots,x_n\}$.

Notice that by the  construction it follows that for al $\Gamma \subset  [N]$ and $f:[N]_0^\Gamma\to \R$
\begin{align}\label{eq: consistency dual SSEP}
	\E^{stirr}_{[N]}[  f( (X^x_t)_{x\in \Gamma})]= \E^{stirr}_{\Gamma}[  f( (X^x_t)_{x\in \Gamma})].
	\end{align}
Moreover, if $f:[N]_0^\Gamma\to \R$ is symmetric (where symmetric means invariant by permutations of its entries), then 
\begin{align}\label{eq: equality in law of stirring and configuration dual SSEP}
	\E^{stirr}_{\Gamma}[  f( (X^x_t)_{x\in \Gamma})]=\hat \E_\Gamma[f(\xi_t) ].
	\end{align}

\subsection{Probabilistic interpretation of the non-equilibrium steady state and proof of Theorem \ref{theorem: stationary measure SSEP}}\label{section: proof of theorem stationary measure SSEP}

 We  introduce random variables $(U_x)_{x\in[N-1]}$ on the same probability space of the stirring construction, with $U_x\in\{\rho_L, \rho_R\}$, which are not independent and such that their  joint law  is given by
\begin{align*}
&\P_{[N-1]}^{stirr}((U_x)_{x\in[N-1]}=\bar \rho_I)= \E^{stirr}_{[N-1]} \left[\prod_{x\in I}\boldsymbol 1_{\{X^x_\infty=N\}} \prod_{x\in [N-1]\setminus I}\boldsymbol 1_{\{X^x_\infty=0\}}\right]
\end{align*}
where, given $I\subset [N-1]$, $\bar \rho_I$ is the vector $(\bar \rho_I(x))_{x\in I}$ with $\bar \rho_I(x)=\rho_R$ if $x\in I$ and  $\bar \rho_I(x)=\rho_L$ otherwise.

 We start by showing  the following result, which clarifies the probabilistic interpretation of the non-equilibrium steady state.
 \begin{theorem}\label{Theorem: random bernulli}The stationary distribution of the boundary driven SEP on $G$ with reservoirs parameters $\rho_L, \rho_R, \omega_L, \omega_R>0$ is 
\begin{align}\label{eq: stat measue SSEP in terms of rv U}
\mus=\E^{stirr}_{[N-1]}\left[\otimes_{x\in [N-1]} \rm{Ber}(U_x)\right].
\end{align}
\end{theorem}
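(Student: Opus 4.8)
The plan is to verify the claimed identity \eqref{eq: stat measue SSEP in terms of rv U} by checking that the right-hand side is stationary for the generator $\mathcal L$ of \eqref{eq: generator open SSEP}, using the duality relation together with the stirring construction. Since the boundary driven SEP on a finite connected graph is an irreducible Markov chain on the finite state space $\mathcal X$, it has a unique stationary distribution, so it suffices to identify one. The natural way to identify it through duality is to show that the $D$-correlation functions $\xi \mapsto \E_{\mu}[D(\eta,\xi)]$ of the candidate measure $\mu = \E^{stirr}_{[N-1]}[\otimes_{x\in[N-1]}\mathrm{Ber}(U_x)]$ are precisely the quantities $\hat\E_\xi[\text{something stationary}]$ — concretely, that they equal the stationary values of the dual observables.

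First I would compute, for a dual configuration $\xi$ identified with a subset $I_0 \cup \{\text{copies at }0,N\}$, the value $\E_{\mu}[D(\eta,\xi)]$. Because $\mu$ is a mixture of product Bernoulli measures with parameters $\rho_L$ or $\rho_R$ at each bulk site, conditioning on the realization $(U_x)_x = \bar\rho_J$ gives $\E[\prod_{x: \xi(x)=1, x\in[N-1]}\eta(x)] = \prod_{x \in I_0}\bar\rho_J(x)$, and then multiplying by $\rho_L^{\xi(0)}\rho_R^{\xi(N)}$ and averaging over $J$ with the stirring weights yields
\begin{align*}
\E_{\mu}[D(\eta,\xi)] = \rho_L^{\xi(0)}\rho_R^{\xi(N)}\,\E^{stirr}_{[N-1]}\Big[\prod_{x\in I_0}U_x\Big].
\end{align*}
Using the consistency relation \eqref{eq: consistency dual SSEP} one can drop the spectator coordinates and rewrite $\E^{stirr}_{[N-1]}[\prod_{x\in I_0}U_x]$ as $\E^{stirr}_{I_0}[\prod_{x\in I_0}(\rho_R \1_{\{X^x_\infty = N\}} + \rho_L\1_{\{X^x_\infty=0\}})]$, i.e. as $\E^{stirr}_{I_0}[\rho_L^{\#\{x: X^x_\infty=0\}}\rho_R^{\#\{x: X^x_\infty=N\}}]$. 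Expanding the product, this is exactly $\E^{stirr}_{I_0}[D(\cdot,\xi_\infty)]$ interpreted correctly — more precisely, $\E_{\mu}[D(\eta,\xi)] = \hat\E_\xi[\rho_L^{\xi_\infty(0)}\rho_R^{\xi_\infty(N)}]$, using that the function on dual configurations that records the absorbed masses is symmetric so that \eqref{eq: equality in law of stirring and configuration dual SSEP} applies, and that absorbed particles at $0$ (resp. $N$) are frozen.

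Next I would argue stationarity. By duality, $\frac{d}{dt}\E_{\mu}[\E_\eta[D(\eta_t,\xi)]]|_{t=0}$ equals $\E_{\mu}[\hat{\mathcal L}D(\eta,\cdot)(\xi)]$ evaluated through the dual generator acting on the $\xi$ variable; equivalently, it suffices to show that for every $\xi$, $t\mapsto \hat\E_\xi[\,\E_{\mu}[D(\eta,\xi_t)]\,]$ is constant. But we have just identified $\E_{\mu}[D(\eta,\xi_t)] = \hat\E_{\xi_t}[\rho_L^{\xi_\infty(0)}\rho_R^{\xi_\infty(N)}]$, and by the Markov property $\hat\E_\xi[\hat\E_{\xi_t}[\rho_L^{\xi_\infty(0)}\rho_R^{\xi_\infty(N)}]] = \hat\E_\xi[\rho_L^{\xi_\infty(0)}\rho_R^{\xi_\infty(N)}]$, which is manifestly independent of $t$ since the total absorbed configuration $\xi_\infty$ is a tail-measurable quantity (the process is eventually absorbed, $\hat\P_\xi$-a.s., by connectedness). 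Hence $\E_{\mu}[\E_\eta[D(\eta_t,\xi)]] = \E_{\mu}[D(\eta,\xi)]$ for all $t$ and all $\xi$. Since the functions $\{D(\cdot,\xi) : \xi \in \hat{\mathcal X}, |\xi|\le N-1\}$ are duality functions whose linear span separates probability measures on $\mathcal X$ (the monomials $\eta\mapsto\prod_{x\in I}\eta(x)$ over $I\subset[N-1]$ already form a basis of functions on $\mathcal X$, and $D(\cdot,\xi)$ with $\xi$ supported in the bulk is a nonzero multiple of such a monomial), this forces $\mu\,\mathcal L = 0$, i.e. $\mu = \mus$.

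The main obstacle is the bookkeeping in the second step: making precise the passage from $\E^{stirr}_{[N-1]}[\prod_{x\in I_0}U_x]$ to a genuine dual expectation $\hat\E_\xi[\rho_L^{\xi_\infty(0)}\rho_R^{\xi_\infty(N)}]$, i.e. checking that expanding $\prod_{x\in I_0}(\rho_R\1_{\{X^x_\infty=N\}}+\rho_L\1_{\{X^x_\infty=0\}})$ really reproduces $D$ evaluated at the (labelled) absorbed configuration and then invoking symmetry via \eqref{eq: consistency dual SSEP}--\eqref{eq: equality in law of stirring and configuration dual SSEP} to convert to the unlabelled process; one must also take care that $\xi$ may already have mass at $0$ or $N$, handled by the prefactor $\rho_L^{\xi(0)}\rho_R^{\xi(N)}$ together with the fact that those particles never move. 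Everything else is soft: existence/uniqueness of the stationary measure, separation of measures by the duality functions, and the Markov property giving time-invariance of the absorption functional.
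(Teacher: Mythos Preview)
Your proof is correct and follows essentially the same route as the paper: both arguments hinge on the identical stirring computation that converts $\E^{stirr}_{[N-1]}[\prod_{x\in I_0}U_x]$ into $\hat\E_{I_0}[\rho_L^{\xi_\infty(0)}\rho_R^{\xi_\infty(N)}]$ via the consistency relation \eqref{eq: consistency dual SSEP} and the passage to the unlabelled process \eqref{eq: equality in law of stirring and configuration dual SSEP}. The only packaging difference is that the paper first computes $\E_{\mus}[\prod_i\eta(x_i)]$ directly by sending $t\to\infty$ in the duality relation (their \eqref{eq: duality computation}) and then matches moments, whereas you instead show that the candidate's $D$-moments are harmonic for the dual generator and invoke uniqueness; these are equivalent ways of concluding once the stirring identity is in hand.
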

\begin{proof}
The stationary measure $\mus$ is completely characterized by the moments $$\E_{\mus}\left[\prod_{i=1}^n\eta(x_i)\right], \ \forall n\in[N-1] \ \text{and } \ 1\le x_1<\ldots<x_n\le N-1.$$ By duality, we have 
\begin{align}\label{eq: duality computation}
	\nonumber &\E_{\mus}\left[\prod_{i=1}^n\eta(x_i)\right]=\lim_{t\to \infty}\E_\eta\left[\prod_{i=1}^n\eta_t(x_i)\right]\\
	\nonumber&=\lim_{t\to \infty}\hat \E_{\{x_1,\ldots,x_n\}}[D(\eta, \xi_t)]=\hat \E_{\{x_1,\ldots,x_n\}}\left[    \rho_L^{\xi_\infty(0)}  \rho_R^{\xi_\infty(N)}     \right]\\
	&=\sum_{\ell=0}^n \rho_R^\ell \rho_L^{n-\ell}\hat\P_{\{x_1,\ldots,x_n\}}[\xi_\infty(N)=\ell].
\end{align}
Thus we only need to show that for all $n\in[N-1]$  and  $1\le x_1<\ldots< x_n\le N-1$
\begin{align}\label{eq: desired equality proof labelled SSEP}
\int \prod_{i=1}^n\eta(x_i)\dd \E^{stirr}_{[N-1]}\left[\otimes_{x\in [N-1]} \rm{Ber}(U_x)\right]= \sum_{\ell=0}^n \rho_R^\ell \rho_L^{n-\ell}\hat\P_{\{x_1,\ldots,x_n\}}[\xi_\infty(N)=\ell].
\end{align}

\begin{figure}
	\centering
	\includegraphics[width=0.8\linewidth]{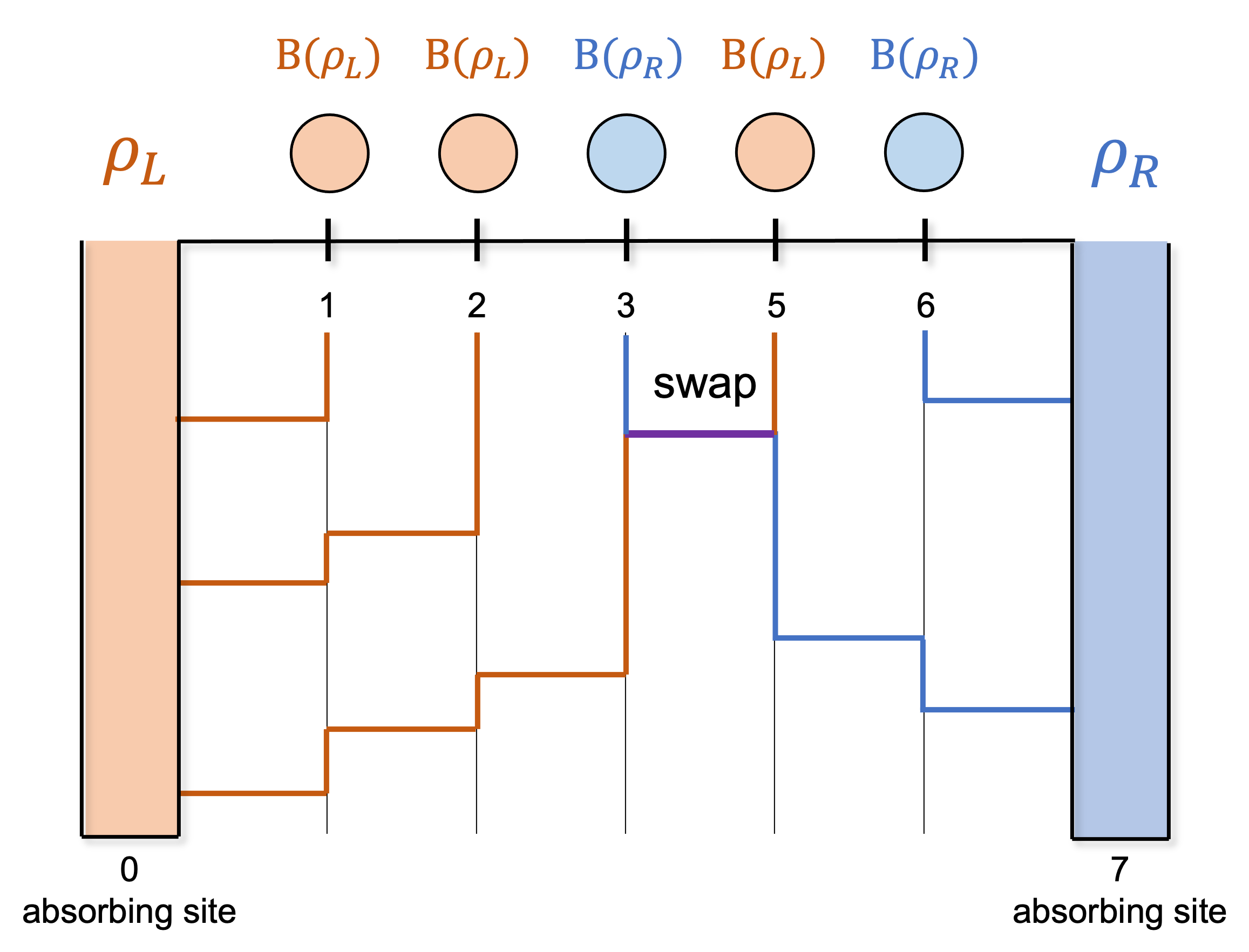}
	\caption{Schematic description of the proof via the stirring construction of the dual process.}
	\label{fig:ninjaparticle7}
\end{figure}
We then  have, using \eqref{eq: consistency dual SSEP},
\begin{align*}
&\int \prod_{i=1}^n\eta(x_i)\dd \E^{stirr}_{[N-1]}\left[\otimes_{x\in [N-1]} \rm{Ber}(U_x)\right]= \E^{stirr}_{\{x_1,\ldots,x_n\}}\left[\prod_{i=1}^n U_{x_i}\right]\\
&=\sum_{I\subset \{x_1,\ldots,x_n\}}\rho_R^{|I|}\rho_L^{n-|I|} \E^{stirr}_{\{x_1,\ldots,x_n\}}\ \left [\prod_{x\in I}\boldsymbol 1_{\{X^x_\infty=N\}} \prod_{x\in \{x_1,\ldots,x_n\}\setminus I}\boldsymbol 1_{\{X^x_\infty=0\}}\right]\\
&=\sum_{\ell=0}^n\rho_R^\ell \rho_L^{n-\ell}    \E^{stirr}_{\{x_1,\ldots,x_n\}}\ \left[ \sum_{I\subset \{x_1,\ldots,x_n\}\ : \ |I|=\ell}\left(\prod_{x\in I}\boldsymbol 1_{\{X^x_\infty=N\}} \prod_{x\in \{x_1,\ldots,x_n\}\setminus I}\boldsymbol 1_{\{X^x_\infty=0\}}\right)\right].
\end{align*}
Noticing that $$\sum_{I\subset \{x_1,\ldots,x_n\}\ : \ |I|=\ell}\left(\prod_{x\in I}\boldsymbol 1_{\{X^x_\infty=N\}} \prod_{x\in \{x_1,\ldots,x_n\}\setminus I}\boldsymbol 1_{\{X^x_\infty=0\}}\right)=\boldsymbol 1_{\{\sum_{\ell=1}^n\1_{\{X^{x_\ell}_\infty=N\}}=\ell \}}$$
is a symmetric function and using \eqref{eq: equality in law of stirring and configuration dual SSEP}, we have that 
\begin{multline}\E^{stirr}_{\{x_1,\ldots,x_n\}} \left[ \sum_{I\subset \{x_1,\ldots,x_n\}\ : \ |I|=\ell}\left(\prod_{x\in I}\boldsymbol 1_{\{X^x_\infty=N\}} \prod_{x\in \{x_1,\ldots,x_n\}\setminus I}\boldsymbol 1_{\{X^x_\infty=0\}}\right)\right]\\=\hat \P_{\{x_1,\ldots,x_n\}\ }\left( \xi_\infty(N)=\ell\right)
\end{multline}
from which \eqref{eq: desired equality proof labelled SSEP} follows and in particular \eqref{eq: stat measue SSEP in terms of rv U} also follows.
\end{proof}
We can finally obtain Theorem \ref{theorem: stationary measure SSEP}.
\begin{proof}[Proof of Theorem \ref{theorem: stationary measure SSEP}]
For any $I\subset [N-1]$
\begin{align*}
&\P^{stirr}_{[N-1]}\left( \cup_{x\in I} \{U_x=\rho_R\}\cup_{y\in [N-1]\setminus I} \{U_y=\rho_L\} \right)\\&=\E^{stirr}_{[N-1]} \left[ \prod_{x\in I}\boldsymbol 1_{\{X^x_\infty=N\}} \prod_{x\in \{x_1,\ldots,x_n\}\setminus I}\boldsymbol 1_{\{X^x_\infty=0\}}\right]
\\&= \E^{stirr}_{ [N-1]} \left [\prod_{x\in I}\boldsymbol 1_{\{X^x_\infty=N\}} \prod_{x\in [N-1]\setminus I}\left(1-\boldsymbol 1_{\{X^x_\infty=N\}}\right)\right]\\
&= \sum_{J\subset [N-1]\setminus I}(-1)^{|J|}\E^{stirr}_{I\cup J} \left [ \prod_{x\in I\cup J}\boldsymbol 1_{\{X^x_\infty=N\}}\right]
\end{align*}
where we used \eqref{eq: consistency dual SSEP} in the last equality. Noticing that $\prod_{x\in I\cup J}\boldsymbol 1_{\{X^x_\infty=N\}}$ is a symmetric function, using \ref{eq: equality in law of stirring and configuration dual SSEP}, we conclude that 
\begin{multline}
\P^{stirr}_{[N-1]}\left( \cup_{x\in I} \{U_x=\rho_R\}\cup_{y\in [N-1]\setminus I} \{U_y=\rho_L\} \right)\\= \sum_{J\subset [N-1]\setminus I}(-1)^{|J|}\hat \P_{I\cup J} \left(\xi_\infty(N)=|I\cup J|\right)
\end{multline}
from which we obtain
$$\E^{stirr}\left[\otimes_{x\in [N-1]} \rm{Ber}(U_x)\right]=\sum_{J\subset \mathcal P([N-1]): \ I\subset J }F(I)\bigg(\otimes_{x\in I}\rm{Ber}(\rho_R)\otimes_{x\in [N-1]\setminus I}\rm{Ber}(\rho_L) \bigg),$$
with $F(I)$ given in \eqref{equation: terms F(I) SSEP} and thus the thesis of Theorem \ref{theorem: stationary measure SSEP}. 

\end{proof}

\subsection{Proofs of Proposition \ref{proposition: general absorption prob} and Proposition \ref{proposition: n point correlation SSEP}}
We conclude the section by proving Proposition \ref{proposition: general absorption prob} and Proposition \ref{proposition: n point correlation SSEP} which are also achieved by duality.
\begin{proof}[Proof of Proposition \ref{proposition: general absorption prob}]
	Arguing as in the Proof of Theorem \ref{theorem: stationary measure SSEP}, we obtain
	\begin{align*}
		&\hat \P_{\{x_1,\ldots, x_n\}}(\xi_\infty(N)=\ell) \\&=\sum_{I\subset \{x_1,\ldots,x_n\} \ : \ |I|=\ell}\quad  \E^{stirr}_{\{x_1,\ldots,x_n\}}\ \left [\prod_{x\in I}\boldsymbol 1_{\{X^x_\infty=N\}} \prod_{x\in \{x_1,\ldots,x_n\}\setminus I}\boldsymbol 1_{\{X^x_\infty=0\}}\right]\\
		&=\sum_{I\subset \{x_1,\ldots,x_n\} \ : \ |I|=\ell}\quad  \E^{stirr}_{\{x_1,\ldots,x_n\}}\ \left [\prod_{x\in I}\boldsymbol 1_{\{X^x_\infty=N\}} \prod_{x\in \{x_1,\ldots,x_n\}\setminus I}(1-\boldsymbol 1_{\{X^x_\infty=N\}})\right]\\
		&= \sum_{I\subset \{x_1,\ldots,x_n\} \ : \ |I|=\ell}\  \sum_{J\subset \{x_1,\ldots,x_n\} \setminus I} (-1)^{|J|} \hat\P_{I\cup J} (\xi_\infty (N)= |I\cup J|).
	\end{align*}
	Exchanging the order of the above summations we get 
	\begin{align*}
		&\hat \P_{\{x_1,\ldots, x_n\}}(\xi_\infty(N)=\ell)\\
		&= \sum _{ K\subset \{x_1,\ldots,x_n\} \ : \ |K|\ge \ell }\hat\P_{K} (\xi_\infty (N)= |K|) \left( \sum_{I \subset K\ : \ |I|=\ell}(-1)^{|K|-|I|}\right)\\
		&=\sum _{ K\subset \{x_1,\ldots,x_n\} \ : \ |K|\ge \ell } (-1)^{|K|-\ell} {{|K|}\choose {\ell}} \hat\P_{K} (\xi_\infty (N)= |K|)
	\end{align*}
	concluding the proof.
\end{proof}

\begin{proof}[Proof of Proposition \ref{proposition: n point correlation SSEP}]
	
	For what concerns the centered $n$ point correlations, equations \eqref{eq: centered n points correlations} and \eqref{eq: psi}  follows by the combination of Theorem \ref{theorem: absorbtion probability SSEP conductances} above (which is proved in Section \ref{section: 2 point correlations} and Section \ref{section: Ninja method} below with two different approaches) with \cite[Theorem 5.1]{FRS_ort_dual}.
	
	For what concerns the non-centered $n$ point correlations, recall from \eqref{eq: duality computation}  that

	\begin{align*}
		\E_{\mus}\left[\prod_{i=1}^n\eta(x_i)\right]=\sum_{\ell=0}^n \rho_R^\ell \rho_L^{n-\ell}\hat\P_{\{x_1,\ldots,x_n\}}[\xi_\infty(N)=\ell].
	\end{align*}
	Employing Proposition \ref{proposition: general absorption prob} we obtain 
	
	\begin{align*}
		&	\E_{\mus}\left[\prod_{i=1}^n\eta(x_i)\right]=\sum_{\ell=0}^n \rho_R^\ell \rho_L^{n-\ell} \sum_{k=\ell}^n (-1)^{k-\ell} {{k} \choose{\ell}}\sum_{1\le i_1<\ldots<i_k\le n}\hat \P_{\{x_{i_1},\ldots, x_{i_k}\}}(\xi_\infty(N)=k) 
	\end{align*}
	and changing the order of summations we have 
	\begin{align*}
		&	\E_{\mus}\left[\prod_{i=1}^n\eta(x_i)\right]=        \sum_{k=0}^n\ \sum_{1\le i_1<\ldots<i_k\le n}\hat \P_{\{x_{i_1},\ldots, x_{i_k}\}}(\xi_\infty(N)=k)    \sum_{\ell=0}^k  (-1)^{k-\ell} {{k} \choose{\ell}}\rho_R^\ell \rho_L^{n-\ell} \\
		&=        \sum_{k=0}^n\ \sum_{1\le i_1<\ldots<i_k\le n}\hat \P_{\{x_{i_1},\ldots, x_{i_k}\}}(\xi_\infty(N)=k)   \rho_L^{n-k} \sum_{\ell=0}^k   {{k} \choose{\ell}}\rho_R^\ell (-\rho_L)^{k-\ell} \\
		&=  \sum_{k=0}^n \rho_L^{n-k} (\rho_R-\rho_L)^{k}\sum_{1\le i_1<\ldots<i_k\le n}\hat \P_{\{x_{i_1},\ldots, x_{i_k}\}}(\xi_\infty(N)=k)  
	\end{align*}
	
	and employing Theorem \ref{theorem: absorbtion probability SSEP conductances} above we conclude the proof.

\end{proof}

\section{2-point correlations and the absorption probabilities via induction}\label{section: 2 point correlations}
In this section, our objective is to calculate the two-point stationary correlations for the boundary driven SSEP on a segment where all the conductances are set to one, except for the conductances that connect site $1$ with the left reservoir and site $N-1$ with the right reservoir. Thus we consider the $\alpha$, $\beta$, $\gamma$, $\delta$ model and we allow the possibility to rescale the intensity of the conductances connected to reservoirs as done in hydrodynamic limits (see \cite{goncalves_survey}).

Because by duality, this amount to compute the absorption probabilities of two dual interacting particles, we start by computing such quantities in Subsection \ref{subsection: 2 point correlations} below. Based on the resulting expression, we then make an educated guess for the absorption probability of an arbitrary number of particles, which we subsequently prove to be true by induction in Subsection \ref{subsection: induction} below.

Additionally, in Section \ref{section: Ninja method} below, we present an alternative approach to compute the absorption probabilities. This method relies on a probabilistic coupling technique that eliminates the need to make assumptions regarding the specific form of these probabilities.

% and then, from the expression that we obtain we make a guess for the absorption probability of an arbitrary amount of particles that we prove by induction in Subsection \ref{subsection: induction} below.

%In Section \ref{section: Ninja method} below instead, we provide an alternative approach to compute the absorption probabilities, based on a probabilistic coupling which does not require any guess on the form of such probabilities.
 \subsection{2-point correlations  via three martingales}\label{subsection: 2 point correlations}
 Consider the open SSEP on  the homogeneous segment $([N-1], E, (\omega_{x,y})_{\{x,y\}\in E})$, with $\{x,y\}\in E$ if and only if $|x-y|=1$, $\omega_{x,x+1}=1$ for each $x\in\{1,\ldots,N-2\}\in E$, and set $\omega_L,\omega_R>0$. 
 %\begin{align*}
 	%\begin{split}
 	%	\rho_L =\frac{\alpha}{\alpha + \gamma} >0\\
 	%	\rho_L =\frac{\delta}{\delta+ \beta} >0
 %	\end{split}
 %	\begin{split}
 	%	\omega_L =&\frac{1}{\alpha + \gamma}>0\\
 	%	\omega_R =&\frac{1}{\delta + \beta}>0.
 	%\end{split}
% \end{align*}

 We  prove the following expression for the two point centered correlation previously derived  in \cite{sphon88} and \cite{derrida_entropy_2002} via other methods.
\begin{proposition}\label{proposition: 2 point correlation SSEP}
 For $x<y$
 
\begin{align*}
\rm{Cor}(x,y):=&\hat \E_{\mus}[\eta(x)\eta(y)]-\hat \E_{\mus}[\eta(x)]\hat \E_{\mus}[\eta(y)]\\
&=-(\rho_R-\rho_L)^2  \frac{\left(\frac{1}{\omega_L}+x-1\right)\left(\frac{1}{\omega_R}+N-1-y\right)}{\left(\frac{1}{\omega_L}+\frac{1}{\omega_R}+N-2\right)^2\left(\frac{1}{\omega_L}+\frac{1}{\omega_R}+N-3\right)} .
\end{align*}
\end{proposition}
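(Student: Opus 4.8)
The plan is to reduce everything to the computation of absorption probabilities for one and two dual particles, using the duality identity \eqref{eq: duality computation} and the $n=2$ instance of Proposition \ref{proposition: general absorption prob}. Concretely, by duality the one-point function is $\E_{\mus}[\eta(x)] = \rho_L + (\rho_R-\rho_L)\,\hat\P_{\{x\}}(\xi_\infty(N)=1)$, and the two-point function is $\E_{\mus}[\eta(x)\eta(y)] = \rho_L^2 + \rho_L(\rho_R-\rho_L)\big(\hat\P_{\{x\}}(\xi_\infty(N)=1)+\hat\P_{\{y\}}(\xi_\infty(N)=1)\big) + (\rho_R-\rho_L)^2\,\hat\P_{\{x,y\}}(\xi_\infty(N)=2)$, so that the centered correlation collapses to
\begin{align*}
\mathrm{Cor}(x,y) = (\rho_R-\rho_L)^2\Big(\hat\P_{\{x,y\}}(\xi_\infty(N)=2) - \hat\P_{\{x\}}(\xi_\infty(N)=1)\,\hat\P_{\{y\}}(\xi_\infty(N)=1)\Big).
\end{align*}
Thus the whole proposition is equivalent to identifying these three absorption probabilities. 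The announced ``three martingales'' are exactly the tools for this: a single dual particle on $\{0,1,\ldots,N\}$ with conductances $1$ in the bulk and $\omega_L,\omega_R$ at the ends performs a (time-changed) random walk for which a suitable affine function of position is a martingale, giving $\hat\P_{\{x\}}(\xi_\infty(N)=1) = \frac{\frac1{\omega_L}+x-1}{\frac1{\omega_L}+\frac1{\omega_R}+N-2}$ by optional stopping (the ``effective resistances'' $1/\omega_L$ and $1/\omega_R$ replace the missing unit edges at the boundary).

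For the two-particle term I would run the labelled stirring dual $(X^x_t, X^y_t)$ with $X^x_0 = x < y = X^y_0$; by the exclusion/stirring rule the two walkers keep their order until one is absorbed, and the first to be absorbed at $N$ must be $X^y$ (the right one), after which $X^x$ continues as a single particle. The natural martingales here are: (i) the one-particle martingale applied to $X^x$, (ii) the analogous one applied to $X^y$, and (iii) a ``product-type'' martingale in the two positions — something like an affine-in-each-coordinate function $g(X^x_t,X^y_t)$ chosen so that the interaction terms (the coupled swaps when the two particles are adjacent) cancel in the generator. Applying optional stopping to these three martingales at the absorption times yields a small linear system whose solution gives $\hat\P_{\{x,y\}}(\xi_\infty(N)=2)$; I expect it to come out as $\frac{(\frac1{\omega_L}+x-1)(\frac1{\omega_L}+y-2)}{(\frac1{\omega_L}+\frac1{\omega_R}+N-2)(\frac1{\omega_L}+\frac1{\omega_R}+N-3)}$, matching the $n=2$ case of \eqref{eq: absorbtion probabilities SSEP conductance at reservoirs}. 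Substituting these closed forms into the display for $\mathrm{Cor}(x,y)$ and simplifying the resulting rational expression then produces the claimed formula; this is the routine algebra step.

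The main obstacle is constructing the correct third (product) martingale and handling the boundary interaction carefully: one must verify that the candidate function is genuinely space-time harmonic for the two-particle stirring generator including the absorbing boundary — the bulk exclusion terms cancel by symmetry of the chosen affine form, but the absorption events at $0$ and $N$ change the number of particles and so the martingale must be set up on the enlarged state space (or equivalently stopped at the first absorption and then restarted), and the effective-resistance shift by $1/\omega_L, 1/\omega_R$ must be threaded consistently through both the single- and two-particle computations. Once the three optional-stopping identities are in hand the rest is bookkeeping. An alternative, should the direct martingale guess prove awkward, is to invoke Theorem \ref{theorem: absorbtion probability SSEP conductances} together with the displayed generalization \eqref{eq: absorbtion probabilities SSEP conductance at reservoirs} (proved later via the coupling of Section \ref{section: Ninja method}) and simply read off the three probabilities, but the point of this subsection is to give the self-contained probabilistic derivation via martingales.
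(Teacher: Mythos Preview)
Your overall strategy is exactly the paper's: reduce $\mathrm{Cor}(x,y)$ via duality to absorption probabilities, then extract these via martingales and optional stopping. Your preliminary reduction to $(\rho_R-\rho_L)^2\bigl(\hat\P_{\{x,y\}}(\xi_\infty(N)=2)-\hat\P_{\{x\}}(\xi_\infty(N)=1)\,\hat\P_{\{y\}}(\xi_\infty(N)=1)\bigr)$ is in fact a little cleaner than what the paper does, which writes out all the cross-terms and cancels at the end.

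The gap is in your description of the third martingale. You expect an affine-in-each-coordinate function $g(X^x,X^y)$ for which ``the interaction terms cancel in the generator'': no such function of positions alone exists. When the two particles are adjacent, applying the generator to $h(X^x)h(X^y)$ (and also to $h(X^y)-h(X^x)$) produces a nonzero drift, essentially because exclusion suppresses exactly one of the four nearest-neighbour moves. The paper's Lemma~\ref{proposition: martingale SSEP} resolves this by introducing an additive functional
\[
D_n=\sum_{k=0}^{n-1}\mathbf 1_{\{X^y_k-X^x_k=1,\ X^x_k,X^y_k\notin\{0,N\}\}}\Bigl(\mathbf 1_{\{X^x_k\neq 1,\ X^y_k\neq N-1\}}+\tfrac{2}{\omega_L+1}\mathbf 1_{\{X^x_k=1\}}+\tfrac{2}{\omega_R+1}\mathbf 1_{\{X^y_k=N-1\}}\Bigr),
\]
and showing that the \emph{three} processes $h(X^x_n)+h(X^y_n)$, $h(X^y_n)-h(X^x_n)-D_n$, and $h(X^x_n)h(X^y_n)+\tfrac12 D_n$ are martingales. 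Optional stopping then yields three linear equations in the three unknowns $\hat\P_{\{x,y\}}(\xi_\infty(N)=2)$, $\hat\P_{\{x,y\}}(\xi_\infty(N)=1)$ and $\hat\E_{(x,y)}[D_\infty]$; eliminating $\hat\E_{(x,y)}[D_\infty]$ gives exactly the formula you anticipated, $\hat\P_{\{x,y\}}(\xi_\infty(N)=2)=\dfrac{h(x)(h(y)-1)}{h(N)(h(N)-1)}$. So your plan is right in outline, but the ``product-type martingale'' you are looking for is not a function of the two positions alone --- it is a function of positions \emph{plus a compensator}, and recognising the need for that compensator (and computing its boundary weights $\tfrac{2}{\omega_L+1},\tfrac{2}{\omega_R+1}$) is the actual technical content of this step.
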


We achieve the above result by computing the absorption probabilities of two dual particles evolving on the extended segment $[N-1]\cup\{0,N\}$, where $\{0,N\}$ are the two absorbing sites. In particular, in order to compute  asymptotic absorption probabilities,  it is enough to consider the skeleton chain of the dual process $(\xi_t)_{t\ge 0}$ starting from $\xi_0=\{x,y\}$. Denote by $(X^x_n, X^y_n)$ the position of two dual particles starting from $x<y$, where swapping is not allowed, after $n$ steps (i.e. $n$ is the number of jumps). Notice that $X^x_n=X^y_n$ if and only if $X^x_n=X^y_n\in\{0,N\}$, i.e. both particles are eventually absorbed in the same site. 

Recall that in a segment weighted by symmetric conductances $\omega_{x,x+1}$, harmonic functions at $z$ are given by 
$$\sum_{i=0}^{z-1}\frac{1}{\omega_{i,i+1}}+\text{constant}.$$
Thus, in our setting we set 
\begin{equation}\label{eq: harmonic function}
\begin{cases}
h(0):=0\\
h(x):=\frac{1}{\omega_L}+x-1, \quad x\in\{1,\ldots,N-1\}\\
h(N):= \frac{1}{\omega_L}+\frac{1}{\omega_R}+N-2
\end{cases}
\end{equation}
and the absorption probability on the point $N$ of a particle starting from $x\in[N]_0$ is then given by 
$$\hat \P_{\{x\}}[\xi_\infty(N)=1]=\frac{h(x)}{h(N)}.$$

 In order to compute  the absorption probabilities we take advantage of three martingales with respect to the natural filtration generated by $\{X^x_n, X^y_n\}_{n\in \N}$ which are given in  Lemma  below.
 
\begin{lemma}\label{proposition: martingale SSEP} Let $x<y$ and define the following process
	\begin{align*}
		D_0=0
	\end{align*}
	\begin{multline}
		D_n:=\sum_{k=0}^{n-1}\1_{\{X^y_k-X^x_k=1\}}\1_{ \{X^y_k,X^x_k\notin\{0,N\}\}}\\ \times\left(  \1_{ \{X^y_k,X^x_k\notin\{1,N-1\}\}} +\frac{2}{\omega_L+1}\1_{ \{X^x_k=1\}}   +\frac{2}{\omega_R+1}\1_{ \{X^y_k=N-1\}}           \right).
	\end{multline}
Then the processes
 	\begin{align}\label{eq: martingale sum SSEP}
 	(h(X^x_n) + h(X^y_n))_{n\ge 0},
 	\end{align}
 	\begin{align}
(h(X^y_n) -h(X^x_n)-D_n)_{n\ge 0}
 	\end{align}
 	and
 	\begin{align}
 	(h(X^x_n)h(X^y_n)+ \frac{1}{2}D_n)_{n\ge 0}
 	\end{align}
 	are martingales with respect to the filtration $\mathcal F_n:=\sigma \{  X^x_k, X^y_k, k\in \{0,\ldots, n\}\}$. 
 \end{lemma}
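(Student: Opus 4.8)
The plan is to verify the martingale property of each of the three processes directly, by computing the one-step conditional expectation $\E[\,\cdot\mid\mathcal F_n]$ from the transition rules of the skeleton chain $(X^x_n,X^y_n)$ and checking that the claimed increment vanishes (or is compensated by the increment of $D_n$). Since the dual particles perform simple random walk steps weighted by the conductances, with reflection/absorption at the boundary and with the exclusion-induced no-swap rule, the analysis splits naturally according to the relative position of the two particles: (i) both particles in the bulk $\{1,\dots,N-1\}$ at distance $\ge 2$ apart and not adjacent to a reservoir site; (ii) both in the bulk but at distance exactly $1$ (the interacting case); (iii) one or both particles adjacent to a reservoir edge (sites $1$ or $N-1$), where the jump rate to the absorbing site is $\omega_L$ or $\omega_R$ rather than $1$; (iv) one or both particles already absorbed at $\{0,N\}$, where nothing moves. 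The key observation making everything work is that $h$ as defined in \eqref{eq: harmonic function} is harmonic for the \emph{single}-particle skeleton chain on the extended segment, including at the boundary sites, precisely because of the choice of increments $1/\omega_L$ and $1/\omega_R$; this is the content of the formula $\hat\P_{\{x\}}[\xi_\infty(N)=1]=h(x)/h(N)$ recalled just above.

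First I would treat \eqref{eq: martingale sum SSEP}, the sum $h(X^x_n)+h(X^y_n)$. For a non-interacting step (particles far apart, or one absorbed) this is immediate from single-particle harmonicity of $h$ applied to whichever particle moves. For the interacting case $X^y_k-X^x_k=1$ one must check that the \emph{joint} move — where a clock on the shared edge, when it rings, would try to swap but the move is suppressed, so effectively the pair of adjacent particles behaves like a single rigid block that can step left (if the left particle's outer edge rings) or right (if the right particle's outer edge rings) — still preserves $\E[h(X^x)+h(X^y)]$. Because $h$ is affine with slope $1$ on the bulk, moving the block left by one decreases $h(X^x)+h(X^y)$ by $2$ with the appropriate probability and moving right increases it by $2$; the rates being symmetric (both equal to $1$, or suitably modified near a reservoir), the expected increment is $0$. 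This handles \eqref{eq: martingale sum SSEP}, and the only subtlety is bookkeeping the boundary-adjacent sub-cases, where the relevant edge rate is $\omega_L$ or $\omega_R$ but $h$'s increment there is correspondingly $1/\omega_L$ or $1/\omega_R$, so the product (rate $\times$ increment) is again balanced.

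Next I would handle the difference $h(X^y_n)-h(X^x_n)-D_n$ and the product $h(X^x_n)h(X^y_n)+\tfrac12 D_n$ together, since both require identifying the ``defect'' that arises \emph{only} in the interacting configuration $X^y_k-X^x_k=1$ and checking that $D_n$ exactly compensates it. In every non-interacting step, $X^y-X^x$ and $h(X^y)h(X^x)$ each have zero expected increment by single-particle harmonicity (for the product, because the two particles move independently and $h$ is harmonic for each), and $D_n$ does not increment, so there is nothing to prove. In an interacting step at a bulk position $k$ with neither particle at $1$ or $N-1$: the ordered pair stays ordered (swap suppressed), the gap $X^y-X^x$ stays equal to $1$ after a block move but the two $h$-values shift together, so $h(X^y)-h(X^x)$ is actually unchanged with probability $2/(2)=1$ under block moves — wait, the subtlety is that there is also the possibility that the \emph{middle} edge rings, which does nothing; one must carefully enumerate which of the (up to) three relevant Poisson clocks fire and with what probability in the embedded chain, i.e. normalize the rates. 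The upshot, which I would verify by the explicit enumeration, is that the expected one-step increment of $h(X^y)-h(X^x)$ in an interacting bulk configuration equals $+1$ times the corresponding indicator weight in the definition of $D_n$ (namely $\1_{\{X^y_k,X^x_k\notin\{1,N-1\}\}}$, or $2/(\omega_L+1)$ when $X^x_k=1$, or $2/(\omega_R+1)$ when $X^y_k=N-1$), so subtracting $D_n$ restores the martingale property; and simultaneously the expected increment of $h(X^x)h(X^y)$ in that same configuration is $-\tfrac12$ times the same weight, so adding $\tfrac12 D_n$ restores that martingale property as well. The boundary sub-cases (one particle at site $1$ adjacent to the absorbing site $0$, or at $N-1$ adjacent to $N$) are where the weights $2/(\omega_L+1)$ and $2/(\omega_R+1)$ come from: there the embedded chain normalizes the bulk-edge rate $1$ against the reservoir-edge rate $\omega_L$ (or $\omega_R$), so the relevant conditional probabilities pick up denominators $\omega_L+1$ (or $\omega_R+1$), and one re-does the enumeration in these cases.

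The main obstacle I anticipate is not conceptual but combinatorial: correctly enumerating, in the \emph{embedded/skeleton} chain (where one conditions on a jump actually occurring), the transition probabilities in the interacting configuration and its various boundary-adjacent specializations, and matching them term-by-term against the three indicator-weighted contributions in the definition of $D_n$. In particular one must be careful that $D_n$ only accumulates when the pair is genuinely adjacent \emph{and} both particles are still alive ($\1_{\{X^y_k,X^x_k\notin\{0,N\}\}}$), and that the three martingale identities are mutually consistent — indeed, adding the difference martingale to the sum martingale recovers $2h(X^y_n)-D_n$ up to the sum, which is a useful internal check. Once the case analysis is set up cleanly, each identity reduces to a short arithmetic verification using the affine form of $h$ and the symmetry of the conductances; I would organize the write-up as a single lemma-internal case split (bulk non-adjacent; bulk adjacent interior; adjacent with left particle at $1$; adjacent with right particle at $N-1$; at least one particle absorbed) and dispatch all three martingales within each case.
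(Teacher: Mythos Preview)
Your overall plan---split into cases by the relative positions of the two particles, invoke single-particle harmonicity of $h$ in the non-interacting cases, and compute the interaction defect in the adjacent configuration to match against the increment of $D_n$---is exactly what the paper does (the paper writes out only the product martingale in full, via the same case split you list at the end, and defers the other two to ``similar arguments'').

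One point does need correction, however: your ``rigid block'' description of the adjacent dynamics is wrong and would lead to incorrect arithmetic. When $(X^x_k,X^y_k)=(x,x+1)$ with both in the bulk interior, the skeleton chain has exactly two transitions, $(x,x+1)\to(x-1,x+1)$ and $(x,x+1)\to(x,x+2)$, each with probability $1/2$; only \emph{one} particle moves per step, so $h(X^x)+h(X^y)$ changes by $\pm 1$, not $\pm 2$ as you wrote. With the correct transitions the increments come out as: the sum has expected increment $0$ by affinity of $h$; the difference $h(X^y)-h(X^x)$ jumps from $1$ to $2$ under either transition, giving expected increment $+1$; and the product has expected increment $\tfrac12\bigl(h(x-1)h(x+1)+h(x)h(x+2)\bigr)-h(x)h(x+1)=\tfrac12(h(x)-h(x+1))=-\tfrac12$. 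These match the three $D_n$ compensators exactly. The boundary sub-cases ($X^x_k=1$ or $X^y_k=N-1$) work the same way once you normalize the skeleton rates by $\omega_L+1$ or $\omega_R+1$, producing precisely the weights $2/(\omega_L+1)$ and $2/(\omega_R+1)$, as you correctly anticipated.
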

\begin{proof}
We provide only the proof of the fact that $(h(X^x_n)h(X^y_n)+ \frac{1}{2}D_n)_{n\ge 0}$ is a martingale since for the other two processes the conclusion follows by similar arguments. 

Denote by $\hat \E_{(x,y)}$ the expectation of the skeleton chain  $\{X^x_n, X^y_n\}_{n\in \N}$. We need to show that 

\begin{align}\label{eq: martingale property for product}
	\hat \E_{(x,y)}\left[ h(X^x_n)h(X^y_n)+ \frac{1}{2}D_n | \mathcal F_{n-1}\right]= h(X^x_{n-1})h(X^y_{n-1})+ \frac{1}{2}D_{n-1}.
	\end{align}

Let us now write
\begin{align}\label{eq: 1 written as several events for product martingale}
	\nonumber1&= (1-\1_{ \{X^y_{n-1},X^x_{n-1}\notin\{0,N\}\}} )\\\nonumber &+ \1_{ \{X^y_{n-1},X^x_{n-1}\notin\{0,N\}\}} \left(\1_{\{X^y_{n-1}-X^x_{n-1}\ne1\}}\right.\\\nonumber &\left.+ \1_{\{X^y_{n-1}-X^x_{n-1}=1\}} \left( \1_{ \{X^y_{n-1},X^x_{n-1}\notin\{1,N-1\}\}}\right.\right.\\&\left.\left. +\frac{1}{\omega_L+1}\1_{ \{X^x_{n-1}=1\}}   +\frac{1}{\omega_R+1}\1_{ \{X^y_{n-1}=N-1\}}  \right)\right).
	\end{align}
First notice that if at time $n-1$ both particles are absorbed, then no extra jumps occur. If only one particle is absorbed at time $n-1$  then $D_{n-1}=D_n$, the particle not absorbed is an independent random walk and because $h$ is harmonic, we conclude  \begin{multline}\hat \E_{(x,y)}\left[(1-\1_{ \{X^y_{n-1},X^x_{n-1}\notin\{0,N\}\}} ) (h(X^x_n)h(X^y_n)+ \frac{1}{2}D_n) | \mathcal F_{n-1}\right]\\=(1-\1_{ \{X^y_{n-1},X^x_{n-1}\notin\{0,N\}\}} ) (h(X^x_{n-1})h(X^y_{n-1})+ \frac{1}{2}D_{n-1}).\end{multline}
Consider now 
$$\hat \E_{(x,y)}\left[ \1_{ \{X^y_{n-1},X^x_{n-1}\notin\{0,N\}\}} \1_{\{X^y_{n-1}-X^x_{n-1}\ne1\}}(h(X^x_n)h(X^y_n)+ \frac{1}{2}D_n) | \mathcal F_{n-1}\right].$$
On the event $\1_{\{X^y_{n-1}-X^x_{n-1}\ne1\}}$ we have $D_n=D_{n-1}$ and since particles are not nearest neighbor, they behave as independent random walks an thus 
\begin{multline}
\hat\E_{(x,y)}\left[ \1_{ \{X^y_{n-1},X^x_{n-1}\notin\{0,N\}\}} \1_{\{X^y_{n-1}-X^x_{n-1}=1\}}(h(X^x_n)h(X^y_n)+\frac{1}{2} D_n) | \mathcal F_{n-1}\right]\\= \1_{ \{X^y_{n-1},X^x_{n-1}\notin\{0,N\}\}} \1_{\{X^y_{n-1}-X^x_{n-1}\ne1\}}(h(X^x_{n-1})h(X^y_{n-1})+ \frac{1}{2}D_{n-1}).
\end{multline}
Denote by $$A:=\1_{ \{X^y_{n-1},X^x_{n-1}\notin\{0,N\}\}} \1_{\{X^y_{n-1}-X^x_{n-1}=1\}}\1_{ \{X^y_{n-1},X^x_{n-1}\notin\{1,N-1\}\}}$$
and notice that in this case 
$$\hat \E_{(x,y)}\left[ AD_n | \mathcal F_{n-1}\right]=A(D_{n-1}+1).$$
Moreover
\begin{align*}
	&\hat \E_{(x,y)}\left[ Ah(X^x_n)h(X^y_n) | \mathcal F_{n-1}\right]=A\hat \E_{(X^x_{n-1},X^y_{n-1})}\left[ h(X^x_n)h(X^y_n)\right]\\
	&=A\left(  \frac{1}{2} h(X^x_{n-1}-1)h(X^y_{n-1}) + \frac{1}{2} h(X^x_{n-1})h(X^y_{n-1}+1)  \right)\\
	&= Ah(X^x_{n-1})h(X^y_{n-1})   -\frac{1}{2} A( h(X^y_{n-1})-h(X^x_{n-1}  ))\\
&= Ah(X^x_{n-1})h(X^y_{n-1})   - \frac{1}{2} A
\end{align*}
from which we obtain 

$$\hat \E_{(x,y)}\left[ A(h(X^x_n)h(X^y_n)+ \frac{1}{2}D_n)| \mathcal F_{n-1}\right]= A(h(X^x_{n-1})h(X^y_{n-1})+ \frac{1}{2}D_{n-1}).$$

Denote by $$B:=\1_{ \{X^y_{n-1},X^x_{n-1}\notin\{0,N\}\}} \1_{\{X^y_{n-1}-X^x_{n-1}=1\}}\1_{ \{X^x_{n-1}=1\}}$$
and notice that in this case 
$$\hat \E_{(x,y)}\left[ BD_n | \mathcal F_{n-1}\right]=B(D_{n-1}+\frac{2}{\omega_L+1}).$$
Moreover
\begin{align*}
	&\hat \E_{(x,y)}\left[ Bh(X^x_n)h(X^y_n) | \mathcal F_{n-1}\right]=B\hat \E_{(X^x_{n-1},X^y_{n-1})}\left[ h(X^x_n)h(X^y_n)\right]\\
	&=B\left(  \frac{\omega_L}{1+\omega_L} h(X^x_{n-1}-1)h(X^y_{n-1}) + \frac{1}{1+\omega_L} h(X^x_{n-1})h(X^y_{n-1}+1)  \right)\\
	&= 0+ B\frac{1}{1+\omega_L} \left( h(X^x_{n-1})h(X^y_{n-1}) +h(X^x_{n-1})\right) \\ 
	&= Bh(X^x_{n-1})h(X^y_{n-1})   -\frac{B}{1+\omega_L} ( h(X^y_{n-1})-h(X^x_{n-1}  ))\\
	&= Bh(X^x_{n-1})h(X^y_{n-1})   - \frac{B}{1+\omega_L}
\end{align*}
from which we obtain 

$$\hat \E_{(x,y)}\left[ B(h(X^x_n)h(X^y_n)+ \frac{1}{2}D_n)| \mathcal F_{n-1}\right]= B(h(X^x_{n-1})h(X^y_{n-1})+ \frac{1}{2}D_{n-1}).$$

Denote by $$C:=\1_{ \{X^y_{n-1},X^x_{n-1}\notin\{0,N\}\}} \1_{\{X^y_{n-1}-X^x_{n-1}=1\}}\1_{ \{X^y_{n-1}=N-1\}}$$
and notice that in this case 
$$\hat \E_{(x,y)}\left[ CD_n | \mathcal F_{n-1}\right]=C(D_{n-1}+\frac{2}{\omega_R+1}).$$
Moreover
\begin{align*}
	&\hat \E_{(x,y)}\left[ Ch(X^x_n)h(X^y_n) | \mathcal F_{n-1}\right]=C\hat \E_{(X^x_{n-1},X^y_{n-1})}\left[ h(X^x_n)h(X^y_n)\right]\\
	&=C\left(  \frac{1}{1+\omega_R} h(X^x_{n-1}-1)h(X^y_{n-1}) + \frac{\omega_R}{1+\omega_R} h(X^x_{n-1})h(X^y_{n-1}+1)  \right)\\
	&= \frac{C}{1+\omega_R}\left[ \left( h(X^x_{n-1})h(X^y_{n-1}) -h(X^y_{n-1})\right) + \omega_R \left( h(X^x_{n-1})h(X^y_{n-1}) +\frac{1}{\omega_R}h(X^x_{n-1})\right)\right] \\ 
	&= Ch(X^x_{n-1})h(X^y_{n-1})   -\frac{C}{1+\omega_R} ( h(X^y_{n-1})-h(X^x_{n-1}  ))\\
	&= Ch(X^x_{n-1})h(X^y_{n-1})   - \frac{C}{1+\omega_R}
\end{align*}
from which we obtain 

$$\hat \E_{(x,y)}\left[ C(h(X^x_n)h(X^y_n)+ \frac{1}{2}D_n)| \mathcal F_{n-1}\right]= C(h(X^x_{n-1})h(X^y_{n-1})+ \frac{1}{2}D_{n-1}).$$
Putting everything together and recalling \eqref{eq: 1 written as several events for product martingale} we obtain \eqref{eq: martingale property for product}, and thus that the process $(h(X^x_n)h(X^y_n)+ \frac{1}{2}D_n)_{n\ge 0}$ is a martingale.

\end{proof}

 We can thus prove Proposition \ref{proposition: 2 point correlation SSEP}.
 \begin{proof}[Proof of Proposition \ref{proposition: 2 point correlation SSEP}]
 By employing the martingales of Lemma \ref{proposition: martingale SSEP} above and  sending $n$ to infinity we obtain (by Doob's optional sampling theorem), for $x < y$,
 \begin{align}
 \begin{cases}
 h(x)+h(y)=2h(N)\hat \P_{\{x,y\}}[\xi_\infty(N)=2] + h(N)\hat \P_{\{x,y\}}[\xi_\infty(N)=1]\\
 h(y)-h(x)=h(N)\hat \P_{\{x,y\}}[\xi_\infty(N)=1]-\hat \E_{(x,y)}[D_\infty]\\
 h(x)h(y)=h(N)^2\hat \P_{\{x,y\}}[\xi_\infty(N)=1]+ \frac{1}{2}\hat \E_{(x,y)}[D_\infty]
 \end{cases}
 \end{align}
 and thus
 \begin{align}\label{eq: PxyNN SSEP}
 \hat\P_{\{x,y\}}[\xi_\infty(N)=2] =\frac{h(x)(h(y)-1)}{h(N)(h(N)-1)},
 \end{align}

 \begin{align}\label{eq: Pxy0N SSEP}
  \hat\P_{\{x,y\}}[\xi_\infty(N)=1] =  \frac{h(x)+h(y)}{h(N)}-2\frac{h(x)(h(y)-1)}{h(N)(h(N)-1)}
 \end{align}
 and 
  \begin{align}\label{eq: Pxy00 SSEP}
 \nonumber\hat\P_{\{x,y\}}[\xi_\infty(N)=0]& =1-  \hat\P_{\{x,y\}}[\xi_\infty(N)=2]-\hat\P_{\{x,y\}}[\xi_\infty(N)=1]\\
 &=1-\frac{h(x)+h(y)}{h(N)} +\frac{h(x)(h(y)-1)}{h(N)(h(N)-1)}.
 \end{align}
 Writing
 \begin{multline*}
 \hat \E_{\mus}[\eta(x)\eta(y)]=\rho_L^2\hat\P_{\{x,y\}}[\xi_\infty(N)=0] \\+ \rho_L\rho_R  \hat\P_{\{x,y\}}[\xi_\infty(N)=1]  + \rho_R^2 \hat\P_{\{x,y\}}[\xi_\infty(N)=2] 
 \end{multline*}
 and 
\begin{align*}& \hat \E_{\mus}[\eta(x)]\hat \E_{\mus}[\eta(y)]\\&=\rho_R^2\hat \P_{\{x\}}[\xi_\infty(N)=1]\hat \P_{\{y\}}[\xi_\infty(N)=1]\\&+\rho_R\rho_L(\hat \P_{\{x\}}[\xi_\infty(N)=0]\hat \P_{\{y\}}[\xi_\infty(N)=1]+\hat \P_{\{x\}}[\xi_\infty(N)=1]\hat \P_{\{x\}}[\xi_\infty(N)=0])\\&+\rho_L^2\hat \P_{\{x\}}[\xi_\infty(N)=0]\hat \P_{\{x\}}[\xi_\infty(N)=0]
\end{align*}
 and recalling  that $\hat \P_{\{x\}}[\xi_\infty(N)=1]=\frac{h(x)}{h(N)}$, we finally obtain, using \eqref{eq: PxyNN SSEP}, \eqref{eq: Pxy0N SSEP} and \eqref{eq: Pxy00 SSEP}, the desired identity.
\end{proof}
 
 \subsection{Absorption probabilities via induction}\label{subsection: induction}
 In the proof above we showed that for $x<y$,
  \begin{align}
 	\hat\P_{\{x,y\}}[\xi_\infty(N)=2] &=\frac{h(x)(h(y)-1)}{h(N)(h(N)-1)}\\
 		&= \frac{\frac{1}{\omega_L}+x-1}{\frac{1}{\omega_L}+\frac{1}{\omega_R}+ (N-1)-1}\frac{\frac{1}{\omega_L}+y-2}{\frac{1}{\omega_L}+\frac{1}{\omega_R}+ (N-1)-2}.
 \end{align}
From the formula above we can make a guess for the absorption probability that starting with $k$ particles all of them are absorbed at $N$ and then prove that our guess is correct by induction.
\begin{lemma}\label{lemma: abs prob conductances at boundary}
Recall the function $h$ given in \eqref{eq: harmonic function}. Then, given $1\le x_1<\ldots <x_k\le N-1$,
 \begin{align}
	\hat\P_{\{x_1,\ldots,x_k\}}[\xi_\infty(N)=k]=\prod_{i=1}^k \frac{h(x_i)-(i-1)}{h(N)-(i-1)}.
\end{align}
\end{lemma}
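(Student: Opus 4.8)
The plan is to prove the formula by induction on the number $k$ of dual particles, identifying the absorption probability with the unique bounded solution of the first‑step (harmonicity) equations of the $k$‑particle skeleton chain. For a sorted tuple $\boldsymbol x=(x_1<\dots<x_k)$ with entries in $[N]_0$ write
\[
P_k(\boldsymbol x):=\hat\P_{\{x_1,\dots,x_k\}}\bigl(\xi_\infty(N)=k\bigr),\qquad
g_k(\boldsymbol x):=\prod_{i=1}^{k}\frac{h(x_i)-(i-1)}{h(N)-(i-1)},
\]
with the convention that particles sitting at $0$ or $N$ are already absorbed. Since the dual skeleton chain (no swapping) preserves the relative order of the particles, an absorbed‑at‑$0$ particle is always leftmost and an absorbed‑at‑$N$ particle always rightmost, so the \emph{first} absorption event already puts the configuration into $\mathcal B:=\{x_1=0\}\cup\{x_k=N\}$, which is hit in finite time almost surely. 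Decomposing on the first jump, $P_k$ is the unique bounded function satisfying: $P_k(\boldsymbol x)=0$ when $x_1=0$; $P_k(\boldsymbol x)=P_{k-1}(x_1,\dots,x_{k-1})$ when $x_k=N$; and, for interior $\boldsymbol x$ (all $x_i\in\{1,\dots,N-1\}$),
\[
\sum_{i=1}^{k}\Bigl[r_i^-\bigl(P_k(\boldsymbol x^{i,-})-P_k(\boldsymbol x)\bigr)+r_i^+\bigl(P_k(\boldsymbol x^{i,+})-P_k(\boldsymbol x)\bigr)\Bigr]=0,
\]
where $\boldsymbol x^{i,\pm}$ is $\boldsymbol x$ with $x_i$ replaced by $x_i\pm1$, and $r_i^-$ (resp.\ $r_i^+$) is the rate for particle $i$ to jump left (resp.\ right): it is $0$ if that move is blocked by a neighbouring particle, and otherwise equals the conductance of the traversed edge ($1$ in the bulk, $\omega_L$ across $\{0,1\}$, $\omega_R$ across $\{N-1,N\}$). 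Uniqueness holds because, for two bounded solutions, their difference evaluated along the chain stopped at the hitting time of $\mathcal B$ is a bounded martingale that vanishes at that stopping time (by the two boundary relations), so optional stopping forces the solutions to agree.

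It therefore suffices to check that $g_k$ solves the same three relations, using the inductive hypothesis $P_{k-1}=g_{k-1}$; the base case $k=1$ is precisely $\hat\P_{\{x\}}(\xi_\infty(N)=1)=h(x)/h(N)$. The two boundary relations are immediate: $g_k(\boldsymbol x)=0$ when $x_1=0$ since $h(0)=0$, and when $x_k=N$ the $k$‑th factor equals $1$, giving $g_k(x_1,\dots,x_{k-1},N)=g_{k-1}(x_1,\dots,x_{k-1})$. The interior relation is the heart of the matter. Dividing by $g_k(\boldsymbol x)>0$ and using that replacing $x_i$ by $x_i\pm1$ changes only the $i$‑th factor, the identity to prove becomes
\[
\sum_{i=1}^{k}\frac{1}{h(x_i)-(i-1)}\Bigl[r_i^-\bigl(h(x_i-1)-h(x_i)\bigr)+r_i^+\bigl(h(x_i+1)-h(x_i)\bigr)\Bigr]=0.
\]

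To prove this, partition the particles into maximal blocks of consecutively occupied sites. A particle forming a singleton block is free on both sides, and since $h$ is harmonic for a single dual particle at every site of $\{1,\dots,N-1\}$ (including at $1$ and $N-1$, for the weights $\omega_L,\omega_R$), its bracket vanishes. For a block occupying sites $x_j,x_j+1,\dots,x_j+m$ with $m\ge1$, the interior particles are blocked on both sides and contribute nothing, so only particles $j$ and $j+m$ remain, with combined contribution
\[
\frac{r_j^-\bigl(h(x_j-1)-h(x_j)\bigr)}{h(x_j)-(j-1)}+\frac{r_{j+m}^+\bigl(h(x_j+m+1)-h(x_j+m)\bigr)}{h(x_j+m)-(j+m-1)}.
\]
Here the ``unit current'' property of $h$ — namely $\omega_L\bigl(h(1)-h(0)\bigr)=\omega_R\bigl(h(N)-h(N-1)\bigr)=1$ together with $h(x)-h(x-1)=1$ on bulk edges — makes the two numerators equal to $-1$ and $+1$, while $h$ increasing by $1$ on $\{1,\dots,N-1\}$ gives $h(x_j+m)-(j+m-1)=h(x_j)-(j-1)$; hence the block contributes $0$. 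Summing over all blocks yields the identity, so $g_k=P_k$, and restricting to $1\le x_1<\dots<x_k\le N-1$ gives the lemma.

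The step I expect to be the main obstacle is precisely this interior harmonicity identity, and in particular the boundary bookkeeping: one must check that $h$ really is harmonic at sites $1$ and $N-1$ for the conductances $\omega_L,\omega_R$, and that the normalisation $\omega_L(h(1)-h(0))=\omega_R(h(N)-h(N-1))=1$ makes the two endpoint contributions of every block cancel whether or not the block abuts site $1$ or site $N-1$ (the extreme case being the single block $x_j=1,\dots,x_{j+m}=N-1$, where the two contributions are $-\omega_L$ and $+\omega_L$). Everything else — the a.s.\ absorption, the first‑step decomposition, uniqueness via optional stopping, and the order preservation of the dual — is soft.
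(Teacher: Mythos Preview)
Your proof is correct and follows the same overall strategy as the paper: induction on $k$, showing that $g_k$ is harmonic for the $k$-particle dual generator and matches the boundary data, hence coincides with the absorption probability. The difference lies in how the interior identity $\hat{\mathcal L}\,g_k=0$ is verified. The paper peels off the rightmost particle, writing $g_k=g_{k-1}\cdot\frac{h(x_k)-(k-1)}{h(N)-(k-1)}$, and uses the inductive hypothesis $\hat{\mathcal L}\,g_{k-1}=0$ together with a short case split on whether $x_k=x_{k-1}+1$ (and whether $x_k=N-1$). You instead prove harmonicity directly by the block decomposition, exploiting the unit-current identity $\omega_{x-1,x}\bigl(h(x)-h(x-1)\bigr)=1$ so that the two live endpoints of every maximal block cancel; the inductive hypothesis is then invoked only at the boundary $x_k=N$. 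Your route makes the cancellation mechanism transparent and handles all adjacency patterns at once, while the paper's route is terser but buries the structure inside the inductive appeal. You are also more explicit than the paper about uniqueness (optional stopping at the first absorption time), which the paper leaves implicit in the phrase ``matches the boundary conditions''.
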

%from which one can make the guess that, given $x_1<\ldots<x_k$, then 
%We can prove that the above formula is correct by induction.
\begin{proof}
First notice that the formula matches the boundary conditions of the absorption probability.
Second, assume that the formula holds for $k-1$ and let us show that 
$$\hat{\mathcal L} \left(\prod_{i=1}^k \frac{h(x_i)-(i-1)}{h(N)-(i-1)}\right) =0.$$

Define 
$$g_k(x_1,\ldots,x_k):=\prod_{i=1}^k \frac{h(x_i)-(i-1)}{h(N)-(i-1)}.$$
If $x_{k}>x_{k-1}+1$ then we can apply the dual generator in the following way 
\begin{align*}
	&\hat{\mathcal L} g_k(x_1,\ldots,x_k)\\& =  \left(\hat{\mathcal L} g_{k-1}(x_1,\ldots,x_{k-1}) \right) \frac{h(x_k)-(k-1)}{h(N)-(k-1)} \\&+ g_{k-1}(x_1,\ldots,x_{k-1}) \hat{\mathcal L} \left(\frac{h(x_k)-(k-1)}{h(N)-(k-1)} \right)   =0
\end{align*}
where the last equality follows by the induction assumption and the fact that $h$ is harmonic for $\hat{\mathcal L}$.
If $x_{k}=x_{k-1}+1$ and $x_{k+1}\ne N-1$ then we can apply the generator in the following way 
\begin{align*}
	&\hat{\mathcal L} \left(\prod_{i=1}^k \frac{h(x_i)-(i-1)}{h(N)-(i-1)}\right)\\& =  \left(\hat{\mathcal L} g_{k-1}(x_1,\ldots,x_{k-1}) \right) \frac{h(x_k)-(k-1)}{h(N)-(k-1)} \\&-g_{k-2}(x_1,\ldots,x_{k-2}) \left(\frac{h(x_{k-1}+1)-(k-2)}{h(N)-(k-2)}-\frac{h(x_{k-1})-(k-2)}{h(N)-(k-2)}\right)\frac{h(x_k)-(k-1)}{h(N)-(k-1)} \\&+g_{k-1}(x_1,\ldots,x_{k-1}) \left(\frac{h(x_{k}+1)-(k-1)}{h(N)-(k-1)}-\frac{h(x_{k})-(k-1)}{h(N)-(k-1)}\right) =0
\end{align*}
where the first term on the right hand side is zero by induction and the second term, which appears because in the first term of the right hand side we applied the generator as if the $k+1$ particle was not present in the system, cancel the third term. Similarly, if $x_{k}=x_{k-1}+1$ and $x_{k+1}= N-1$ the difference is that the third term on the right hand side of the computation above is multiplied by $\omega_R$ which however cancel with $h(x_k+1)-h(x_k)=\frac{1}{\omega_R}$ and thus again $\hat{\mathcal L} \left(\prod_{i=1}^k \frac{h(x_i)-(i-1)}{h(N)-(i-1)}\right)=0$ concluding the proof.
\end{proof}
 \section{n-point correlations via the \textit{Ninja} particle method}\label{section: Ninja method}
 The matrix ansatz method of \cite{derrida_exact_1993-1} allow to derive a recursive relation for the correlations of the boundary driven SSEP (see \cite[(A.7)]{derrida_entropy_2007}). As pointed out in \cite[Section 7.1]{carinci_consistency}, from such relation satisfied by the correlations, an analogous relation for the absorption probabilities in the dual system follows. However, so far a probabilistic approach alternative to the matrix ansatz method has not been proposed.

 In this section we  provide a probabilistic route to compute the absorption probabilities of the dual process by building a coupling between two dual boundary driven SSEP, one evolving on $[N]_0$ and the other one on $[N-1]_0$ from which we deduce a recursive relation for the absorption probabilities in the two systems. In the following we consider the homogeneous segment, i.e. $\{x,y\}\in E$ if and only if $|x-y|=1$, $\omega_{x,x+1}=1$ for each $x\in\{1,\ldots,N-2\}\in E$ and we put $\omega_L=\omega_R=1$ as well. We denote by  $(\xi^{[M]}_t)_{t\ge 0}$  the dual boundary driven SSEP  on the homogeneous segment $[M]_0$ where $\{0,M\}$ are absorbing sites and by $\hat \P^{[M]_0}_{\{x_1,\ldots,x_k\}}$ the law of $(\xi^{[M]}_t)_{t\ge 0}$ starting from $\sum_{i=1}^k \delta_{x_i}$. Moreover we denote by $\hat {\mathcal L}^{[M]_0}$ the generator of  $(\xi^{[M]}_t)_{t\ge 0}$.

The main result of the section is the following theorem.
 
 \begin{theorem}\label{theorem: recursion absorption probabilities via Ninja}
For all  $0\le x_1<\ldots<x_{k+1}\le N$ the following relation holds 
 %between the absorption probabilities of $(\xi^{[N]}_t)_{t\ge 0}$  and $(\xi^{[N-1]}_t)_{t\ge 0}$:
 \begin{align}
\hat \P^{[N]_0}_{\{x_1,\ldots,x_{k+1}\}}(\xi^{[N]_0}_\infty(N)=k+1)=\left(\frac{x_{k+1}-k}{N}\right) \ \hat \P^{[N-1]_0}_{\{x_1,\ldots,x_k\}}(\xi^{[N-1]_0}_\infty(N)=k).
 \end{align}
% where  \begin{align}
% \hat \P^{[N]_0}_{x_1,\ldots,x_k}=\hat \P_{\{x_1,\ldots,x_k\}}(\xi^{[N]_0}_\infty(N)=k).
% \end{align}
 \end{theorem}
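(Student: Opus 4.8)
The plan is to prove the recursion by coupling the labelled stirring representations of the two dual processes, the one on $[N]_0$ started from $\{x_1,\dots,x_{k+1}\}$ and the one on $[N-1]_0$ started from $\{x_1,\dots,x_k\}$, so that the event on the left-hand side splits into an event about a single ``ninja'' walker and the corresponding event for the smaller system.

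First I would isolate a purely deterministic feature of the stirring construction on $[N]_0$: on the event $\{\xi^{[N]_0}_\infty(N)=k+1\}$ the rightmost particle $X^{x_{k+1}}$ — the ninja — is the first of the $k+1$ particles to leave the bulk, and it leaves at $N$. Indeed, any non-rightmost particle can reach $N-1$, hence $N$, only after every particle to its right has already been absorbed at $N$, and a particle absorbed at $0$ would destroy the event. Hence, until the ninja's absorption time, the $k$ other particles sit strictly to its left and the sites $\{X^{x_{k+1}}_t+1,\dots,N-1\}$ remain empty.

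The core step is then to build, on one probability space, the stirring representation on $[N]_0$ from $\{x_1,\dots,x_{k+1}\}$ together with a stirring representation on $[N-1]_0$ from $\{x_1,\dots,x_k\}$, by \emph{contracting the ninja away}: while the ninja occupies a bulk site one deletes a single empty site to its right and identifies the resulting shorter lattice with $[N-1]_0$, so that the $k$ non-ninja particles read off a bona fide state of the $[N-1]_0$-exclusion process and the bulk Poisson clocks are matched, the one clock of $[N]_0$ not consumed in this identification driving the ninja's absorption at $N$. Under this construction the reduced coordinate of the ninja — its position minus the number of particles to its left, equal to $x_{k+1}-k$ at time $0$ — performs a simple symmetric random walk on $\{0,1,\dots,N\}$ absorbed at the endpoints, with $N$ corresponding to the ninja being absorbed at $N$ and $0$ to the fatal event that some particle is absorbed at $0$; and this walk is independent of the $[N-1]_0$-trajectory read off from the remaining particles, because the two are functions of disjoint families of clocks. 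Consequently $\{\xi^{[N]_0}_\infty(N)=k+1\}$ is exactly the intersection of $\{$the ninja's walk is absorbed at $N\}$ and $\{$the $[N-1]_0$-dual has all $k$ particles absorbed at its right endpoint $N-1\}$, so that
\[
\hat\P^{[N]_0}_{\{x_1,\dots,x_{k+1}\}}\big(\xi^{[N]_0}_\infty(N)=k+1\big)=\frac{x_{k+1}-k}{N}\,\hat\P^{[N-1]_0}_{\{x_1,\dots,x_k\}}\big(\xi^{[N-1]_0}_\infty(N-1)=k\big),
\]
the first factor being the gambler's-ruin probability that a symmetric walk on $N+1$ sites, started from $x_{k+1}-k$, hits the right end before the left.

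The main obstacle is making this contraction precise and verifying the two distributional claims together with the independence. The delicate moments are (i) when the ninja becomes adjacent to the next particle $X^{x_k}$, where the exclusion rule couples their clocks and one must check that the contracted move is the correct $[N-1]_0$-transition while the ninja's reduced coordinate still takes a symmetric step (or stands still, consistently with the time-change implicit in the identification of clocks), and (ii) the instant the ninja is absorbed at $N$, where the reserved empty region must be released and the remaining $k$ particles handed over to the full $[N-1]_0$-dynamics without disturbing their law. Once the coupling is arranged so that the ninja's reduced walk and the $[N-1]_0$-trajectory are measurable with respect to disjoint clock families, the displayed recursion follows, and iterating it down to a single walker recovers Theorem~\ref{theorem: absorbtion probability SSEP conductances}.
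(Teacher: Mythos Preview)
Your overall strategy --- single out a ``ninja'' particle, contract it away to produce the $[N-1]_0$-system, and extract the factor $(x_{k+1}-k)/N$ from the ninja's motion --- is exactly the paper's route. But the proposal has a genuine gap at its core and an internal inconsistency that breaks the arithmetic. Your opening deterministic claim (the rightmost labelled particle is the first to leave the bulk, and leaves at $N$) relies on the \emph{ordered}, non-swap labelling, in which the ninja always has all $k$ other particles strictly to its left. In that labelling your ``reduced coordinate'' (position minus number to the left) is simply $X^{x_{k+1}}_t-k$, which lives on $\{0,\dots,N-k\}$, not on $\{0,\dots,N\}$; gambler's ruin on that interval yields $(x_{k+1}-k)/(N-k)$, not the factor you want. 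To make the range $\{0,\dots,N\}$ you need a construction in which the ninja can be overtaken by the others --- and then your first-absorbed argument evaporates. Your contraction (``delete an empty site to the right of the ninja'') is also not specified: which site, and how does it move as the ninja moves?

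More seriously, the independence you invoke is precisely what the paper does \emph{not} prove and does not need. The paper's Ninja is given special passing rules (a normal particle adjacent to the Ninja hops over it, displacing the Ninja by one; the Ninja can even be pulled back out of $\{0,N\}$); one then checks that the unlabelled configuration is the $[N]_0$-dual while erasing the Ninja's own site via $\pi(x,y)=x-\mathbf 1_{\{y<x\}}$ yields the $[N-1]_0$-dual. This gives
\[
\hat\P^{[N-1]_0}_{\{x_1,\dots,x_k\}}(\text{all at }N-1)=\hat\P^{[N]_0}_{\{x_1,\dots,x_{k+1}\}}(\text{all at }N)+\P^{\textit{Ninja}}(\textit{Ninja}_\infty=0,\;E),
\]
with $E=\{$the $k$ non-Ninja particles all end at $N\}$. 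The factor $(x_{k+1}-k)/N$ then comes from the \emph{conditional} probability $\P(\textit{Ninja}_\infty=0\mid E)=1-(x_{k+1}-k)/N$, obtained by showing that $\bar M_n:=N-\textit{Ninja}_n+\#\{i:X^{(i)}_n<\textit{Ninja}_n\}$ has constant expectation under the conditioning (using that $\P(E)$ does not depend on the Ninja's starting site). Your ``disjoint clock families'' picture would, if it could be made to work, give a stronger statement; but at the very moments you yourself flag as delicate, the clocks of the ninja and its neighbour are manifestly entangled, so independence cannot be read off from a partition of Poisson processes. Replacing the unproven independence by the conditional-expectation argument above is what actually closes the proof.
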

Recalling that $\hat \P^{[N]_0}_{\{x\}} (\xi^{[N]_0}_\infty(N)=1)=\frac{x}{N}$, as a direct consequence of the above theorem, we obtain the explicit absorption probabilities.
 \begin{corollary}
Let $0< x_1<\ldots<x_{n}< N$, then
\begin{align}\label{eq: n points absorption prob}
\hat \P^{[N]_0}_{\{x_1,\ldots,x_{k}\}} (\xi^{[N]_0}_\infty(N)=k)=\prod_{i=1}^k \frac{x_i-(i-1)}{N-(i-1)}.
\end{align} 
 \end{corollary}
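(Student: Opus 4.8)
The plan is to prove the formula by induction on the number of particles, the essential structural point being that the inductive hypothesis must be stated uniformly over all segment lengths, since the recursion of Theorem~\ref{theorem: recursion absorption probabilities via Ninja} simultaneously removes one particle \emph{and} shrinks the segment from $[N]_0$ to $[N-1]_0$. Concretely, for each $k\ge 1$ I would take as inductive statement $P(k)$: for every integer $M>k$ and every $0<x_1<\cdots<x_k<M$,
\[
\hat\P^{[M]_0}_{\{x_1,\ldots,x_k\}}\bigl(\xi^{[M]_0}_\infty(M)=k\bigr)=\prod_{i=1}^k \frac{x_i-(i-1)}{M-(i-1)}.
\]
The base case $P(1)$ is immediate from the single-particle absorption formula $\hat\P^{[M]_0}_{\{x\}}(\xi^{[M]_0}_\infty(M)=1)=x/M$ recalled just before the statement, which is exactly the $k=1$ instance of the claimed product.

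For the inductive step, assuming $P(k)$, I would fix $0<x_1<\cdots<x_{k+1}<N$ and apply Theorem~\ref{theorem: recursion absorption probabilities via Ninja} to peel off the rightmost particle,
\[
\hat\P^{[N]_0}_{\{x_1,\ldots,x_{k+1}\}}\bigl(\xi^{[N]_0}_\infty(N)=k+1\bigr)=\frac{x_{k+1}-k}{N}\;\hat\P^{[N-1]_0}_{\{x_1,\ldots,x_k\}}\bigl(\xi^{[N-1]_0}_\infty(N-1)=k\bigr),
\]
the right endpoint of the shortened segment $[N-1]_0$ being $N-1$. Before invoking $P(k)$ I would verify its hypotheses on this shorter segment: since $x_{k+1}\le N-1$ forces $x_k\le N-2$, the positions $x_1<\cdots<x_k$ still lie strictly between $0$ and $N-1$, so $P(k)$ applies with $M=N-1$ and yields $\prod_{i=1}^k \frac{x_i-(i-1)}{(N-1)-(i-1)}=\prod_{i=1}^k \frac{x_i-(i-1)}{N-i}$.

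It then remains to reconcile the two expressions algebraically. Substituting gives $\frac{x_{k+1}-k}{N}\prod_{i=1}^k \frac{x_i-(i-1)}{N-i}$; collecting factors, the numerators assemble into $\prod_{i=1}^{k+1}(x_i-(i-1))$, the extra factor $x_{k+1}-k$ being precisely the $i=k+1$ term because $k=(k+1)-1$, while the denominators telescope via $N\cdot\prod_{i=1}^k (N-i)=\prod_{j=0}^{k}(N-j)=\prod_{i=1}^{k+1}(N-(i-1))$. This is exactly $\prod_{i=1}^{k+1}\frac{x_i-(i-1)}{N-(i-1)}$, establishing $P(k+1)$ and closing the induction. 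The only genuine point to watch — hardly an obstacle — is this denominator shift from $N-i$ back to $N-(i-1)$, which is reconciled exactly by the spare factor $1/N$ supplied by the recursion; the rest is routine index bookkeeping, and correctly propagating the uniform-in-$M$ quantifier through the segment shrinkage is what makes the single induction on $k$ go through.
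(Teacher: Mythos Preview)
Your proof is correct and is precisely the argument the paper has in mind: the corollary is stated there as ``a direct consequence'' of Theorem~\ref{theorem: recursion absorption probabilities via Ninja} together with the single-particle formula $\hat\P^{[N]_0}_{\{x\}}(\xi^{[N]_0}_\infty(N)=1)=x/N$, and you have simply spelled out the induction on $k$ (uniformly in the segment length) that this entails. Your care in carrying the quantifier over $M$ and in checking that $x_1,\ldots,x_k$ remain in the bulk of the shortened segment is exactly what is needed.
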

 In the next section we present the coupling we mentioned above. This technique relies on the introduction of a special particle, denoted by \textit{Ninja}, which has some special features reminiscent of the behavior of second class particles (see, e.g., \cite[pag. 218]{liggett_stochastic_1999}) and will be responsible for the coupling of the two processes on different graphs. 
 \subsection{The \textit{Ninja}-process: a game of labels}
 
 Consider the homogeneous segment  $[N]_0=[N-1]\cup\{0,N\}$ where $\{0,N\}$ are absorbing sites.
 
 We place $k+1$ particles initially at distinct positions $x_1,\ldots,x_{k+1}\in[N]_0$ with
 $$1\le x_1<\ldots<x_{k}\le N-1$$
 and 
 $$x_{k+1}\notin\{x_1,\ldots,x_k\}$$
 and we label by $(i)$ the particle starting from $x_i$ for all $i\in\{1,\ldots,k\}$ and  by $\textit{Ninja}$, the particle starting at $x_{k+1}$ (see Figure \ref{fig:ninjaparticle}).  We do not allow the following two cases:
 $$\exists i\in\{1,\ldots,k\} \ s.t.\ x_i=N-1\  \text{and }x_{k+1}=N$$
 and 
 $$\exists i\in\{1,\ldots,k\} \ s.t.\ x_i=1\  \text{and }x_{k+1}=0  .$$
 For our purposes we will be interested in the case $x_{k+1}>x_i$ for all $i\in\{1,\ldots,k\}$, but the process that we are going to define does not require such condition.
 We are now going to describe in words the dynamics of the process 
 $$(X^{(1)}_t,\ldots, X^{(k)}_t, \textit{Ninja}_t)_{t\ge 0}$$
 that we call \textit{Ninja}-process with law denoted by $\P^{\textit{Ninja}}_{x_1,\ldots,x_{k+1}}$.

 \begin{figure}[h]
 	\centering
 	\includegraphics[width=0.7\linewidth]{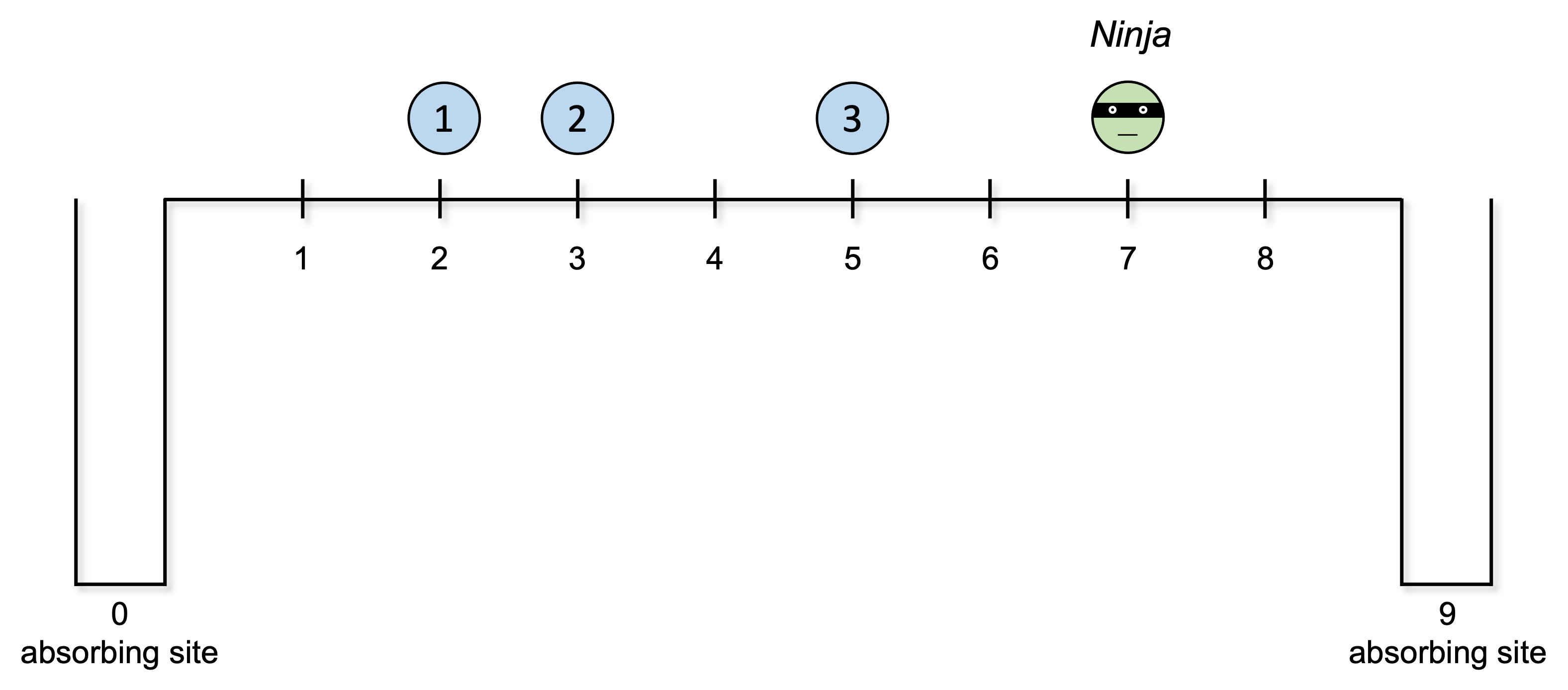}
 		\caption{}
 		\label{fig:ninjaparticle}
 \end{figure}

The particles will move in such a way that the configuration process obtained by the labelled particles evolve exactly as the usual dual of the boundary driven SSEP on $[N]_0$ but we play with the labels of the particles and more precisely with the \textit{Ninja} label in order to perform the coupling.
 More precisely:

%Each site in $[N-1]$ can be occupied by maximum one particle. Each particle has an exponential random clock with parameter $2$, when a clock rings the particle associated to that clock may perform a jump with equal probability to its left and right site, depending on the occupancy of its nearest neighboring sites. More precisely:
 
 \begin{itemize}
 	\item[\textbf{Case 1}] (The \textit{Ninja} is alone) When no particle is in a location nearest neighbor to the \textit{Ninja} and the $\textit{Ninja}\notin\{0,N\}$ each particle, including the \textit{Ninja}, behaves as  simple symmetric random walks jumping at rate one, subject to the exclusion rule and with $\{0,N\}$ acting as absorbing sites.  In this case, the \textit{Ninja} does not have nearest neighbor particles and it  jumps at rate one either to its left or to its right. % If a particle with $i\in\{1,\ldots,k\}$ is at $0$ or $N$ will stay there forever, but this is not the case for the Ninja (see \textbf{Case 3}).
 	\item[\textbf{Case 2}]  (The \textit{Ninja} interacts) Consider now the case in which  the \textit{Ninja} particle is not at $\{0,N\}$ and the $i$-th particle, with $i\in\{1,\ldots, k\}$, is nearest neighbor of the \textit{Ninja} and not in $\{0,N\}$. Then all the other particles that are not nearest neighbor of  the \textit{Ninja} behave as simple symmetric random walks jumping at rate one, subject to the exclusion rule and with $\{0,N\}$ acting as absorbing sites. 
 	On the other hand the couple $(x_i, \textit{Ninja})$ jumps at rate one to 
 	$$(x_i+2(\textit{Ninja}-x_i), x_i)$$
 	if the site $x_i+2(\textit{Ninja}-x_i)$ is empty and at rate one to 
 	$$(x_i-(\textit{Ninja}-x_i), \textit{Ninja})$$
 	 if $x_i-(\textit{Ninja}-x_i)$ is empty 
 	 (see Figures \ref{fig:ninjaparticle2} and \ref{fig:ninjaparticle_block}). Similarly if there exists another index $j\ne i$ such that $|x_j-\textit{Ninja}|=1$ and $x_j\notin\{0,N\}$, then the couple $(x_j, \textit{Ninja})$ behaves analogously to the couple $(x_i, \textit{Ninja})$.

 	 \begin{figure}[h]
 		\centering
 		\includegraphics[width=0.6\linewidth]{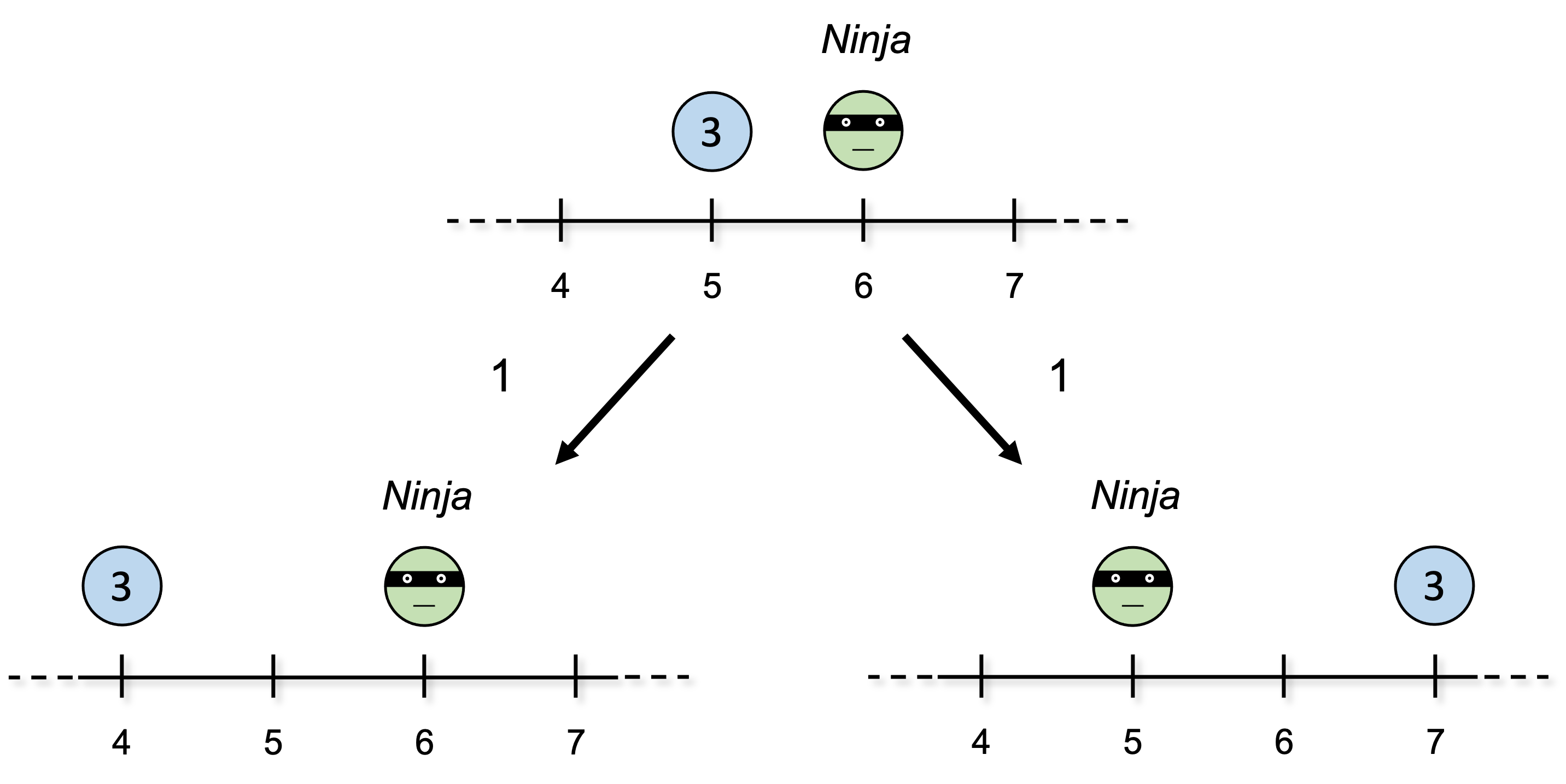}
 		\caption{}
 		\label{fig:ninjaparticle2}
 	\end{figure}
 
   \begin{figure}[h]
 	\centering
 	\includegraphics[width=0.8\linewidth]{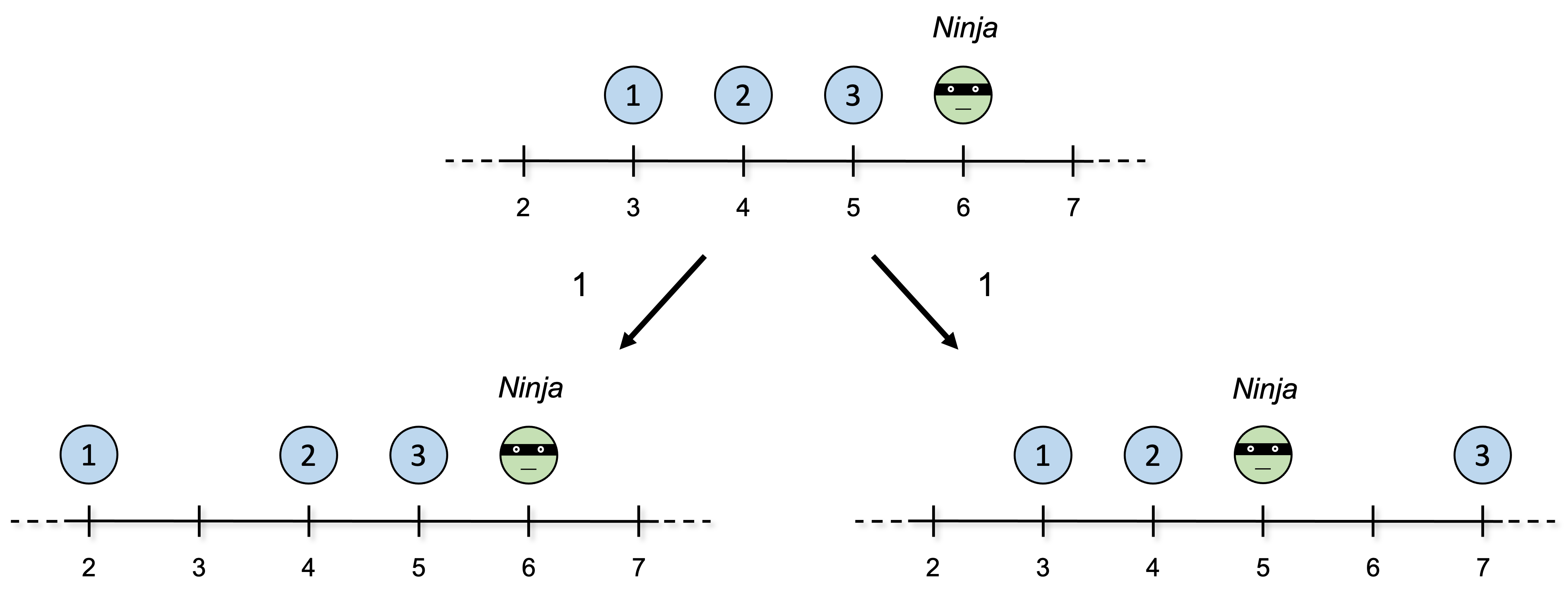}
 	\caption{}
 	\label{fig:ninjaparticle_block}
 \end{figure}
 
 	\item[\textbf{Case 3}]  (The \textit{Ninja} returns)  %In the Ninja particle system, it is not allowed the situation in which at some time  the \textit{Ninja}  is at site $N$ and one of the other particles is at $N-1$. If the Ninja is at $N$ and its clock rings, it will not move. However, the Ninja can escape from the absorbing site $N$ when one of the other particle is at $N-2$. Thus if $Ninja_{t^-}=N$ and $X^{x_i}_{t^-}=N-2$ and the clock of the $i-th$ particle rings, then with equal probability a direction is chosen between left and right. If the left direction is chosen the $i$-th particle jumps on the site $N-3$ if empty otherwise it does not move. If the right direction is chosen, then $X^{x_i}_{t}=N$ and will stay there forever while  $Ninja_{t}=N-1$, namely the $i$-th particle jumps to the absorbing point and the $\textit{Ninja}$ escapes from the absorbing point and moves to $N-1$. 
 	%The analogous dynamics occurs at site $0$. Namely the above described movements occurs also when ones substitutes $0$ with $N$, $1$ with $N-1$ and $2$ with $N-2$ and left with right.
 	In the Ninja particle system, it is not allowed the situation in which  the \textit{Ninja}  is at site $x\in\{N, 0\}$ and one of the other particles is at $|x-1|$. If the \textit{Ninja} is at $x\in\{N, 0\}$, it  can escape from the absorbing site $x\in\{N, 0\}$ when one of the other particles tries to jump  at $|x-1|$. Indeed suppose that $x_i=|x-2|$, then all other particles behave as simple symmetric random walks jumping at rate one, subject to the exclusion rule and with $\{0,N\}$ acting as absorbing sites, while the couple $(x_i, \textit{Ninja})=(|x-2|,x)$ jumps at rate $1$ to 
 	$$\left(  |x-2|-\frac{x-|x-2|}{2} , x\right)$$
 	if $|x-2|-\frac{x-|x-2|}{2} $ is empty and at rate $1$ to 
 	
 	$$\left(x, x-\frac{x-|x-2|}{2}\right)$$
 	(see Figure \ref{Ninja at the boundary}). 
 
 	\begin{figure}[h]
 		\centering
 		\includegraphics[width=0.8\linewidth]{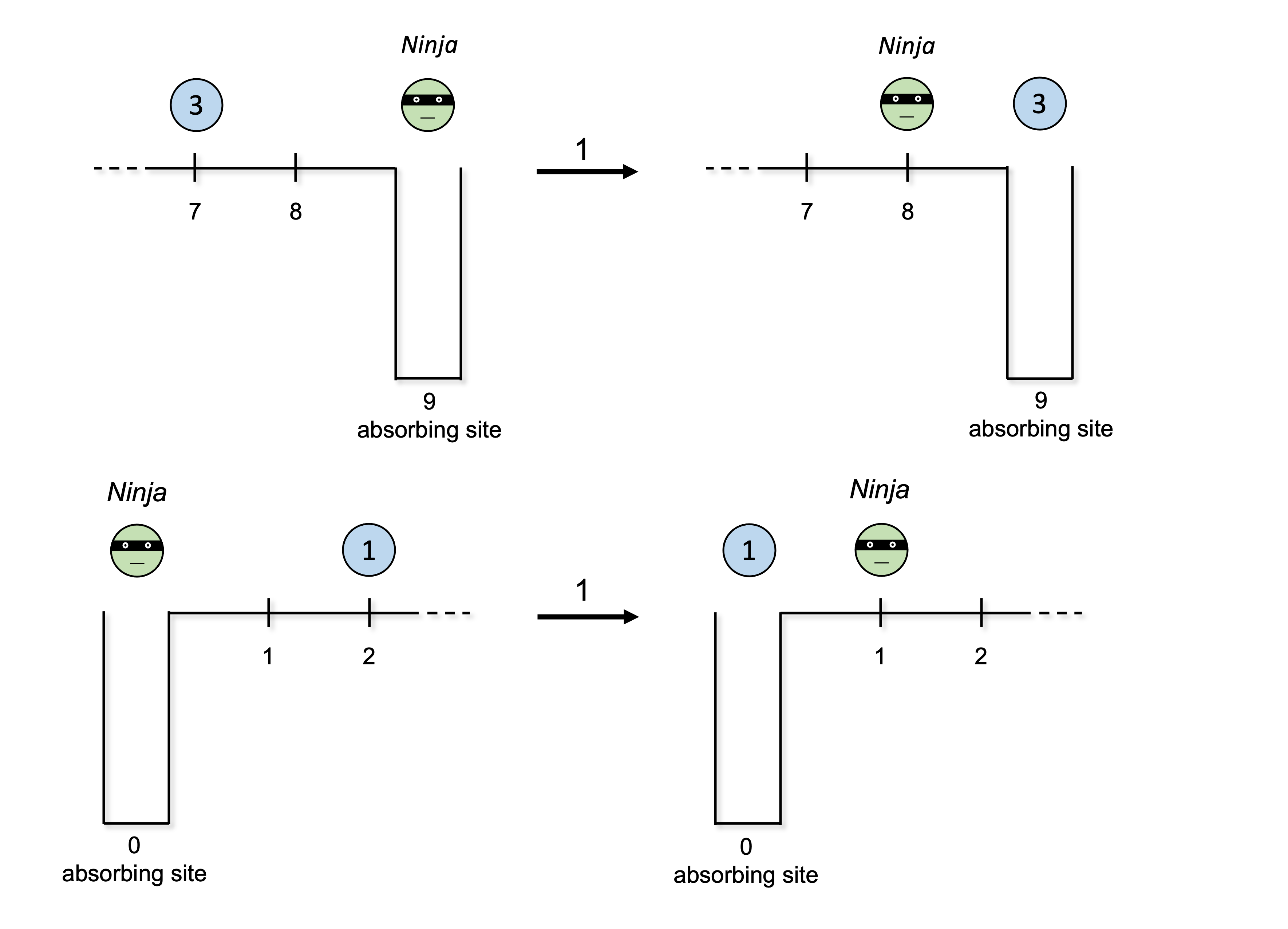}
 		\caption{}
 		\label{Ninja at the boundary}
 	\end{figure}
 	
 \end{itemize}
 We now describe the dynamics of this particle system that we call \textit{Ninja}-process via its generator 
 $$\hat{\mathcal{L}}^{\textit{Ninja}}=\hat{\mathcal{L}}^{\textit{Ninja}}_1+\hat{\mathcal{L}}^{\textit{Ninja}}_2+\hat{\mathcal{L}}^{\textit{Ninja}}_3$$
 acting on functions $f:[N]^{k+1}\to \R$.

$\hat{\mathcal{L}}^{\textit{Ninja}}_1$ describes the dynamics in \textbf{Case 1} and it is given by 

% \begin{align*}
%& L^1 f (x_1,\ldots, x_k)= \left(\prod_{i=1}^k\one_{\{|x_i-x_{k+1}|\ne 1\}}\right)\one_{\{x_{k+1}\notin \{0,N\} \}} \\
%&\times \left(\sum_{i=1}^{k+1}\sum_{x=0}^{N-1}  \one_{\{x_i=x\}} \left(1- \sum_{j=1}^{k+1} \one_{\{x_j=x+1\}})\right) \right.\\&\left.\times (f (x_1,\ldots, x_{i-1}, x_i+1,x_{i+1},\ldots,x_{k+1})-f (x_1,\ldots, x_{k+1}))\right.\\
%&\qquad \left.  \sum_{i=1}^{k+1} \sum_{x=1}^{N} \one_{\{x_i=x\}} \left(1- \sum_{j=1}^{k+1} \one_{\{x_j=x-1\}})\right)  \right. \\&\left.\times (f (x_1,\ldots, x_{i-1}, x_i-1,x_{i+1},\ldots,x_{k+1})-f (x_1,\ldots, x_{k+1})) \right)
 %\end{align*}

 \begin{align*}
 & \hat{\mathcal{L}}^{\textit{Ninja}}_1 f (x_1,\ldots, x_{k+1})= \left(\prod_{i=1}^k\one_{\{|x_i-x_{k+1}|\ne 1\}}\right)\one_{\{x_{k+1}\notin \{0,N\} \}} \\
 &\times \left(\sum_{i=1}^{k+1}  \one_{\{x_i\notin\{0,N\}\}} \left(1- \sum_{j=1}^{k+1} \one_{\{x_j=x_i\pm 1\}})\right) \right.\\
 &\qquad \left.  \times   (f (x_1,\ldots, x_{i-1}, x_i\pm1,x_{i+1},\ldots,x_{k+1})-f (x_1,\ldots, x_{k+1})) \right).
 \end{align*}
 
 $\hat{\mathcal{L}}^{\textit{Ninja}}_2$ describes the dynamics in \textbf{Case 2} (see Figure \ref{fig:ninjaparticle2} and \ref{fig:ninjaparticle_block}) and it is given by
  \begin{align*}
 & \hat{\mathcal{L}}^{\textit{Ninja}}_2 f (x_1,\ldots, x_{k+1})=\left(1-\prod_{i=1}^k\one_{\{|x_i-x_{k+1}|\ne 1\}}\right)\one_{\{x_{k+1}\notin \{0,N\} \}} \\
 &\times \left(\sum_{i=1}^{k}  \one_{\{x_i\notin\{0,N\}\}} \left[\left(1- \sum_{j=1}^{k+1} \one_{\{x_j=x_i\pm 1\}})\right) \right.\right.\\
 &\qquad \left. \left.  \times   (f (x_1,\ldots, x_{i-1}, x_i\pm1,x_{i+1},\ldots,x_{k+1})-f (x_1,\ldots, x_{k+1})) \right.\right.\\
 &\left. \left. +\one_{\{|x_i-x_{k+1}|=1\}}(1-\sum_{j=1}^{k+1}\one_{\{x_i+2(x_{k+1}-x_i)=x_j\}})\right.\right.\\&\left.\left.\times (f (x_1,\ldots, x_{i-1}, x_i+2(x_{k+1}-x_i),x_{i+1},\ldots,x_i)-f (x_1,\ldots, x_{k+1})) \right]\right).
 \end{align*}

  $\hat{\mathcal{L}}^{\textit{Ninja}}_3$ describes the dynamics in \textbf{Case 3} (see Figure \ref{Ninja at the boundary}) and it is given by
 \begin{align*}
 & \hat{\mathcal{L}}^{\textit{Ninja}}_3 f (x_1,\ldots, x_k)=\one_{\{x_{k+1}\in \{0,N\} \}} \\
 &\times \left(\sum_{i=1}^{k}  \one_{\{x_i\notin\{0,N\}\}} \left[\one_{\{|x_i-x_{k+1}|\ne 2\}}\left(1- \sum_{j=1}^{k} \one_{\{x_j=x_i\pm 1\}})\right) \right.\right.\\
 &\qquad \left. \left.  \times   (f (x_1,\ldots, x_{i-1}, x_i\pm1,x_{i+1},\ldots,x_{k+1})-f (x_1,\ldots, x_{k+1})) \right.\right.\\
 &\left. \left. +\one_{\{|x_i-x_{k+1}|= 2\}}(1-\sum_{j=1}^{k+1}\one_{\{x_i-(x_{k+1}-x_i)/2=x_j\}})\right.\right.\\&\left.\left.\times (f (x_1,\ldots, x_{i-1}, x_i-(x_{k+1}-x_i)/2,x_{i+1},\ldots,x_{k+1})-f (x_1,\ldots, x_{k+1})) \right.\right.\\&\left.\left. +\one_{\{|x_i-x_{k+1}|= 2\}} (f (x_1,\ldots, x_{i-1}, x_{k+1},x_{i+1},\ldots,x_i(x_{k+1}-x_i)/2)-f (x_1,\ldots, x_{k+1}))  \right]\right).
 \end{align*}

 \subsection{Consequences of the construction and the coupling}

 From the construction of the \textit{Ninja}-process, the two propositions below follow. 
 The first one states that if one forgets the labels in the Ninja process and looks at the configuration process associated to it, then the usual dual boundary driven process on $[N]_0$ is observed.
 
 \begin{proposition}\label{proposition: Ninja sep = dual sep}
 	Let $(X^{(1)}_t,\ldots, X^{(k)}_t, \textit{Ninja}_t)_{t\ge 0}$ be the \textit{Ninja}-process and define

 $$\mathcal N^{[N]_0}_t:= \sum_{i=1}^k \delta_{X^{(i)}_t} + \delta_{\textit{Ninja}_t}.$$
 Then, if $\xi^{[N]_0}_0= \sum_{i=1}^{k} \delta_{X^{(i)}_0} + \delta_{\textit{Ninja}_0}$, 
 $$(\mathcal  N^{[N]_0}_t)_{t\ge 0}=(\xi^{[N]_0}_t)_{t\ge 0}$$
 in distribution.
 \end{proposition}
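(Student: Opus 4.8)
The strategy is to identify the generator of the configuration process $(\mathcal N^{[N]_0}_t)_{t\ge0}$ and recognise it as $\hat{\mathcal L}^{[N]_0}$; the claim then follows from uniqueness, since the state spaces are finite and a continuous-time Markov chain is determined in law (path law included) by its initial distribution together with its generator, and by hypothesis $\mathcal N^{[N]_0}_0=\xi^{[N]_0}_0$. Write $\pi:[N]_0^{k+1}\to\hat{\mathcal X}$ for the forgetful map $\pi(x_1,\ldots,x_{k+1})=\sum_{i=1}^{k+1}\delta_{x_i}$, so that $\mathcal N^{[N]_0}_t=\pi(X^{(1)}_t,\ldots,X^{(k)}_t,\textit{Ninja}_t)$. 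The first step is to check that $\pi$ is a \emph{lumping} for the \textit{Ninja}-process: for every labelled state $\bsx$ and every configuration $\zeta\ne\pi(\bsx)$ the aggregated rate $\sum_{\bsy:\pi(\bsy)=\zeta}q^{\textit{Ninja}}(\bsx,\bsy)$ depends on $\bsx$ only through $\pi(\bsx)$ and equals the rate of the transition $\pi(\bsx)\to\zeta$ under $\hat{\mathcal L}^{[N]_0}$; equivalently, $\hat{\mathcal L}^{\textit{Ninja}}(g\circ\pi)=(\hat{\mathcal L}^{[N]_0}g)\circ\pi$ for every $g:\hat{\mathcal X}\to\R$. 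The standard lumpability argument then shows that $(\mathcal N^{[N]_0}_t)_{t\ge0}$ is Markov with generator $\hat{\mathcal L}^{[N]_0}$, which finishes the proof.

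The content is therefore the verification of the identity $\hat{\mathcal L}^{\textit{Ninja}}(g\circ\pi)=(\hat{\mathcal L}^{[N]_0}g)\circ\pi$, which I would carry out by running through the three regimes of the \textit{Ninja}-dynamics and matching, in each, every labelled transition with a rate-one nearest-neighbour exclusion move at the level of configurations. In \textbf{Case 1} the labelled moves are literally the independent rate-one exclusion hops together with the absorptions at $\{0,N\}$, so $\hat{\mathcal L}^{\textit{Ninja}}_1(g\circ\pi)$ reproduces the corresponding part of $\hat{\mathcal L}^{[N]_0}g$ term by term. In \textbf{Case 2} the key observation is that, for a particle $(i)$ at distance one from the \textit{Ninja}, the couple move $(x_i,\textit{Ninja})\mapsto(x_i+2(\textit{Ninja}-x_i),x_i)$ changes $\pi$ exactly as ``the particle sitting at $\textit{Ninja}$ hops one step away from $x_i$'', while $(x_i,\textit{Ninja})\mapsto(x_i-(\textit{Ninja}-x_i),\textit{Ninja})$ changes $\pi$ exactly as ``the particle sitting at $x_i$ hops one step away from the \textit{Ninja}''; both occur at rate one, and the two complementary hops between the occupied sites $x_i$ and $\textit{Ninja}$ are forbidden on both sides by exclusion. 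Summing over all particles adjacent to the \textit{Ninja} — which is where the block configurations of Figure~\ref{fig:ninjaparticle_block} must be handled — and adding the ordinary exclusion moves of the non-adjacent particles, one recovers precisely the transitions of $\hat{\mathcal L}^{[N]_0}$ involving those sites. In \textbf{Case 3} the escape move $(|x-2|,x)\mapsto(x,x-\tfrac{x-|x-2|}{2})$ changes $\pi$ exactly as ``the particle at $|x-2|$ hops one step toward the absorbing site $x$'' (so that at the configuration level one particle is absorbed and a particle appears at the neighbouring bulk site), again at rate one, while $(|x-2|,x)\mapsto(|x-2|-\tfrac{x-|x-2|}{2},x)$ is the ordinary hop away from $x$; one also checks that the rates into and out of $\{0,N\}$ produced this way match $\omega_L=\omega_R=1$.

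I expect the bookkeeping, rather than any single case, to be the main obstacle, in two respects. First, one must verify that the \textit{Ninja}-dynamics never produces a configuration outside the range of $\hat{\mathcal L}^{[N]_0}$, and in particular never creates the excluded patterns (a \textit{Ninja} at $N$ with a particle at $N-1$, or at $0$ with a particle at $1$): concretely, whenever a labelled particle would otherwise be pushed onto an absorbing site already occupied by the \textit{Ninja}, the prescribed couple move is exactly the one that simultaneously ejects the \textit{Ninja} to the neighbouring bulk site, so that the forbidden pattern is traversed in a single jump and the configuration lands where $\hat{\mathcal L}^{[N]_0}$ sends it. Second, one must check that there is no over- or under-counting: the indicator prefactors $\prod_i\one_{\{|x_i-x_{k+1}|\ne1\}}$, $1-\prod_i\one_{\{|x_i-x_{k+1}|\ne1\}}$ and $\one_{\{x_{k+1}\in\{0,N\}\}}$ partition the labelled state space, so $\hat{\mathcal L}^{\textit{Ninja}}_1,\hat{\mathcal L}^{\textit{Ninja}}_2,\hat{\mathcal L}^{\textit{Ninja}}_3$ act on disjoint regimes, and within Case 2 the ``hop toward the \textit{Ninja}'' of an adjacent particle is already suppressed by the exclusion indicator $1-\sum_j\one_{\{x_j=x_i\pm1\}}$ in the exclusion term, hence it is not silently double-counted in the couple term. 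Once these checks are in place the generator identity holds, and with it the proposition.
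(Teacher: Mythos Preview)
Your proposal is correct and follows essentially the same approach as the paper: both reduce the claim to the intertwining identity $\hat{\mathcal L}^{\textit{Ninja}}(g\circ\pi)=(\hat{\mathcal L}^{[N]_0}g)\circ\pi$ for permutation-invariant $g$, and verify it by running through the three cases of the \textit{Ninja}-dynamics and matching each labelled transition with the corresponding configuration-level exclusion move at rate one. Your write-up is somewhat more explicit about the lumpability framework, the partition of the labelled state space by the three indicator prefactors, and the bookkeeping needed to avoid double-counting and excluded patterns, but the substance of the argument is the same.
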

 \begin{proof}
 	The result follows from the fact the for all $G:[N]_0^{k+1}\to \R$ permutation invariant, i.e.
 	$$G(x_1,\ldots,x_{k+1})=G\left(\sum_{i=1}^{k+1}\delta_{x_i}\right),$$
we have
 	$$\hat {\mathcal L}^{\textit{Ninja}}G=\hat {\mathcal L}^{[N]_0}G$$
 	which is easy to check by direct inspection.
 	
\noindent 	
Indeed notice  that all the transitions in the \textit{Ninja}-process occur at rate one.

Moreover, if $(x_1,\ldots,x_{k+1})$ is such that  $$\left(\prod_{i=1}^k\one_{\{|x_i-x_{k+1}|\ne 1\}}\right)\one_{\{x_{k+1}\notin \{0,N\} \}} =1,$$
i.e. we are in \textbf{Case 1}, 
 	then obviously 
 	$$\hat{\mathcal{L}}^{\textit{Ninja}}_1 G (x_1,\ldots, x_{k+1})=\hat{\mathcal{L}}^{[N]_0} G (x_1,\ldots, x_{k+1}).$$
If $(x_1,\ldots,x_{k+1})$ is such that 
 	
 $$	\left(1-\prod_{i=1}^k\one_{\{|x_i-x_{k+1}|\ne 1\}}\right)\one_{\{x_{k+1}\notin \{0,N\} \}} \one_{\{x_i\notin\{0,N\}\}}=1, $$
 and
 $$\one_{\{|x_i-x_{k+1}|=1\}}(1-\sum_{j=1}^{k+1}\one_{\{x_i+2(x_{k+1}-x_i)=x_j\}})=1,$$
 with, e.g. $x_{k+1}=x_i+1$, i.e. we are in \textbf{Case 2} (see Figure \ref{fig:ninjaparticle2} and \ref{fig:ninjaparticle_block}), because the transition 
 	$$(x_1,\ldots, x_{k+1})\to(x_1,\ldots, x_{i-1}, x_i+2(x_{k+1}-x_i),x_{i+1},\ldots,x_i)  $$
 	corresponds to the configuration transition 
 	
 	$$\sum_{j=1}^{k+1}\delta_{x_j}\to \sum_{j=1}^{k+1}\delta_{x_j}-\delta_{x_{k+1}}+\delta_{x_{k+1}+1}$$
 	then 
 	
 		$$\hat{\mathcal{L}}^{\textit{Ninja}}_2 G (x_1,\ldots, x_{k+1})=\hat{\mathcal{L}}^{[N]_0} G\left(\sum_{i=1}^{k+1}\delta_{x_i}\right).$$
 		
 If $(x_1,\ldots,x_{k+1})$ is such that 
 
 $$\one_{\{x_{k+1}\in \{0,N\} \}} \one_{\{x_i\notin\{0,N\}\}}\one_{\{|x_i-x_{k+1}|= 2\}},$$ 
 with, e.g., ${x_{k+1}}=N$ and thus $x_i=N-2$,
 i.e. we are in \textbf{Case 3} (see Figure \ref{Ninja at the boundary}), 
 because the transition 
 $$(x_1,\ldots, x_{k+1}) \to (x_1,\ldots, x_{i-1}, x_{k+1},x_{i+1},\ldots,x_i+(x_{k+1}-x_i)/2)   $$
 corresponds to the configuration transition 
 
 $$\sum_{j=1}^{k+1}\delta_{x_j}\to \sum_{j=1}^{k+1}\delta_{x_j}-\delta_{N-2}+\delta_{N-1}$$
 then 
 
 $$\hat{\mathcal{L}}^{\textit{Ninja}}_3 G (x_1,\ldots, x_{k+1})=\hat{\mathcal{L}}^{[N]_0} G\left(\sum_{i=1}^{k+1}\delta_{x_i}\right).$$
 Because for all $\{x_1,\ldots,x_{k+1}\}\in[N]_0$ such that $\sum_{i=1}^{k+1}\delta_{x_i}(x)\in\{0,1\}$ for all $x\in[N-1]$
 \begin{multline}
  \left(\prod_{i=1}^k\one_{\{|x_i-x_{k+1}|\ne 1\}}\right)\one_{\{x_{k+1}\notin \{0,N\} \}}\\+\left(1-\prod_{i=1}^k\one_{\{|x_i-x_{k+1}|\ne 1\}}\right)\one_{\{x_{k+1}\notin \{0,N\} \}} +\one_{\{x_{k+1}\in \{0,N\} \}} =1
  \end{multline}
 the proof is concluded.
 
 \end{proof}

 The second one, provides a map from the  \textit{Ninja}-process to the dual of the boundary driven SSEP evolving on the reduced graph $[N-1]_0$ with $\{0,N-1\}$ acting as absorbing sites, namely to $(\xi^{[N-1]_0}_t)_{t\ge 0}$. 
 \begin{proposition}\label{proposition: f map}
 For $x,y\in[N]_0$ set
 \begin{align}
 \pi(x, y):=\begin{cases}
 N-1& \text{if }\ y=x=N\\
 x-\1_{\{y<n\}}& \text{otherwise}.
 \end{cases}
 \end{align}
 Let $(X^{(1)}_t,\ldots, X^{(k)}_t, \textit{Ninja}_t)_{t\ge 0}$ be the \textit{Ninja}-process and define
 
 $$\mathcal N^{[N-1]_0}_t:= \sum_{i=1}^k \delta_{\pi(X^{(i)}_t, \textit{Ninja}_t)}.$$
 Then, if $\xi^{[N-1]_0}_0= \sum_{i=1}^{k} \delta_{\pi(X^{(i)}_0,\textit{Ninja}_0)}$, 
 $$(\mathcal  N^{[N-1]_0}_t)_{t\ge 0}=(\xi^{[N-1]_0}_t)_{t\ge 0}$$
 in distribution.
 \end{proposition}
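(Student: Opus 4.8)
The plan is to follow the template used for Proposition~\ref{proposition: Ninja sep = dual sep}: we exhibit the deterministic map
\[
\Pi(x_1,\dots,x_{k+1}):=\sum_{i=1}^{k}\delta_{\pi(x_i,x_{k+1})}
\]
and show that it \emph{intertwines} the two generators, that is, $\hat{\mathcal{L}}^{\textit{Ninja}}(G\circ\Pi)=\big(\hat{\mathcal{L}}^{[N-1]_0}G\big)\circ\Pi$ for every function $G$ on configurations of $[N-1]_0$. Granting the intertwining, the statement follows by the standard lumping argument for finite-state Markov chains: for each such $G$ the process $G(\mathcal N^{[N-1]_0}_t)-\int_0^t\big(\hat{\mathcal L}^{[N-1]_0}G\big)(\mathcal N^{[N-1]_0}_s)\,\dd s$ is a martingale for the filtration of the \textit{Ninja}-process, hence also for the smaller filtration generated by $(\mathcal N^{[N-1]_0}_s)_{s\le t}$; therefore $(\mathcal N^{[N-1]_0}_t)_{t\ge0}$ solves the martingale problem associated with $\hat{\mathcal L}^{[N-1]_0}$, and well-posedness of that problem, together with the assumed equality of initial data, identifies its law with that of $(\xi^{[N-1]_0}_t)_{t\ge0}$.

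Before the case analysis I would record two simple facts about $\pi$ and $\Pi$. First, $\Pi$ really is configuration-valued on $[N-1]_0$: for a fixed Ninja position $y$ the map $x\mapsto\pi(x,y)$ is injective on the bulk positions occupied by regular particles — two of them could be identified only if the Ninja sat strictly between two neighbouring regular particles, which the exclusion rule in the \textit{Ninja}-dynamics forbids — while regular particles absorbed at $0$ or $N$ are sent to $0$ or $N-1$, the absorbing sites of $[N-1]_0$, where pile-up is permitted. Second, and this is the point of the construction, a move of the Ninja alone — a free jump $y\to y\pm1$ with no regular particle at $y\pm1$, including the absorptions $N-1\to N$ and $1\to 0$ — leaves $\Pi$ unchanged, because $\pi(x,\cdot)$ registers the Ninja only through whether it lies to the left or to the right of $x$, and such a jump crosses no regular particle. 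Hence every ``pure Ninja'' transition is invisible for $\mathcal N^{[N-1]_0}$, exactly as it must be, since in the target process the site occupied by the Ninja has been erased.

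It then remains to go through \textbf{Case 1}--\textbf{Case 3} in the definition of $\hat{\mathcal L}^{\textit{Ninja}}$ and to check that every surviving transition projects, under $\Pi$, to a single admissible jump of $\mathcal N^{[N-1]_0}$ at rate $1$, and that conversely no legal jump of the image is omitted. A free step of a regular particle not adjacent to the Ninja projects to the same step, the shift toggled by $\pi$ being unchanged. In \textbf{Case 2}, for a regular particle immediately next to the Ninja the two permitted transitions of the interacting pair project, respectively, to a left and a right elementary step of that particle's image, with the images of all other particles frozen; if regular particles flank the Ninja on both sides the ``jump-over'' options are blocked by exclusion and one is left with the two outward steps, which again project correctly. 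In \textbf{Case 3}, when the Ninja is parked at $N$ and a regular particle sits at $N-2$, the ``return'' transition absorbs that particle at $N$ while the Ninja drops back to $N-1$; under $\pi$ the particle's image jumps onto the absorbing site $N-1$ of $[N-1]_0$, and the relabelling forced by the Ninja's displacement leaves every other image in place, so the net effect is precisely one elementary step of $\mathcal N^{[N-1]_0}$; the companion transition $N-2\to N-3$ projects to the corresponding interior step, and the situation near $0$ is symmetric. Summing over the possible transitions then gives the intertwining identity.

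The substantive point, and the step I expect to require the most care, is \textbf{Case 3} and its mirror image at $0$: one must verify that the simultaneous jump of the Ninja off the absorbing endpoint and the shift built into the piecewise definition of $\pi$ conspire to produce exactly one elementary move of $\mathcal N^{[N-1]_0}$, rather than a spurious global shift of all the other particles — this is where the prohibition of a regular particle adjacent to a parked Ninja is genuinely used. The rest of the verification is routine bookkeeping, the only additional care being to make sure that in these boundary configurations no two regular particles are mapped to the same bulk site, and that transition rates are not double counted when several regular particles abut the Ninja.
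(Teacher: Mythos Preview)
Your proposal is correct and follows essentially the same route as the paper: both establish the generator intertwining $\hat{\mathcal L}^{\textit{Ninja}}(G\circ\Pi)=(\hat{\mathcal L}^{[N-1]_0}G)\circ\Pi$ for configuration functions $G$ by a case analysis of the \textit{Ninja} transitions, and deduce equality in law from there. The paper's proof is only a sketch that singles out the key transitions in \textbf{Case~2} and \textbf{Case~3}; you supply more of the bookkeeping (the invariance of $\Pi$ under pure Ninja moves, the injectivity check, the explicit martingale-problem justification), but the underlying argument is the same.
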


\begin{proof}
Similarly to the proof of Proposition \ref{proposition: Ninja sep = dual sep}, the result follows from the fact the for all $G:[N]_0^{k}\to \R$ permutation invariant, i.e.
$$G(x_1,\ldots,x_{k})=G\left(\sum_{i=1}^{k}\delta_{x_i}\right),$$
we have, for all $x_1,\ldots,x_k$
$$\hat {\mathcal L}^{\textit{Ninja}}G(\pi(x_1,x_{k+1}),\ldots, \pi(x_k,x_{k+1}))=\hat {\mathcal L}^{[N]_0}G\left(\sum_{i=1}^{k}\delta_{x_i}\right)$$
for all $x_{k+1}\in[N]_0$ such that, if $x_{k+1}\notin\{0,N\}$, $\sum_{i=1}^{k}\delta_{x_i}(x_{k+1})=0.$

We do not provide all the details and we just point out that 
\begin{itemize}
	\item[in \textbf{Case 2}] (see Figure \ref{fig:ninjaparticle2} and \ref{fig:ninjaparticle_block}) if $x_{k+1}=x_i+1$ the transition 
		$$(x_1,\ldots, x_{k+1})\to(x_1,\ldots, x_{i-1}, x_i+2(x_{k+1}-x_i),x_{i+1},\ldots,x_i)  $$
		in the \textit{Ninja}-process, if allowed, corresponds to the transition
		$$ \sum_{i=1}^k \delta_{\pi(x_i,x_{k+1})} \to \sum_{i=1}^k \delta_{\pi(x_i,x_{k+1})}  -\delta_{x_i}+\delta_{x_i+1}$$
		in the $(\mathcal  N^{[N-1]_0}_t)_{t\ge 0}$ process.
		
		\item[In \textbf{Case 3}] (see Figure \ref{Ninja at the boundary}) if 
		${x_k+1}=N$ and $x_i=N-2$,
		i.e. we are in \textbf{Case 3}, 
		$$(x_1,\ldots, x_{k+1}) \to (x_1,\ldots, x_{i-1}, x_{k+1},x_{i+1},\ldots,x_i+(x_{k+1}-x_i)/2)   $$
			in the \textit{Ninja}-process, corresponds to the transition
		
		$$\sum_{i=1}^k \delta_{\pi(x_i,x_{k+1})} \to \sum_{i=1}^k \delta_{\pi(x_i,x_{k+1})} -\delta_{N-2}+\delta_{N-1}$$
	in the 	$(\mathcal  N^{[N-1]_0}_t)_{t\ge 0}$ process and the particle at $N-1$ will stay there forever.
\end{itemize}
Both the above transitions are usual  transitions in the dual BD-SSEP $(\xi^{[N-1]_0}_t)_{t\ge 0}$.
\end{proof}

We conclude this section by collecting in the corollary below some direct consequences of Propositions \ref{proposition: Ninja sep = dual sep} and \ref{proposition: f map} which will be used in the proof of Theorem \ref{theorem: recursion absorption probabilities via Ninja}. The result below follows from the observation that the function
$$f(x_1,\ldots, x_k):= \one_{ \{  \left(\sum_{i=1}^k \one_{\{x_i=N\}}\right)=k\} }$$
is permutation invariant.
\begin{corollary}\label{corollary: useful results from Ninja}
	\begin{enumerate}
		\item For all  $x_1,\ldots,x_{k+1}\in[N]_0$ with
		$$1\le x_1<\ldots<x_{k}\le N$$
		and 
		$$x_{k+1}\notin\{x_1,\ldots,x_k\}$$ \begin{multline}
		\P^{\textit{Ninja}}_{x_1,\ldots,x_k,x_{k+1}}(\{X^{(1)}_\infty=N,\ldots, X_\infty^{(k)}=N,\textit{Ninja}_\infty=N\})\\=\hat\P^{[N]_0}_{\{x_1,\ldots,x_{k+1}\}}(\xi^{[N]_0}_\infty(N)=k+1).
		\end{multline}
		\item For each $1\le x_1<\ldots<x_k\le N-1$
		\begin{multline}	\label{corollary: absorb prob on the right}\P^{\textit{Ninja}}_{x_1,\ldots,x_k,x_{k+1}}(\{X^{(1)}_\infty=N,\ldots, X_\infty^{(k)}=N\})\\=	\P^{\textit{Ninja}}_{x_1,\ldots,x_k,y_{k+1}}(\{X^{(1)}_\infty=N,\ldots, X_\infty^{(k)}=N\})= \hat \P^{[N-1]_0}_{\{x_1,\ldots,x_k\}}(\xi^{[N-1]_0}_\infty(N-1)=k).	\end{multline}
		for all $x_{k+1}, y_{k+1}\in [N]_0\setminus\{x_1,\ldots,x_k\}$.

	\end{enumerate}

\end{corollary}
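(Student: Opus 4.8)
The plan is to read off both identities from Propositions~\ref{proposition: Ninja sep = dual sep} and~\ref{proposition: f map} by evaluating them on the permutation invariant indicator $f$ introduced above (and its obvious analogue on $[N-1]_0$), and then to pass to the limit $t\to\infty$. The ingredient that legitimises this limit is the remark that in the \textit{Ninja}-process every particle, the \textit{Ninja} included, is absorbed in $\{0,N\}$ in an almost surely finite time, after which the process is frozen: by Proposition~\ref{proposition: Ninja sep = dual sep} its configuration process has the law of the dual SEP on the connected graph $[N]_0$, which absorbs all particles in finite time, and once all particles lie in $\{0,N\}$ no site of $\{1,N-1\}$ is occupied, so no \textit{Ninja}-transition is available. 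In particular $X^{(1)}_\infty,\ldots,X^{(k)}_\infty,\textit{Ninja}_\infty$ are well defined elements of $\{0,N\}$.

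For item~(1) I would apply $f$ with $k$ replaced by $k+1$. Permutation invariance gives $f(\mathcal N^{[N]_0}_t)=\1_{\{\mathcal N^{[N]_0}_t(N)=k+1\}}$, and as there are only $k+1$ particles this indicator equals $1$ exactly when all labelled particles and the \textit{Ninja} sit at $N$. That event is a trap (with no particle at $N-1$ the \textit{Ninja} cannot leave $N$ via \textbf{Case 3}, and nothing else can move), so $\{\mathcal N^{[N]_0}_t(N)=k+1\}$ increases as $t\to\infty$ to $\{X^{(1)}_\infty=N,\ldots,X^{(k)}_\infty=N,\textit{Ninja}_\infty=N\}$, the coincidence of this limiting event using the preliminary remark. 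Taking expectations, using Proposition~\ref{proposition: Ninja sep = dual sep} at each fixed $t$, and letting $t\to\infty$ by monotone convergence (the event $\{\xi^{[N]_0}_t(N)=k+1\}$ is increasing in $t$ too, $N$ being absorbing for $\xi^{[N]_0}$) yields item~(1).

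For item~(2) I would first rewrite $\{X^{(1)}_\infty=N,\ldots,X^{(k)}_\infty=N\}$ through the projected process $\mathcal N^{[N-1]_0}_t=\sum_{i=1}^k\delta_{\pi(X^{(i)}_t,\textit{Ninja}_t)}$. Inspecting the definition of $\pi$, and using that a \textit{Ninja} at $N$ forbids a particle at $N-1$ and that two particles never occupy the same site, one checks that $\pi(X^{(i)}_t,\textit{Ninja}_t)=N-1$ forces $X^{(i)}_t=N$; since a labelled particle that reaches $N$ stays there, $\{\mathcal N^{[N-1]_0}_t(N-1)=k\}$ is again a trap and it increases to $\{X^{(1)}_\infty=N,\ldots,X^{(k)}_\infty=N\}$ (using also $\pi(N,\cdot)=N-1$ and $\pi(0,\cdot)=0$ on absorbed configurations). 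Evaluating the $[N-1]_0$-analogue of $f$, invoking Proposition~\ref{proposition: f map} at each $t$, and letting $t\to\infty$ gives
\[
\P^{\textit{Ninja}}_{x_1,\ldots,x_k,x_{k+1}}(X^{(1)}_\infty=N,\ldots,X^{(k)}_\infty=N)=\hat\P^{[N-1]_0}_{\{\pi(x_1,x_{k+1}),\ldots,\pi(x_k,x_{k+1})\}}(\xi^{[N-1]_0}_\infty(N-1)=k).
\]
In the situation used in the sequel, where the \textit{Ninja} starts to the right of all the other particles ($x_{k+1}>x_i$ for each $i\le k$, and likewise for $y_{k+1}$), one has $\pi(x_i,x_{k+1})=x_i$ for every $i$, so the right-hand side equals $\hat\P^{[N-1]_0}_{\{x_1,\ldots,x_k\}}(\xi^{[N-1]_0}_\infty(N-1)=k)$ and does not involve $x_{k+1}$ at all; this is item~(2).

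The substantive work is already done in Propositions~\ref{proposition: Ninja sep = dual sep} and~\ref{proposition: f map} (the generator identities $\hat{\mathcal L}^{\textit{Ninja}}G=\hat{\mathcal L}^{[N]_0}G$ and its $\pi$-image), so what is left is essentially bookkeeping. The two places where I expect real care to be needed are the passage to $t=\infty$, which is exactly what the ``trap'' observations are designed to handle, and the case analysis identifying $\pi$ on absorbed \textit{Ninja}-configurations while respecting the process's forbidden configurations; of these, the latter is the one I would flag as the main point to get exactly right.
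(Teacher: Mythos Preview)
Your proposal is correct and follows exactly the paper's approach: the paper's proof is the single sentence that $f$ is permutation invariant, leaving to the reader precisely the details you supply about freezing after absorption, passing to $t=\infty$, and reading the events through $\pi$. Your caveat that item~(2) as written only follows when the \textit{Ninja} starts to the right of $x_k$ (since otherwise the projected initial configuration $\{\pi(x_i,x_{k+1})\}_i$ differs from $\{x_i\}_i$) is well taken---this is the only case actually used downstream, and your displayed identity with $\{\pi(x_1,x_{k+1}),\ldots,\pi(x_k,x_{k+1})\}$ already gives the invariance in $x_{k+1}$ needed later (where the \textit{Ninja} moves without crossing a particle).
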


%The second one, it is a simple observation of the fact that once the \textit{Ninja}  is at site $N$ and, let us say, the $j$-th particle arrives at $N-1$ (resp. 1) and wants to jump to the right (resp. to the left), then the systems simply perform a relabelling of the particles where now the $j$-th particles is absorbed at $N$ (resp. at $0$) and the \textit{Ninja}  goes to $N-1$ (resp. to $1$). Thus, overall, our system, behaves as the usual configuration process dual of the boundary driven SSEP except that there is the relabbeling just described: such rellabelling however does not influence the probability that that eventually all the particles are absorbed at $N$, i.e. 

 \subsection{Proof of Theorem \ref{theorem: recursion absorption probabilities via Ninja}}
 
%We start by identifying a martingale for the dynamics described in the previous subsection that will allow us to prove the relation for the absorption probabilities.
 We now have almost all the elements to prove Theorem \ref{theorem: recursion absorption probabilities via Ninja}. Indeed by Corollary \ref{corollary: useful results from Ninja} we have that, for all $x_{k+1}\notin \{x_1,\ldots,x_k\}$,

 \begin{align*}
 &\hat \P^{[N-1]_0}_{\{x_1,\ldots,x_k\}}(\xi^{[N-1]_0}_\infty(N-1)=k)=\\
 &=		\P^{\textit{Ninja}}_{x_1,\ldots,x_k,x_{k+1}}(\{X^{(1)}_\infty=N,\ldots, X_\infty^{(k)}=N,\textit{Ninja}_\infty=N\})\\&+\	\P^{\textit{Ninja}}_{x_1,\ldots,x_k,x_{k+1}}(\{X^{(1)}_\infty=N,\ldots, X_\infty^{(k)}=N,\textit{Ninja}_\infty=0\})\\
  \end{align*}

  and denoting by $E$ be the event that all the $k$ particles are absorbed at $N$, i.e. 
 
 \begin{align}\label{eq: event E}
 E:=\{X^{x_i}_\infty=N\ \forall i\in\{1,\ldots, k\}\}
 \end{align}
  we obtain

 \begin{multline*}
 \hat \P^{[N-1]_0}_{\{x_1,\ldots,x_k\}}(\xi^{[N-1]_0}_\infty(N-1)=k)=
 \hat\P^{[N]_0}_{\{x_1,\ldots,x_{k+1}\}}(\xi^{[N]_0}_\infty(N)=k+1)\\+\P^{\textit{Ninja}}_{x_1,\ldots,x_k,x_{k+1}}(\textit{Ninja}_\infty=0 | E )\P^{[N-1]_0}_{\{x_1,\ldots,x_k\}}(\xi^{[N-1]_0}_\infty(N)=k).
 \end{multline*}

 The conclusion of the proof follows from the next proposition.

 \begin{proposition}
 	 Let $(X^{(1)}_t,\ldots, X^{(k)}_t, \textit{Ninja}_t)_{t\ge 0}$ be the \textit{Ninja}-process and recall the event  $E$ given in \eqref{eq: event E}. Then, for all $x_{k+1}\notin \{x_1,\ldots,x_k\}$,
 	\begin{align}\label{eq: conditional probability}
 	\P^{\textit{Ninja}}_{x_1,\ldots,x_k,x_{k+1}}(\textit{Ninja}_\infty=0 | E )=1 - \frac{x_{k+1}-k}{N}.
 	\end{align}
 \end{proposition}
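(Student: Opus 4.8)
The plan is to identify the quantity $\P^{\textit{Ninja}}_{x_1,\ldots,x_k,x_{k+1}}(\textit{Ninja}_\infty=N \mid E)$ (equivalently $1$ minus the claimed expression), and to show it equals $(x_{k+1}-k)/N$ by a martingale / harmonic-function argument tailored to the \textit{Ninja} particle, exploiting the fact that, conditionally on $E$, all the labelled particles $X^{(1)},\ldots,X^{(k)}$ have been absorbed at $N$ and so eventually the \textit{Ninja} moves as an (almost) free random walk. First I would observe that on $E$ the \textit{Ninja} cannot be trapped forever at $0$: by the dynamics of \textbf{Case 3}, whenever a labelled particle is about to be absorbed at $N$ it may have to push the \textit{Ninja} off the boundary, and since all $k$ particles end at $N$, infinitely many such interactions near $N-1$ occur along the event $E$; so on $E$ the \textit{Ninja} is itself eventually absorbed, at $0$ or at $N$. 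Thus $\P^{\textit{Ninja}}(\textit{Ninja}_\infty=0\mid E)+\P^{\textit{Ninja}}(\textit{Ninja}_\infty=N\mid E)=1$ and it suffices to compute one of them.

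The key step is to find the right martingale. I would track the position $\textit{Ninja}_t$ together with the (decreasing) number of labelled particles not yet absorbed; the natural candidate is something like $\textit{Ninja}_t$ corrected by a term that records how many labelled particles have been absorbed at $N$ and therefore ``displaced'' the \textit{Ninja}. Concretely, in \textbf{Case 2} when a block to the left of the \textit{Ninja} shoves it, the \textit{Ninja} coordinate decreases by the block length; in \textbf{Case 3} each absorption of a labelled particle at $N$ (from site $N-1$, which was at \textit{Ninja}$-1$) moves the \textit{Ninja} down by one. The cleanest route is to use Proposition~\ref{proposition: f map}: under the map $\pi$, the first $k$ particles become the dual process on $[N-1]_0$, and on $E$ they are all absorbed at $N-1$. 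So I would condition on the full trajectory of $(\mathcal N^{[N-1]_0}_t)_{t\ge 0}$ restricted to $E$ and argue that, given this trajectory, the \textit{Ninja}'s motion is that of a single symmetric random walk on $\{0,1,\ldots,N\}$ with absorbing endpoints, but whose starting point is shifted: whenever one of the $k$ particles is absorbed at $N-1$ (in the reduced picture), the \textit{Ninja}'s position in the full picture is one unit lower than a ``free'' random walk would predict. After all $k$ absorptions the \textit{Ninja} performs a genuine symmetric random walk on $\{0,\ldots,N\}$ killed at the ends, started from its current location $Z$; hence $\P(\textit{Ninja}_\infty=N\mid E,\ Z)=Z/N$, and one needs $\E[Z\mid E]=x_{k+1}-k$.

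To get $\E[Z\mid E]=x_{k+1}-k$ I would set up the martingale $M_t:=\textit{Ninja}_t + (\text{number of the }x_1,\ldots,x_k\text{ absorbed at }N\text{ by time }t)$ with respect to the \textit{Ninja}-process filtration, and check $\hat{\mathcal L}^{\textit{Ninja}}M=0$ by going through Cases 1–3: in Case 1 the \textit{Ninja} is a mean-zero random walk and the counter is constant; in Case 2 the symmetric pair-move either shifts the \textit{Ninja} up by one (nothing absorbed) or down by the block length — and a careful accounting (the block particles themselves are not yet absorbed) shows the compensator vanishes; in Case 3 the move that absorbs a labelled particle at $N$ simultaneously lowers the \textit{Ninja} by one, so the sum is unchanged, while the competing move (the \textit{Ninja} re-enters, particle stays) also preserves $M$ in expectation. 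Then optional stopping at $t=\infty$ on the event $E$ (where the counter reaches $k$ and the \textit{Ninja} is absorbed) gives $\E[\textit{Ninja}_\infty \mathbf 1_E] + k\,\P(E) = \E[M_0\mathbf 1_E]=x_{k+1}\P(E)$, and since $\textit{Ninja}_\infty\in\{0,N\}$ this rearranges to $N\,\P(\textit{Ninja}_\infty=N, E) = (x_{k+1}-k)\P(E)$, i.e. \eqref{eq: conditional probability}.

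The main obstacle I expect is the bookkeeping in Cases 2 and 3: one must verify that the ``block push'' and the boundary re-entry transitions, which move several coordinates at once and which are precisely where the \textit{Ninja} label does something non-trivial, contribute zero drift to $M_t$ — in particular that displacing the \textit{Ninja} down by the block length is exactly compensated (in expectation, via the rate-$1$ symmetric alternative of pushing the block to the left), and that an absorption at $N$ changes $\textit{Ninja}_t$ by $-1$ but the counter by $+1$ with no leftover term. A secondary subtlety is justifying optional stopping: one needs that $E\subset\{\textit{Ninja}\text{ is absorbed}\}$ and that the counter is bounded by $k$, so $M_t$ is a bounded martingale and $M_\infty$ is well-defined on $E$; the almost-sure absorption of all particles in a finite connected graph makes this routine once stated.
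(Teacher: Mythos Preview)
Your overall instinct---find a one-dimensional functional of the \textit{Ninja} that behaves like a martingale and read off the absorption probability via optional stopping---is the right one, and it is also what the paper does. However, two steps in your execution do not go through.

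First, the process $M_t=\textit{Ninja}_t+\#\{\text{labelled particles absorbed at }N\text{ by time }t\}$ is \emph{not} a martingale for $\hat{\mathcal L}^{\textit{Ninja}}$. Look at \textbf{Case 2} with a single labelled particle sitting at $\textit{Ninja}-1$: at rate $1$ that particle hops over the \textit{Ninja} (particle to $\textit{Ninja}+1$, \textit{Ninja} to $\textit{Ninja}-1$), so $\textit{Ninja}$ decreases by $1$ while your absorption counter is unchanged; the competing rate-$1$ move (particle to $\textit{Ninja}-2$) leaves $M$ untouched. Hence $\hat{\mathcal L}^{\textit{Ninja}}M<0$ there. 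The fix is to replace your counter by $\sum_{\ell=1}^k\mathbf 1_{\{X^{(\ell)}_t<\textit{Ninja}_t\}}$, the number of labelled particles currently to the \emph{left} of the \textit{Ninja}: this indicator flips by exactly $-1$ at the same swap, so the quantity $\textit{Ninja}_t-\sum_\ell\mathbf 1_{\{X^{(\ell)}_t<\textit{Ninja}_t\}}$ (equivalently the paper's $N-\textit{Ninja}_t+\sum_\ell\mathbf 1_{\{X^{(\ell)}_t<\textit{Ninja}_t\}}$) is pathwise preserved by every Case~2 and Case~3 transition.

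Second, even granting a bounded martingale $M_t$ for the \emph{unconditioned} process, the identity $\E[M_\infty\mathbf 1_E]=M_0\,\P(E)$ that you invoke is false in general, since $E\in\mathcal F_\infty$ is not $\mathcal F_0$-measurable (take $M$ a simple random walk absorbed at $\{0,N\}$ and $E=\{M_\infty=N\}$ for a counterexample). The paper circumvents this by working with the process \emph{conditioned} on $E$ (a Doob $h$-transform) and verifying that the corrected functional above has constant expectation for the conditioned dynamics. The only case that is not pathwise trivial is when the \textit{Ninja} is isolated and performs its own step: here one must check that conditioning on $E$ does not bias that step, and this is exactly where Corollary~\ref{corollary: useful results from Ninja}(2) enters---$\P^{\textit{Ninja}}_{x_1,\ldots,x_k,y}(E)$ does not depend on $y$, so the $h$-transform leaves the \textit{Ninja}'s free move symmetric. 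Your outline does not supply any substitute for this step.
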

\begin{proof}
We consider the skeleton chain
$$(X^{(1)}_n,\ldots, X^{(k)}_n, \textit{Ninja}_n)_{n\in\N}$$
since we are interested in computing absorption probabilities only and we define by 
$$(\bar X^{(1)}_n,\ldots, \bar X^{(k)}_n, \overline{\textit{Ninja}}_n)_{n\in\N}$$
the Ninja process conditioned on the event $E$.

In order to compute the conditional probability on the left hand side of \eqref{eq: conditional probability}, we introduce  the following auxiliary stochastic  process:

$$\bar M_n:=N-\overline{\textit{Ninja}}_n +\sum_{\ell=1}^k\1_{ \{\bar X^{(\ell)}_n  <\overline{\textit{Ninja}}_n\}}.$$

First notice that, $\P^{\textit{Ninja}}_{x_1,\ldots,x_k, x_{k+1}} $-a.s.

$$\lim_{n\to \infty}\bar M_n = N\ \one_{\{ \overline{\textit{Ninja}}_\infty=0\}}.$$

Because $\bar M_n\le N+k$ for each $n\in\N$, the dominated convergence theorem guarantees that

\begin{align}
\nonumber\lim_{n\to \infty} \E^{\textit{Ninja}}_{x_1,\ldots,x_k, x_{k+1}}  [\bar M_n]&= N\  \P^{\textit{Ninja}}_{x_1,\ldots,x_k, x_{k+1}}  (\overline{\textit{Ninja}}_\infty=0  )\\
&= N\  \P^{\textit{Ninja}}_{x_1,\ldots,x_k, x_{k+1}}  (  {\textit{Ninja}}_\infty=0  | E).
\end{align}
We conclude by showing that for all $n\in\N$
$$ \E^{\textit{Ninja}}_{x_1,\ldots,x_k, x_{k+1}}  [\bar M_n]=\bar M_0=N-x_{k+1}+k$$
from which the thesis follows.

It is enough to show that for all $x_1,\ldots,x_{k+1}$
$$\E^{\textit{Ninja}}_{x_1,\ldots,x_k, x_{k+1}}[\bar M_1]= N-x_{k+1}+k$$
since then, by the Markov property, we will show that the equality holds for all $n\in\N$.

For this purpose, we consider three different scenarios. The first one is when there exists exactly one $i$ such that $|x_i-x_{k+1}|=1$ or when $x_i=2$ and $x_{k+1}=0$ or when $x_i=N-2$ and $x_{k+1}=N$. In this case, if a particle with label $j\ne i$ jumps, the process will not change its value since each term in the sum composing $\bar M_1$ remains the same as in $\bar M_0$. If the $i$-th particle is at the left of the Ninja (the $k+1$ particle), i.e. $x_{k+1}-x_i=1$, then if it jumps to the left, no terms changes in $\bar M_1$, while if it jumps to the right interacting with the ninja then 

$$\overline{\textit{Ninja}}_1-\overline{\textit{Ninja}}_0=-1$$
and 
$$\1_{\{\bar X^{x_i}_1<\overline{\textit{Ninja}}_1\}}-\1_{\{\bar X^{x_i}_0<\overline{\textit{Ninja}}_0\}}=-1$$
and thus $\bar M_0=\bar M_1$ for any possible transition. Similarly, one can deduce the same conclusions when $x_i-x_{k+1}=1$, or when $x_i=2$ and $x_{k+1}=0$ or when $x_i=N-2$ and $x_{k+1}=N$. 
The second one consists in the case where there exists $i$ and $j$ such that $x_{k+1}-x_i=1$ and $x_{j}-x_{k+1}=1$; then the Ninja cannot move and all the other particles cannot jump across the Ninja and thus $\bar M_1=\bar M_0$.

Finally the third scenario consists in the case
$\min\{|x_i-x_{k+1}|,  \ i\in\{1,\ldots, k\}\} \ge 2$ with $x_{k+1}\notin\{0,N\}$.
In this case, no particle can jump across the \textit{Ninja} in one step and thus 
$$\sum_{i=1}^k\1_{\{\bar X^{(i)}_1<\overline{\textit{Ninja}}_1\}}=\sum_{i=1}^k\1_{\{\bar X^{(i)}_0<\overline{\textit{Ninja}}_0\}}.$$
 However, in this case the Ninja can move both to the left and to the right. We now show that in this third case,  despite the conditioning to the event $E$, 

$$\P^{\textit{Ninja}}_{x_1,\ldots,x_k, x_{k+1}}  (\overline {\textit{Ninja}}_1=x_{k+1}+1  )= \P^{\textit{Ninja}}_{x_1,\ldots,x_k, x_{k+1}}  (\overline {\textit{Ninja}}_1=x_{k+1}-1  )$$
from which we conclude that 
$$\E^{\textit{Ninja}}_{x_1,\ldots,x_k, x_{k+1}}[\bar M_1]=\bar M_0.$$
Recall that by definition of the \textit{Ninja}-process: the non-conditioned process satisfies 
$$\P^{\textit{Ninja}}_{x_1,\ldots,x_k, x_{k+1}}  ( {\textit{Ninja}}_1=x_{k+1}+1  )= \P^{\textit{Ninja}}_{x_1,\ldots,x_k, x_{k+1}}  ( {\textit{Ninja}}_1=x_{k+1}-1  )$$
since in this case, the \textit{Ninja} is performing a simple symmetric random walk. By Bayes theorem and the Markov property we have 

\begin{align*}
&\P^{\textit{Ninja}}_{x_1,\ldots,x_k, x_{k+1}}  (\overline {\textit{Ninja}}_1=x_{k+1}+1  )=\P^{\textit{Ninja}}_{x_1,\ldots,x_k, x_{k+1}}  ({\textit{Ninja}}_1=x_{k+1}+1  | E)\\
&= \frac{\P^{\textit{Ninja}}_{x_1,\ldots,x_k, x_{k+1}}  (E|{\textit{Ninja}}_1=x_{k+1}+1  )}{\P^{\textit{Ninja}}_{x_1,\ldots,x_k, x_{k+1}}  ( E)}\P^{\textit{Ninja}}_{x_1,\ldots,x_k, x_{k+1}}  ( {\textit{Ninja}}_1=x_{k+1}+1  )\\
&= \frac{\P^{\textit{Ninja}}_{x_1,\ldots,x_k, x_{k+1}+1}  (E)}{\P^{\textit{Ninja}}_{x_1,\ldots,x_k, x_{k+1}}  ( E)}\P^{\textit{Ninja}}_{x_1,\ldots,x_k, x_{k+1}}  ( {\textit{Ninja}}_1=x_{k+1}+1  ).
\end{align*}
By Corollary \ref{corollary: absorb prob on the right}, we have that 

$$\P^{\textit{Ninja}}_{x_1,\ldots,x_k, x_{k+1}+1}  (E)=\P^{\textit{Ninja}}_{x_1,\ldots,x_k, x_{k+1}}  ( E)$$
concluding that 
$$\P^{\textit{Ninja}}_{x_1,\ldots,x_k, x_{k+1}}  (\overline {\textit{Ninja}}_1=x_{k+1}+1  )=\P^{\textit{Ninja}}_{x_1,\ldots,x_k, x_{k+1}}  ( {\textit{Ninja}}_1=x_{k+1}+1  ).$$
The same arguments gives 
$$\P^{\textit{Ninja}}_{x_1,\ldots,x_k, x_{k+1}}  (\overline {\textit{Ninja}}_1=x_{k+1}-1  )=\P^{\textit{Ninja}}_{x_1,\ldots,x_k, x_{k+1}}  ( {\textit{Ninja}}_1=x_{k+1}-1  )$$
and thus 
$$\P^{\textit{Ninja}}_{x_1,\ldots,x_k, x_{k+1}}  (\overline {\textit{Ninja}}_1=x_{k+1}+1  )=P^{\textit{Ninja}}_{x_1,\ldots,x_k, x_{k+1}}  (\overline{\textit{Ninja}}_1=x_{k+1}-1  ).$$
Finally, denoting by $(\bar{\mathcal F}_{n})_{n\in\N}$ the natural filtration generated by the conditioned process $(\bar X^{(1)}_n,\ldots, \bar X^{(k)}_n, \overline{\textit{Ninja}}_n)_{n\in\N}$, we obtain, using the Markov property
\begin{align*}
&\E^{\textit{Ninja}}_{x_1,\ldots,x_k, x_{k+1}}[\bar M_n]=\E^{\textit{Ninja}}_{x_1,\ldots,x_k, x_{k+1}}[\E^{\textit{Ninja}}_{x_1,\ldots,x_k, x_{k+1}}[\bar M_n|\bar{\mathcal F}_{n-1}] ]\\
&=\E^{\textit{Ninja}}_{x_1,\ldots,x_k, x_{k+1}}[ \E^{\textit{Ninja}}_{X^{(1)}_{n-1},\ldots,X^{(k)}_{n-1}, Ninja^{x_{k+1}}_{n-1}}[\bar M_1]].
\end{align*}
The above term is equal to 
\begin{align*}
&\sum_{y_1,\ldots,y_{k+1}}\P^{\textit{Ninja}}_{x_1,\ldots,x_k, x_{k+1}}  (\bar X^{(1)}_{n-1}=y_1,\ldots, \bar X^{(k)}_{n-1}=y_k,\overline{\textit{Ninja}}_{n-1}=y_{k+1}  )\E^{\textit{Ninja}}_{y_1,\ldots,y_k, y_{k+1}}[\bar M_1]\\
&=\bar M_0\sum_{y_1,\ldots,y_{k+1}}\P^{\textit{Ninja}}_{x_1,\ldots,x_k, x_{k+1}}  (\bar X^{(1)}_{n-1}=y_1,\ldots, \bar X^{(k)}_{n-1}=y_k,\overline{\textit{Ninja}}_{n-1}=y_{k+1}  )=\bar M_0
\end{align*}
and the proof is concluded.
\end{proof}

\begin{footnotesize}
	\subsection*{Acknowledgments}
	S.F.\ acknowledges financial support from the Engineering and Physical Sciences Research Council of the United Kingdom through the EPSRC Early Career Fellowship EP/V027824/1.
	  S.F.\ and A.G.C.\ thank the  Hausdorff Institute for Mathematics (Bonn) for its hospitality during the Junior Trimester Program \textit{Stochastic modelling in life sciences} funded by the Deutsche Forschungsgemeinschaft (DFG, German Research Foundation) under Germany’s Excellence Strategy - EXC-2047/1 - 390685813. If the authors' staying at HIM has been so pleasant, productive and full of nice workshops and seminars is in great extent thanks to Silke Steinert-Berndt and Tomasz Dobrzeniecki: the authors are extremely grateful to them. The authors thank Simona Villa for making the pictures and  P. Gon{\c{c}}alves, M. Jara, F. Sau and G. Schütz for useful discussions and comments. S.F. thanks P. Gon{\c{c}}alves for her kind hospitality at the  Instituto Superior Técnico in Lisbon during June 2023. This project has received funding from the European Research Council (ERC) under the European Union’s Horizon 2020 research and innovative programme (grant agreement n. 715734). 
\end{footnotesize}

\end{document}